\theoremstyle{theorem}
\newtheorem{thm}{Theorem}
\newtheorem{cor}[thm]{Corollary}
\newtheorem{lem}[thm]{Lemma}
\newtheorem{prop}[thm]{Proposition}
\theoremstyle{definition}
\newtheorem{defn}[thm]{Definition}
\newtheorem{rmk}[thm]{Remark}
\newtheorem{cla}[thm]{Claim}
\newcommand{\N}{\mathbb{N}}
\newcommand{\Q}{\mathbb{Q}}
\newcommand{\R}{\mathbb{R}}
\newcommand{\Fc}{\mathcal{F}}
\newcommand{\Rc}{\mathcal{R}}
\newcommand{\Gc}{\mathcal{G}}
\renewcommand{\d}{\delta}
\newcommand{\e}{\epsilon}
\newcommand{\Id}{\operatorname{Id}}
\newcommand{\Bc}{\mathcal{B}}
\renewcommand{\Pr}{\text{Pr}}
\renewcommand{\tilde}{\widetilde}
\newcommand{\ds}{/ \!  /}
\newcommand{\id}{\operatorname{Id}}
\newcommand{\la}{\lambda}
\newcommand{\Ec}{\mathcal E}
\newcommand{\Gap}{\operatorname{Gap}}
\newcommand{\knum}{\mathsf x}
\begin{document}

\title[Dominated splittings in Banach spaces]{Characterization of dominated splittings for operator cocycles acting on Banach spaces}

\author{Alex Blumenthal$^1$}
\address{$^1$Courant Institute of Mathematical Sciences, New York University, 251 Street, New York, New York 10012, U.S.A.
}
\email{$^1$alex@cims.nyu.edu}
\author{Ian D. Morris$^2$}
\address{$^2$Mathematics department, University of Surrey, Guildford GU2 7XH, United Kingdom}
\email{$^2$i.morris@surrey.ac.uk}

\maketitle

\begin{abstract}
Versions of the Oseledets multiplicative ergodic theorem for cocycles acting on infinite-dimensional Banach spaces have been investigated since the pioneering work of Ruelle in 1982 and are a topic of continuing research interest. For a cocycle to induce a \emph{continuous} splitting in which the growth in one subbundle exponentially dominates the growth in another requires additional assumptions; a necessary and sufficient condition for the existence of such a dominated splitting was recently given by J. Bochi and N. Gourmelon for invertible finite-dimensional cocycles in discrete time. We extend this result to cocycles of injective bounded linear maps acting on Banach spaces (in both discrete and continuous time) using an essentially geometric approach based on a notion of approximate singular value decomposition in Banach spaces. Our method is constructive, and in the finite-dimensional case yields explicit growth estimates on the dominated splitting which may be of independent interest.\\\\
MSC2010: primary 37D30, 37H15; secondary 46B20.
\end{abstract}

%%%%NOTATION%%%%%

\section{Introduction}

The celebrated multiplicative ergodic theorem of Oseledets \cite{Os68} states that a cocycle of finite-dimensional linear maps acting on a linear bundle over a dynamical system -- for example, the derivative cocycle of a diffeomorphism, acting on the tangent bundle -- induces a measurable decomposition of the bundle into different subbundles each having a particular exponential growth rate. The further development  of this result --  weakening its hypotheses, shortening its proof, and extending its scope -- has continued in the literature in every decade since its publication (we note for example \cite{FrLlQu10,Ka87,Ka02,LiLu10,Ma83,Ra79,Ru82,Th97,Wa93}). In this article we are concerned with two modes of extension of Oseledets' result: firstly, its extension to infinite-dimensional cocycles; and secondly, the existence of a continuous, rather than just measurable, decomposition into subspaces with particular exponential growth rates.

The extension of Oseledets' theorem to cocycles of operators on infinite-dimensional spaces has been a problem of ongoing interest since the pioneering work of Ruelle in 1982 (see e.g. \cite{GoQu14a,LiLu10,Ma83,Ru82,Th92}), and has enjoyed a burst of recent attention motivated by applications including the study of transfer operators of random dynamical systems, stochastic partial  differential equations, and functional analysis (\cite{FrGoQu14,FrLlQu13,Mo12, flandoli1990stochastic}). By comparison the construction of continuous splittings has been a recent activity, even in low dimensions: the two-dimensional case was first published by Yoccoz in 2004 (\cite{Yo04}, see also \cite{DaFiLuYe14}) and the case of invertible real cocycles in finite dimension was subsequently investigated by J. Bochi and N. Gourmelon \cite{BoGo09}. In this article we establish some necessary and sufficient conditions for the existence of a dominated splitting -- that is, a continuous splitting in which the growth in one subbundle exponentially dominates the growth in another -- for cocycles of bounded injective operators acting on a Banach space in discrete or continuous time.

Let us briefly recall the content of Oseledets' theorem. Suppose that $T \colon X \to X$ is an ergodic invertible measurable transformation acting on a probability space $(X,\mathcal{F},\mu)$, and $V$ a real or complex vector space having finite dimension $d$, say. A function $A \colon X \times \mathbb{N} \to GL(V)$, which we will denote by $(x,n) \mapsto A_x^n$,  is called a \emph{cocycle} over $T$ if it has the property that $A^{n+m}_x=A^n_{T^mx}A^m_x$ for every $x \in X$ and $n,m \geq 0$. Subject to suitable integrability assumptions, Oseledets' theorem asserts that there exist real numbers $\lambda_1,\ldots,\lambda_k$ and measurable functions $\Ec_1,\ldots,\Ec_k$ from $X$ to the Grassmannian manifold of $V$ such that $V=\Ec_1(x)\oplus \cdots \oplus \Ec_k(x)$, and $\lim_{n \to \infty} \frac{1}{n}\log|A^n_xv|=\lambda_k$ for every nonzero vector $v \in \Ec_k(x)$, for $\mu$-almost every $x \in X$. The splitting $V=\Ec_1\oplus\cdots \oplus \Ec_k$ has the further property of being \emph{equivariant}: we have $A^n_x\Ec_k(x)=\Ec_k(T^nx)$ for almost every $x \in X$ and for every $n \geq 0$.

If $X$ is additionally a topological space (typically, a compact metric space) and $T$ a homeomorphism, we might further ask if the equivariant splitting $V=\Ec_1\oplus \cdots \oplus \Ec_k$ can be chosen to be continuous. However, if $T$ admits more than one invariant probability measure then the Lyapunov exponents $\lambda_i$ will in general be different with respect to different probability measures, and as a consequence the dimensions of the subspaces $\Ec_i(x)$, though constant almost everywhere for each ergodic measure, can fail to be the same for every $x \in X$. It is furthermore not difficult to construct examples in which the Lyapunov exponents $\lim_{n \to \infty}\frac{1}{n}\log |A^n_xv|$ fail to exist for certain $x \in X, v \in V \setminus \{0\}$ (even if $V$ is one-dimensional), or examples in which the Lyapunov exponents are all distinct and the Oseledets subspaces have constant dimension,  but in which continuous splittings cannot exist because the convergence of $\frac{1}{n}\log|A^n_x|$ is not uniform (see e.g. \cite{Fu97}). In view of these considerations the continuous analogue of Oseledets splittings which we investigate in this article is the \emph{dominated splitting}. A dominated splitting describes a situation in which $V$ admits a continuous equivariant splitting $V=\Ec(x) \oplus \Fc(x)$ in such a way that while the exponential growth rates of vectors in $\Ec(x)$ and $\Fc(x)$ are not necessarily constant with respect to $x$ (or even well-defined at all $x \in X$) each vector in $\Ec(x)$ nonetheless grows exponentially \emph{faster} under the action of $A^n_x$ than every vector in $\Fc(x)$.

Let us define this concept formally. Let $X$ be a topological space, $T \colon X \to X$ a homeomorphism, $(V,|\cdot|)$ a Banach space, and $L(V)$ the space of bounded linear operators on $V$. We let $\Gc(V)$ denote the Grassmannian of closed complemented subspaces of $V$ equipped with a certain natural topology which will be defined in the following section. A function $A \colon X \times \mathbb{N} \to L(V)$, denoted $(x,n)\mapsto A_x^n$, will be called a \emph{linear cocycle} if $A_x^{n+m}=A_{T^mx}^nA_x^m$ for every $x \in X$ and $n,m \geq 0$, and if additionally $A_x^n \equiv \mathrm{Id}_V$. 
We say that $A$ admits a \emph{dominated splitting} if there exist continuous functions $\Ec,\Fc \colon X \to \Gc(V)$ and constants $C>0$, $\tau \in (0,1)$ such that $V=\Ec(x)\oplus \Fc(x)$ for every $x \in X$, $A^n_x\Ec(x)=\Ec(T^nx)$ and $A^n_x\Fc(x)\subseteq \Fc(T^nx)$ for every $x \in X$ and $n \geq 1$, and additionally
\[\sup_{\substack{u \in \Ec(x), v \in \Fc(x)\\|u|=|v|=1}}\frac{|A^n_x v|}{|A^n_x u|}\leq C\tau^n \]
for every $x \in X$ and $n \geq 1$.  For finite-dimensional $V$, the notion of a dominated splitting (particularly for derivative cocycles of diffeomorphisms on manifolds) originates in its applications to smooth ergodic theory (\cite{Li80,Ma82,Pl72}) and continues to have profound applications (see e.g. \cite{BoVi05,GaWe06,PuSa09}). However, the general problem of constructing dominated splittings appears to have received relatively little attention. Recall that the singular values of a $d \times d$ real or complex matrix $A$ are the non-negative square roots of the eigenvalues of the positive semidefinite matrix $A^*A$, and are denoted $\sigma_1(A),\ldots,\sigma_d(A)$ in decreasing order, with repetition in the case of multiplicity. The following characterization of finite-dimensional dominated splittings was recently proved by J. Bochi and N. Gourmelon \cite{BoGo09}:
\begin{thm}[Bochi-Gourmelon]\label{thm:bogo}
Let $T \colon X \to X$ be a homeomorphism of a compact Hausdorff space and $A \colon X \times \mathbb{N} \to GL_d(\R)$ a continuous linear cocycle. Then the following are equivalent:
\begin{itemize}
\item
There exist $K>0$, $\tau \in (0,1)$ such that 
\begin{equation}\label{eq:bogo}\sigma_{k+1}(A^n_x)\leq K\tau^n \sigma_k(A^n_x)\end{equation}
for every $x \in X$ and $n \geq 0$;
\item
There exists a dominated splitting $\R^d=\Ec(x) \oplus \Fc(x)$ such that $\dim \Ec(x) = k$ for all $x \in X$.
\end{itemize}
\end{thm}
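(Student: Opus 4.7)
The plan is to prove the two directions separately. For the easy direction $(2)\Rightarrow(1)$: compactness of $X$ and continuity of the splitting force the angle between $\Ec(x)$ and $\Fc(x)$ to be bounded below by some $\theta>0$. Choosing $S=\Ec(x)$ in the minimax formula $\sigma_{k+1}(A)=\min_{\dim S=k}\max_{v\perp S,|v|=1}|Av|$ gives $\sigma_{k+1}(A^n_x)\leq C_1(\theta)\|A^n_x|_{\Fc(x)}\|$; a dual argument gives $\sigma_k(A^n_x)\geq C_2(\theta)\inf_{u\in\Ec(x),|u|=1}|A^n_x u|$. Dividing and invoking the domination hypothesis immediately yields \eqref{eq:bogo}.

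For the harder direction $(1)\Rightarrow(2)$, my strategy is to construct $\Fc(x)$ and $\Ec(x)$ as limits of subspaces extracted from the singular value decomposition. Define $S_n(x)\subset\R^d$ to be the span of the bottom $d-k$ right singular vectors of $A^n_x$, i.e.\ the unique $(d-k)$-dimensional subspace on which $A^n_x$ has norm $\sigma_{k+1}(A^n_x)$. The key step is that $\{S_n(x)\}_n$ is Cauchy in the Grassmannian, uniformly in $x$. I would prove this by a one-step estimate: for a unit vector $v\in S_{n+1}(x)$, decompose orthogonally $v=v_E+v_F$ with $v_E$ in the top $k$ right singular subspace of $A^n_x$ and $v_F\in S_n(x)$. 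SVD orthogonality of $A^n_x$ gives $|A^n_x v|^2\geq\sigma_k(A^n_x)^2|v_E|^2$; combining with $|A^{n+1}_x v|\leq\sigma_{k+1}(A^{n+1}_x)$, the cocycle factorization $A^{n+1}_x=A^1_{T^nx}A^n_x$, the standard inequality $\sigma_k(A^{n+1}_x)\leq\|A^1_{T^nx}\|\sigma_k(A^n_x)$, and \eqref{eq:bogo} yields $|v_E|\leq K\tau^{n+1}\|A^1_{T^nx}\|\cdot\|(A^1_{T^nx})^{-1}\|\leq C\tau^n$, uniform in $x$ by continuity of $A$ and compactness of $X$. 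Summing the geometric bound gives the Cauchy property, so I define $\Fc(x):=\lim_n S_n(x)$, a continuous $(d-k)$-subbundle. The fast subbundle $\Ec(x):=\lim_n U_n(x)$, where $U_n(x)$ is the span of the top $k$ left singular vectors of $A^n_{T^{-n}x}$, is constructed analogously using the analogous gap at index $d-k$ for the inverse cocycle. Equivariance then follows by showing $A^1_x S_{n+1}(x)$ is $O(\tau^n)$-close to $S_n(Tx)$ via the variational characterization of $S_n$ and passing to the limit; similarly for $\Ec$ by time reversal.

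The main obstacle is \emph{transversality}: establishing $\Ec(x)\oplus\Fc(x)=\R^d$ with a uniform lower bound on the angle between the subbundles. My strategy is a quantitative growth-rate argument. For a putative nonzero $v\in\Ec(x)\cap\Fc(x)$, backward invariance of $\Fc$ (automatic in the invertible setting) gives $w:=(A^n_{T^{-n}x})^{-1}v\in\Fc(T^{-n}x)$; the construction of $\Ec$ via top left singular vectors yields $|w|\leq C|v|/\sigma_k(A^n_{T^{-n}x})$; and a growth bound on $\Fc$ derived from its Grassmannian closeness to $S_n$ together with the SVD decomposition of $A^n_{T^{-n}x}$ controls $|v|=|A^n_{T^{-n}x}w|$ by a quantity comparable to $\sigma_{k+1}(A^n_{T^{-n}x})|w|$, modulo correction terms accounting for the component of $w$ in the top singular subspace. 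Invoking \eqref{eq:bogo} forces $v=0$ for $n$ large, and continuity of $\Ec,\Fc$ together with finite-dimensionality promote pointwise transversality to a uniform angle bound. The required exponential domination estimate on the splitting then follows by combining the refined growth rates on each subbundle with \eqref{eq:bogo}. The delicate balance in the transversality argument --- matching the $\tau^n$ Grassmannian closeness of $\Fc$ to $S_n$ against the possibly large spread of singular values within the top $k$ --- is the principal technical difficulty of this direction.
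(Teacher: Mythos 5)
Your easy direction and the construction of the upper bundle are sound and essentially match the paper: $\Ec(x)$ as the limit of the images of the top-$k$ singular subspaces of $A^n_{T^{-n}x}$, with the geometric Cauchy estimate coming from the cocycle factorization and \eqref{eq:bogo}, is exactly the route taken here (and the analogous Cauchy estimate for the bottom right singular subspaces $S_n(x)$ appears as Claim \ref{cla:cheatComp}). The genuine gap is in your transversality/domination step. You assert that ``the construction of $\Ec$ via top left singular vectors yields $|w|\leq C|v|/\sigma_k(A^n_{T^{-n}x})$'' for $w=(A^n_{T^{-n}x})^{-1}v$, $v\in\Ec(x)$; equivalently, $m(A^n_y|_{\Ec(y)})\geq C^{-1}\sigma_k(A^n_y)$ uniformly. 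This does not follow from the $O(\tau^n)$ Grassmannian closeness of $\Ec(x)$ to the top left singular subspace: writing $v=u+e$ with $u$ in that subspace and $|e|\lesssim\tau^n$, the pullback of the error is only bounded by $|e|/\sigma_d(A^n_{T^{-n}x})$, and the ratio $\tau^n\sigma_k(A^n)/\sigma_d(A^n)$ is completely uncontrolled by \eqref{eq:bogo}, which constrains only the gap $\sigma_{k+1}/\sigma_k$. The same amplification defeats your companion estimate on $\Fc$: for unit $w\in\Fc(y)$ at distance $O(\tau^n)$ from $S_n(y)$, the error term in $|A^n_yw|$ is of size $\tau^n\sigma_1(A^n_y)$, and $\tau^n\sigma_1/\sigma_{k+1}$ need not be bounded. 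So the two inequalities on which your intersection argument (and the final domination estimate) rest are precisely the statements that do not come for free from Grassmannian convergence.

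This is exactly where the paper spends its main technical effort, and where your route differs from it. Rather than proving transversality of two independently constructed bundles, the paper first proves a volume-growth lower bound, Proposition \ref{prop:distEst} (finite-dimensional version: Lemma \ref{lem:posFracVolGrowFD}): $\det(A^n_x|\Ec(x))\geq r_\Ec V_k(A^n_x)$ uniformly, obtained by telescoping the determinant along the orbit and using a Lipschitz estimate for $E\mapsto\log\det(A|E)$ (Lemma \ref{lem:detReg}, resp.\ \ref{lem:lipRegFD}), so that the $O(\tau^q)$ subspace errors enter multiplicatively as $1+O(\tau^q)$ per step and sum, instead of additively with condition-number amplification. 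From this one gets $m(A^n_x|_{\Ec(x)})\gtrsim\sigma_k(A^n_x)$ (Corollary \ref{cor:ckRealized}), and then Proposition \ref{prop:genSVD} (resp.\ \ref{prop:genSVDHilbert}) applied to $E=\Ec(x)$ produces complements $\hat F_n(x)$ with $|\pi_{\Ec(x)\ds\hat F_n(x)}|$ uniformly bounded and $|A^n_x|_{\hat F_n(x)}|\lesssim\sigma_{k+1}(A^n_x)$; the lower bundle is the limit of these, so uniform transversality and domination are built in rather than proved afterwards. To close your argument you would need either this volume/determinant mechanism or a nonconstructive compactness/ergodic-theoretic argument in the style of Bochi--Gourmelon; as written, the key quantitative step is missing.
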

Results of this type have since found powerful applications to problems in matrix analysis (\cite{BoMo15,Mo10}) in which there is no clear a priori reason why the matrices $A^n_x$ should need to be invertible. It is thus of interest to ask whether Theorem \ref{thm:bogo} might be generalized to the case of cocycles of non-invertible matrices. Moreover, the infinite-dimensional extensions of Oseledets' theorem described earlier include numerous examples in which invertibility of the matrices or operators $A^n_x$ is not required. It is therefore both natural and desirable to attempt the extension of Theorem \ref{thm:bogo} to infinite-dimensional settings and to the context of non-invertible linear operators.

The concept of singular value defined above may be extended to infinite-dimensional Hilbert spaces in a relatively direct fashion, but in the context of general Banach spaces there is no precise analogue of the notion of singular value: instead a family of different generalizations exists, with such generalizations being referred to generically as \emph{$s$-numbers}. In this article we principally use one specific generalization, namely \emph{Gelfand numbers}: if $A$ is a bounded linear operator acting on a Banach space $(V,|\cdot|)$, then for each integer $k \geq 1$ we define the $k^{\mathrm{th}}$ Gelfand number of $A$ to be the quantity
\[c_k(A):= \inf\{|A|_F|\colon F\text{ is a closed }(k-1)\text{-dimensional subspace of }V\}. \] 
When $(V,|\cdot|)$ is a Hilbert space the Gelfand numbers of $A$ coincide precisely with its singular values. Our first main result generalizes Theorem \ref{thm:bogo} to injective, but not necessarily invertible, linear cocycles acting on Banach spaces:
\begin{thm}\label{thm:main}
Let $T$ be a homeomorphism of a compact topological space $X$ and let $A \colon X \times \mathbb{N} \to L(V)$ denote a cocycle of injective bounded linear operators acting on a Banach space $(V,|\cdot|)$. Then the following statements are equivalent:
\begin{itemize}
\item
There exist constants $K>0$ and $\tau \in (0,1)$ such that
\begin{equation}\label{eq:magic} \max\{c_{k+1}(A_x^n),c_{k+1}(A_{Tx}^n))\} < K\tau^n c_{k}(A_x^{n + 1})\end{equation}
for all $x \in X$ and $n \geq 1$;
\item
There exist a continuous equivariant splitting $V= \Ec (x) \oplus \Fc(x)$, for which $\dim \Ec(x) = k < \infty$ for all $x \in X$, and constants $K>0$, $\tau \in (0,1)$ such that 
\[\sup_{\substack{u \in \Ec(x), \,v \in \Fc(x)\\ |u|=|v|=1}} \frac{|A_x^n v |}{| A_x^n u| } \leq K\tau^n\]
for every $x \in X$ and $n \geq 1$.
\end{itemize}
\end{thm}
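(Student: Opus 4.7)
I argue directly from the definition of the Gelfand numbers. Since $\Fc(x)$ is closed and has codimension $k$, it is admissible in the infimum defining $c_{k+1}(A_x^n)$, giving $c_{k+1}(A_x^n) \leq \|A_x^n|_{\Fc(x)}\|$. A dimension count shows that every closed $F \subseteq V$ with $\codim F < k$ must intersect the $k$-dimensional $\Ec(x)$ in at least a line, so $c_k(A_x^{n+1}) \geq \inf\{|A_x^{n+1}u| : u \in \Ec(x),\, |u|=1\}$. Combining the domination hypothesis, the factorisation $A_x^{n+1} = A_{T^n x} \circ A_x^n$, and the uniform lower bound $\inf\{|A_x u| : u \in \Ec(x),\, |u|=1\} \geq c_0 > 0$ --- a consequence of fibrewise injectivity, continuity of $A$, and compactness of $X$ --- yields the required gap for both $c_{k+1}(A_x^n)$ and $c_{k+1}(A_{Tx}^n)$.

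\textbf{The direction (i) $\Rightarrow$ (ii): the fast bundle via approximate SVD.} The plan is to construct $\Ec(x)$ and $\Fc(x)$ as limits of quantitatively controlled \emph{approximate singular value decompositions} (ASVDs) of the operators $A_x^n$. Given the Gelfand gap, I take a near-extremiser $F_n(x)$ of the infimum defining $c_{k+1}(A_x^n)$, namely a closed codim-$k$ subspace with $\|A_x^n|_{F_n(x)}\| \leq 2 c_{k+1}(A_x^n)$, paired with a $k$-dimensional complement $E_n(x)$ on which $\inf\{|A_x^n u| : u \in E_n(x),\,|u|=1\}$ is comparable to $c_k(A_x^n)$. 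Without orthogonality this ASVD is not canonical, but its ambiguity is controlled by the ratio $c_{k+1}/c_k$. The particular form of hypothesis (i) now plays its role: the factorisation $A_x^{n+1} = A_{Tx}^n \circ A_x$ together with the gap $c_{k+1}(A_{Tx}^n) \leq K\tau^n c_k(A_x^{n+1})$ force any approximate fast subspace of $A_x^{n+1}$, pushed forward by $A_x$, to agree with $E_n(Tx)$ to within $O(\tau^n)$ in the Grassmannian $\Gc(V)$. Using injectivity of $A_x$ this converts into $\dist_{\Gc(V)}(E_{n+1}(x),\, A_x^{-1}E_n(Tx)) = O(\tau^n)$, producing a geometric Cauchy sequence. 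Its limit $\Ec(x) := \lim_n E_n(x)$ is continuous by uniform convergence over the compact base $X$, has constant dimension $k$, and is equivariant by construction.

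\textbf{The slow bundle and the main obstacle.} Constructing the codim-$k$ complement $\Fc(x)$ is the principal difficulty, since non-invertibility rules out a direct backward-time analogue of the fast-bundle argument. My plan is to pass to the adjoint cocycle $A^{*}$ on $V^{*}$: the Gelfand numbers of $A_x^n$ dualise to the Kolmogorov numbers of $(A^{*})_x^n$, which in the presence of an exponential gap are comparable enough that the adjoint cocycle inherits a dual version of hypothesis (i). Applying the fast-bundle construction of the previous paragraph to $A^{*}$ produces a continuous equivariant $k$-dimensional subbundle $\Ec^{*}(x) \subseteq V^{*}$, whose fibrewise annihilator defines $\Fc(x) \subseteq V$: a continuously-varying closed subbundle of codimension $k$, transverse to $\Ec(x)$ because the primal and adjoint ASVDs are compatible, and forward-equivariant by duality. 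Alternatively, one may argue directly in $V$ that the $F_n(x)$ themselves form a Cauchy sequence in $\Gc(V)$ using an inclusion $A_x F_{n+1}(x) \subseteq F_n(Tx)$ that is valid up to controlled error. The pointwise domination $|A_x^n v|/|A_x^n u| \leq K\tau^n$ is then read off the ASVD bounds together with hypothesis (i). I expect the technical crux to be uniform-in-$n$ quantitative control of the ASVD --- so that the Cauchy rate matches the gap rate $\tau^n$ without logarithmic loss --- and the verification of continuity of $\Fc$ in the intrinsic Grassmannian topology on $\Gc(V)$.
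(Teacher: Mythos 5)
Your sketch of the easy direction (ii) $\Rightarrow$ (i) is essentially correct and close to the paper's argument (your intersection count replaces the paper's detour through Kolmogorov numbers). The hard direction, however, contains a genuine error at its very first step: you construct the fast bundle as $\Ec(x)=\lim_{n\to\infty}E_n(x)$, where $E_n(x)$ is an (approximate) top-$k$ \emph{domain} subspace of $A^n_x$. That is the wrong limiting object. Even in the invertible Hilbert-space case, the domain top-$k$ singular spaces of $A^n_x$ converge (when they converge) to a complement of the slow bundle, not to the fast bundle, and the limit is not equivariant. Your key estimate --- that $A_xE_{n+1}(x)$ agrees with $E_n(Tx)$ to within $O(\tau^n)$ --- fails: for a single operator $B$ with a large singular gap and a fixed invertible $A$, the top right-singular direction of $BA$ is approximately the direction of $A^*e$, where $e$ is the top right-singular direction of $B$, so its image under $A$ lies near $AA^*e$, which differs from $e$ by a fixed angle that does not shrink with the gap. (The compatible-under-pushforward objects are the \emph{slow} domain spaces $F_n$, which is why they, and not the $E_n$, have a limit along the forward orbit.) The correct fast bundle is obtained from \emph{images along the backward orbit}, $\Ec(x)=\lim_{n\to\infty}A^n_{T^{-n}x}E_n(T^{-n}x)$, which is what the paper does; pushing forward contracts errors toward the fast directions, whereas your proposed preimage step $A_x^{-1}E_n(Tx)$ is both possibly undefined (the operators $A_x$ are injective but not surjective, and no uniform lower bound on $m(A_x)$ is assumed) and can expand Grassmannian distances uncontrollably.

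The slow-bundle part of your plan also has unresolved obstructions. The adjoint of an injective operator need not be injective, and the adjoint cocycle composes in reverse order (it is a cocycle over $T^{-1}$), so you cannot simply invoke the fast-bundle construction for $A^*$; the compatibility and transversality of the ``primal and adjoint ASVDs'' is exactly the kind of statement that needs proof. More importantly, the technical core of the paper's argument is absent from your outline: after building $\Ec$ as a limit, one must show that $\Ec(x)$ still captures a uniform positive proportion of the maximal $k$-dimensional volume growth of every $A^n_x$ (this uses injectivity, continuity of $\Ec$, compactness, and a Lipschitz estimate for determinants on the Grassmannian), and only then can one apply a generalized singular value splitting \emph{relative to the fixed subspace} $\Ec(x)$ to produce complements $\hat F_n(x)$ with uniformly bounded projections, whose limit is $\Fc(x)$ with the required equivariance, domination and continuity. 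Simply taking near-minimizers $F_n(x)$ of the infimum defining $c_{k+1}(A^n_x)$ gives no control on how they are complemented to $\Ec(x)$, nor any approximate equivariance, so the ``alternative'' direct route you mention does not close the gap either.
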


It was observed in \cite{BoMo15} that the condition \eqref{eq:bogo} is not sufficient to obtain the existence of a dominated splitting for non-invertible cocycles; instead we require the condition \eqref{eq:magic} to guarantee the existence of continuous upper and lower subbundles. The problem of establishing that $A^n_x$ is injective on the upper subbundle $\Ec$  seems to be more challenging, and it is this consideration which necessitates our hypothesis that $A^n_x$ be injective on $V$. We also draw the reader's attention to the fact that if every $A^n_x$ is invertible, then \eqref{eq:magic} is equivalent to the simpler condition $c_{k+1}(A^n_x)\leq K\tau^nc_k(A^n_x)$, which in the Hilbert space case is precisely Bochi and Gourmelon's condition \eqref{eq:bogo}.

We note as well that the proof of Theorem \ref{thm:main} carries over to the case when the cocycle $A$ acts on a continuous Banach bundle $\pi : E \to X$ (see \cite{lang1972differentiable} for a precise definition); in this terminology Theorem \ref{thm:main} is formulated for the trivial Banach bundle $E = X \times V$. The extension to general Banach bundles is a straightforward consequence of the methods given in this paper: all the arguments in the proof of Theorem \ref{thm:main} are `local' in $x$, and thus carry over to the bundle case by the local trivialization property. Details are left to the reader.

\medskip

Once Theorem \ref{thm:main} is established, it is a simple matter to derive a corresponding result for flows. Suppose that $(\phi^t)_{t \in \mathbb{R}}$ is a continuous flow on a topological space $X$; that is, suppose that $\{\phi^t\colon t \in \mathbb{R}\}$ is a set of homeomorphisms of $X$ such that $\phi^{t+s}=\phi^t\circ\phi^s$ for every $s,t \in \R$, and such that $t \mapsto \phi^tx$ is continuous for every $x \in X$. We shall say that a \emph{cocycle} over $(\phi^t)$ is a function $B \colon X \times [0,+\infty) \to L(V)$ such that $B^{s+t}_x=B_{\phi^tx}^sB_x^t$ for every $x \in X$ and $s,t \geq 0$. We obtain the following continuous-time analogue of Theorem \ref{thm:main}:
\begin{thm}\label{thm:continuousTime}
Let $(\phi^t)_{t \in \mathbb{R}}$ be a continuous flow on a compact topological space $X$ and let $B : X \times [0,+\infty) \to L(V)$ be a cocycle over $(\phi^t)$ such that $B_x^t$ is injective for every $x \in X$ and $t \geq 0$. We assume that
\begin{align}\label{eq:continuityHypotheses}\begin{split}
x & \mapsto B_x^t \qquad \text{ is continuous in the operator norm topology for each fixed $t \geq 0$, and}  \\
(x,t) & \mapsto B_x^t \qquad \text{ is continuous in the strong operator topology.}
\end{split}\end{align}
Then the following are equivalent:
\begin{itemize}
\item[(a)]
There exist $C,\gamma>0$ such that for any $x \in X, t \geq 0$,
\begin{align}\label{eq:continuousTime}
\sup_{0 \leq \e \leq 1} c_{k+1}(B_{\phi^{\e} x}^t) \leq Ce^{-\gamma t}  c_k(B_x^{t+1}) \, .
\end{align}
\item[(b)]
There exist a continuous equivariant splitting $V=\Ec(x)\oplus \Fc(x)$, for which $\dim \Ec(x) = k < \infty$ for all $x \in X$, and constants $C,\gamma>0$ such that for all $t\geq 0$,
\begin{align}\label{eq:continuousDS}
\sup_{\substack{u \in \Ec(x), v\in \Fc(x)\\ |u|=|v|=1}}\frac{|B_x^tv|}{|B_x^tu|}\leq Ce^{-\gamma t} \, .
\end{align}
\end{itemize} 
\end{thm}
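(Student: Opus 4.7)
The approach is to reduce to Theorem~\ref{thm:main} applied to the discrete time-$1$ cocycle $A_x^n := B_x^n$ over $T := \phi^1$, and then upgrade both the resulting splitting and the domination bound to continuous time.

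For (a)$\Rightarrow$(b), specializing \eqref{eq:continuousTime} to $t = n$ and $\e \in \{0,1\}$ verifies \eqref{eq:magic} for $A$, so Theorem~\ref{thm:main} produces a continuous equivariant $k$-dimensional splitting $V = \Ec(x) \oplus \Fc(x)$ dominated for $A$ with constants $K > 0, \tau \in (0,1)$. Two uniform estimates will be used throughout: (i) by SOT-continuity of $(x,t)\mapsto B_x^t$ on the compact set $X\times[0,1]$ and the uniform boundedness principle, $M := \sup\{|B_x^r| : x\in X,\, r\in[0,1]\} < \infty$; (ii) by continuity of the finite-dimensional bundle $\Ec$ together with injectivity of each $B_x^r$, a compactness argument yields $c_0 := \inf\{|B_x^r u| : x\in X,\, r\in[0,1],\, u\in\Ec(x),\, |u|=1\} > 0$.

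The central obstacle is promoting the $\phi^1$-equivariance of $\Ec, \Fc$ to equivariance under the full flow. For each integer $q\geq 1$, I apply Theorem~\ref{thm:main} to the discrete cocycle $n \mapsto B_x^{n/q}$ over $\phi^{1/q}$; its hypothesis \eqref{eq:magic} follows from \eqref{eq:continuousTime} at $\e \in \{0, 1/q\}$ together with the submultiplicativity $c_k(FG) \leq |F|\,c_k(G)$ applied to $B_x^{n/q + 1} = B_{\phi^{(n+1)/q}x}^{(q-1)/q}\circ B_x^{(n+1)/q}$ (the leading factor has norm at most $M$). The resulting continuous $\phi^{1/q}$-equivariant $k$-dimensional splitting $(\Ec_q,\Fc_q)$ is, upon taking every $q$-th iterate, also a dominated splitting of dimension $k$ for $A$, so the standard uniqueness of dominated splittings of prescribed dimension forces $\Ec_q = \Ec$ and $\Fc_q = \Fc$. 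Hence $\Ec, \Fc$ are $B^s$-equivariant for every rational $s\geq 0$; extension to arbitrary $s\geq 0$ follows by rational approximation, using SOT-continuity of $B_x^{\cdot}$ applied to a basis of the finite-dimensional $\Ec(x)$ together with norm-continuity in $y$ of the projections onto $\Ec(y), \Fc(y)$. Finally, to obtain \eqref{eq:continuousDS}, for $t = n + r$ with $n\in\N, r\in[0,1)$, decompose $B_x^t = B_{\phi^r x}^n \circ B_x^r$: equivariance sends unit $u\in\Ec(x), v\in\Fc(x)$ into $\Ec(\phi^r x)$ (with norm $\geq c_0$) and $\Fc(\phi^r x)$ (with norm $\leq M$), so the discrete domination for $A$ applied to the normalized images yields $|B_x^t v|/|B_x^t u| \leq (MK/c_0)\tau^n$, which is \eqref{eq:continuousDS} with $\gamma := -\log\tau$.

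For (b)$\Rightarrow$(a), analogous uniform bounds $M, c_0$ are obtained directly from (b). For any $x, t, \e$, the Gelfand-number bound $c_{k+1}(B_{\phi^\e x}^t) \leq |B_{\phi^\e x}^t|_{\Fc(\phi^\e x)}|$ (via the codimension-$k$ subspace $\Fc(\phi^\e x)$) combines with the lower bound $c_k(B_x^{t+1}) \geq c_0^2 \inf\{|B_{\phi^\e x}^t u'| : u' \in \Ec(\phi^\e x),\, |u'|=1\}$ -- obtained by propagating a unit $u \in \Ec(x)$ forward through times $\e$ and $1-\e$, using that $B_x^\e|_{\Ec(x)} : \Ec(x)\to\Ec(\phi^\e x)$ is a bijection along with two applications of $c_0$ -- and \eqref{eq:continuousDS} at basepoint $\phi^\e x$ to yield $c_{k+1}(B_{\phi^\e x}^t) \leq (C/c_0^2) e^{-\gamma t} c_k(B_x^{t+1})$, uniformly in $\e \in [0,1]$, which is (a).
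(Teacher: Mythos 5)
Your overall architecture is the same as the paper's: discretize at times $1/q$, apply Theorem \ref{thm:main} (in effect its `hard' half, Proposition \ref{prop:hardDirection}) to each cocycle $n \mapsto B_x^{n/q}$ over $\phi^{1/q}$, identify all the resulting splittings with the time-one splitting, upgrade equivariance from rational to arbitrary times via strong continuity, and obtain \eqref{eq:continuousDS} and the converse implication from uniform bounds $M$ and $c_0$ over the time interval $[0,1]$ (your verification of \eqref{eq:magic} for the time-$1/q$ cocycle and your (b)$\Rightarrow$(a) argument are correct; the latter is even marginally cleaner than the paper's, since you use directly that $c_k(A) \geq m(A|_E)$ for any $k$-dimensional $E$ rather than passing through Kolmogorov numbers).

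The gap is the identification step. You justify $\Ec_q = \Ec$, $\Fc_q = \Fc$ by appealing to ``the standard uniqueness of dominated splittings of prescribed dimension.'' In the setting of this theorem -- injective but non-invertible operators on a Banach space, with $\Fc$ only forward-invariant in the sense $A_x \Fc(x) \subseteq \Fc(Tx)$, and with no inverse or adjoint cocycle to iterate backwards on the lower bundle -- such a uniqueness statement is not an off-the-shelf fact, and it is exactly where the paper has to do new work: Lemma \ref{lem:uniquesplit} is proved for splittings satisfying the Gelfand-number realization bounds $m(A^n_x|_{\Ec_i(x)}) \geq K^{-1} c_k(A^n_x)$ and $|A^n_x|_{\Fc_i(x)}| \leq K c_{k+1}(A^n_x)$, not for splittings that are merely dominated, and its proof (in particular the inclusion $\Ec_1 \subseteq \Ec_2$) genuinely uses these bounds; if you try to run the argument from domination alone you are left needing to compare $m(A^n|_{\Ec_1})$ with $m(A^n|_{\Ec_2})$, which does not follow formally. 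The repair is available within your own framework, but it has to be said: the splittings you are comparing are produced by Proposition \ref{prop:hardDirection}, hence satisfy \eqref{eq:realizeCK} for the time-$1/q$ cocycle, and therefore (sampling at multiples of $q$, since $B_x^{qn/q} = B_x^n$) the realization bounds for the time-one cocycle; with those bounds in hand one still has to prove a uniqueness lemma of the type of Lemma \ref{lem:uniquesplit}. As written, the word ``standard'' conceals the one genuinely new ingredient this implication requires.
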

We note that if $B_x^t$ is assumed invertible for every $x \in X$ and $t \geq 0$, then similarly to Theorem \ref{thm:main} the condition \eqref{eq:continuousTime} may be replaced with the simpler hypothesis $c_{k+1}(B_x^t)\leq C e^{- \gamma t} c_k(B_x^t)$. As is the case with Theorem \ref{thm:main}, an analogous statement holds when the cocycle $B$ acts on a continuous Banach bundle.

\medskip

The methods which we use to prove Theorem \ref{thm:main} also yield new information even in the invertible, finite-dimensional context of Theorem \ref{thm:bogo}. Specifically, the proof of Theorem \ref{thm:bogo} by Bochi and Gourmelon is nonconstructive, relying heavily on ergodic theory and compactness both to construct the equivariant splitting and to show that it is dominated; in particular, Bochi and Gourmelon's statement does not yield an explicit relationship between the constants $K, \tau$ in the definition of dominated splitting and the constants $K,\tau$ in the condition \eqref{eq:bogo}. The use of ergodic theory also necessitated the restriction of their results to compact Hausdorff spaces as opposed to more general topological spaces. By adapting the proof of Theorem \ref{thm:main} to the finite-dimensional context we are able to obtain a quantitative version of the Bochi-Gourmelon result which does not even require compactness of the base dynamical system.

Below, we have endowed $\R^d$ with the standard Euclidean inner product and corresponding norm $\|\cdot\|$.
\begin{thm}\label{thm:finiteDimensional}
Let $T$ be a homeomorphism of a topological space $X$, $d \in \N$, and let $A : X \times \N \to L(\R^d)$ be a continuous cocycle of invertible linear operators for which $\kappa := \sup_{x \in X} \|A_x\| \cdot \sup_{x \in X} \|A^{-1}_x\| < \infty$.

If $A$ satisfies the condition \eqref{eq:bogo} for constants $K > 0, \tau \in (0,1)$ and some $k < d$, then there exist a constant $\Rc_\Ec > 0$ and a continuous, $A$-equivariant splitting $\R^d = \Ec(x) \oplus \Fc(x)$ for $x \in X$ with the property that
\begin{gather*}
\|(A^n_x|_{\Ec(x)})^{-1}\| \leq R_{\Ec}^{-1} \, \big( \sigma_k(A^n_x)\big)^{-1} \, , \quad \|A^n_x|_{\Fc(x)}\| \leq R_{\Ec}^{-2} \, \sigma_{k + 1}(A^n_x) \, ,  \\
\quad \text{ and } \quad \| \pi_x \| \leq R_{\Ec}^{-1} \, ;
\end{gather*}
here $\pi_x : \R^d \to \Ec(x)$ refers to the projection operator onto $\Ec(x)$ parallel to $\Fc(x)$. The constant $R_{\Ec} \in (0,1)$ is given by
\[
R_{\Ec} = \inf_{\substack{x \in X \\ n \geq 1}} \frac{|\det(A^n_x|_{\Ec(x)})|}{\prod_{i = 1}^k \sigma_i (A^n_x)} \, 
\]
and satisfies
\[
\log R_{\Ec} \geq - \bigg(2 k \log \kappa \cdot \bigg \lceil \frac{\log (3 \kappa^3 K)  -  \log (1 - \tau) }{- \log \tau} \bigg \rceil + \frac{36 k}{ 1 - \tau} \bigg).
\]
\end{thm}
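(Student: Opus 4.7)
The plan is to adapt the constructive, SVD-based proof of Theorem~\ref{thm:main} to the finite-dimensional invertible setting while tracking every constant explicitly. Compactness of $X$ will not be needed because the uniform condition-number bound $\kappa$ supplies the uniform convergence and continuity estimates that compactness would otherwise provide.

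I will build the splitting as limits of right singular subspaces. For each $x \in X$ and $n \geq 1$, let $\Ec_n(x)$ and $\Fc_n(x)$ denote respectively the top-$k$ and bottom-$(d-k)$ right singular subspaces of $A_x^n$; these are Euclidean-orthogonal complements in $\R^d$, and \eqref{eq:bogo} implies a uniform singular-value ratio bound $\sigma_{k+1}/\sigma_k \leq K\tau^n$. A quantitative Davis--Kahan/Wedin perturbation estimate, applied to the factorization $A_x^{n+1} = A_{T^n x}\, A_x^n$ and using $\|A_{T^n x}\|, \|A_{T^n x}^{-1}\| \leq \kappa$, yields a Cauchy bound
\[
d_{\mathrm{Gr}}\bigl(\Fc_n(x), \Fc_{n+1}(x)\bigr) \;\leq\; C_1 \kappa^{a} K \tau^n
\]
with explicit $C_1, a$. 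Summing the geometric series produces a limit $\Fc(x)$ with $d_{\mathrm{Gr}}(\Fc_n(x), \Fc(x)) \leq C_1 \kappa^a K \tau^n / (1-\tau)$ uniformly in $x$; continuity of $\Fc$ follows from continuity of the SVD. Using invertibility, I set $\widetilde \Ec_n(x) := A_{T^{-n}x}^n \Ec_n(T^{-n}x)$ and define $\Ec(x) := \lim_{n\to\infty} \widetilde \Ec_n(x)$; the same perturbation argument yields geometric convergence, and the identity $A_x \widetilde \Ec_n(x) = \widetilde \Ec_{n+1}(Tx)$ makes the limit $A$-equivariant.

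The core of the theorem is the bound on $R_\Ec$. Expanding an orthonormal basis of an arbitrary $k$-dimensional subspace $W \subset \R^d$ in the full right-singular basis of $A_x^n$ produces the key identity
\[
\frac{|\det(A_x^n|_W)|^2}{\mu_k(A_x^n)^2} \;=\; |\det C_+|^2 \;+\; R_n,
\]
where $\mu_k := \sigma_1 \cdots \sigma_k$, the $k \times k$ matrix $C_+$ records the coordinates of an orthonormal basis of $W$ in $\Ec_n(x)$ (so that $|\det C_+| = \prod_{i=1}^k \cos \theta_i$ for the principal angles $\theta_i$ between $W$ and $\Ec_n(x)$), and the remainder $R_n$ is bounded by $(\sigma_{k+1}/\sigma_k)^2 \leq K^2 \tau^{2n}$ up to a factor polynomial in $\kappa$. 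Taking $W = \Ec(x)$ and substituting the angular estimate $\theta_i \leq C_2 \kappa^a K \tau^n/(1-\tau)$, for $n \geq N := \lceil(\log(3\kappa^3 K) - \log(1-\tau))/(-\log\tau)\rceil$ one obtains $|\det(A_x^n|_{\Ec(x)})|/\mu_k(A_x^n) \geq \exp(-C_3 \tau^n/(1-\tau))$, so the worst such $n$ contributes at most $36k/(1-\tau)$ to $-\log R_\Ec$ after absorbing absolute constants. For $n < N$ the trivial bound $|\det(A_x^n|_W)|/\mu_k(A_x^n) \geq (\sigma_d/\sigma_1)^k \geq \kappa^{-2kn}$, valid for any $k$-dimensional $W$, contributes at most $2kN \log\kappa$. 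Combining yields $\log R_\Ec \geq -(2kN \log\kappa + 36k/(1-\tau))$, as stated. The auxiliary norm estimates on $(A_x^n|_{\Ec(x)})^{-1}$, $A_x^n|_{\Fc(x)}$, and $\pi_x$ then follow from $R_\Ec$ via Hadamard-type inequalities and the fact that $R_\Ec$ also controls the opening angle between $\Ec(x)$ and $\Fc(x)$.

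The main obstacle is establishing the key identity above with constants polynomial in $\kappa$ and free of any hidden dependence on the ambient dimension $d$: careless estimation of the remainder term $R_n$ through the full SVD decomposition can easily lose dimension-dependent factors, which would destroy the clean $\kappa$-polynomial bound demanded by the theorem statement. Once that perturbation lemma is in hand, the remaining work --- existence of the Cauchy limits, continuity, equivariance, the Hadamard-type auxiliary estimates, and the geometric-series bookkeeping at the threshold $N$ --- is routine.
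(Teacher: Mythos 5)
Your construction of $\Ec$ and $\Fc$ (limits of pushed-forward top right singular subspaces backwards in time, and of bottom right singular subspaces forwards in time) is a legitimate variant of what is done here, and your bookkeeping for the initial segment $n<N$ is fine. However, the central step of your argument for the bound on $R_\Ec$ rests on a false premise. You claim that the principal angles $\theta_i$ between $W=\Ec(x)$ and the top-$k$ \emph{right} singular subspace $\Ec_n(x)$ of $A^n_x$ are $O(\kappa^a K\tau^n/(1-\tau))$, so that $|\det C_+|=\prod_i\cos\theta_i\to 1$ and the ratio $\det(A^n_x|_{\Ec(x)})/\prod_{i\le k}\sigma_i(A^n_x)$ is bounded below by $\exp(-C_3\tau^n/(1-\tau))$ for $n\ge N$. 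This is not true: under domination the forward right singular subspaces $\Ec_n(x)$ converge to $\Fc(x)^{\perp}$, not to $\Ec(x)$, while $\Ec(x)$ is aligned with the \emph{backward} data (the top left singular subspaces of $A^n_{T^{-n}x}$). Already the constant cocycle $A_x\equiv\begin{pmatrix}2&1\\0&1\end{pmatrix}$ with $k=1$ gives $\Ec=\langle e_1\rangle$ while $\Ec_n(x)\to\langle(1,1)\rangle$, so the angle stays at $\pi/4$ for all $n$ and $\det(A^n|_{\Ec})/\sigma_1(A^n)\to 1/\sqrt2$, contradicting your claimed lower bound $\exp(-C_3\tau^n/(1-\tau))\to 1$. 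Bounding $\prod_i\cos\theta_i$ away from zero uniformly in $x$ and $n$ is essentially the statement you are trying to prove (it is equivalent, up to controlled factors, to the lower bound on $R_\Ec$), so as written the key step is circular.

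The paper closes exactly this gap differently: instead of comparing $\Ec(x)$ with $E_n(x)$ at the endpoint, it telescopes in time, writing $\det(A^n_x|_{\Ec(x)})/\det(A^n_x|_{E_n(x)})$ as the product over $0\le q<n$ of the one-step ratios $\det(A_{T^qx}|\Ec(T^qx))/\det(A_{T^qx}|A^q_xE_n(x))$, and uses the genuine exponential estimate $d_H(\Ec(T^qx),A^q_xE_n(x))\lesssim\tau^q$ (pushed-forward singular spaces approach $\Ec$ along the orbit) together with the explicit Lipschitz bound for $E\mapsto\log\det(A|E)$ of Lemma \ref{lem:lipRegFD}; the threshold $Q$ in \eqref{eq:defineQ} and the crude per-step bound $2k\log\kappa$ then produce precisely the constants in the statement, and Proposition \ref{prop:genSVDHilbert} converts the volume bound into the estimates on $m(A^n_x|_{\Ec(x)})$, $\|A^n_x|_{\Fc(x)}\|$ and $\|\pi_x\|$. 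To repair your proposal you would need to replace the endpoint angular comparison by such an along-the-orbit argument (or an equivalent control of the angle between $\Ec$ and $\Fc$ obtained independently); the auxiliary issues (the identity $A_x\tilde\Ec_n(x)=\tilde\Ec_{n+1}(Tx)$ is only approximate, and your $\Fc$ needs its projection bound tied to $R_\Ec$) are minor by comparison.
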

%\noindent See \eqref{eq:rEc} in \S \ref{sec:finiteDimension} for an explicit explicit lower bounds for $R_{\Ec}$ in terms of $K, \kappa$ and $\tau$.

\medskip

By far the more difficult half of Theorem \ref{thm:main} is the construction of the equivariant splitting $V=\Ec\oplus \Fc$ given the inequality \eqref{eq:magic} between Gelfand numbers. The range of methods for constructing such splittings in Oseledets-type theorems is quite varied, especially for cocycles which act on Banach spaces. Our approach in this article is heavily influenced by the geometric ideas of M.S. Raghunathan \cite{Ra79} and D. Ruelle \cite{Ru79,Ru82} who, working in finite dimensions or in the Hilbert space context, constructed the components of measurable splittings as limits as $n \to \infty$ of subspaces associated to the singular value decomposition of $A^n_x$. Whilst subsequent authors such as R. Ma\~n\'e, Ph. Thieullen, Z. Lian and K. Lu have largely avoided this approach (\cite{Ma83,LiLu10,Th97}), it can be adapted to the Banach space context with surprising effectiveness (\cite{Bl15,GoQu14b}). The relevant material from the geometry of Banach spaces -- including a construction of an approximate singular value decomposition using Gelfand numbers, which may be of independent interest -- is presented in \S\ref{sec:geometry} below. In \S\ref{sec:hardbit} we prove Theorem \ref{thm:main}, and in \S\ref{sec:continuous} we deduce its continuous-time analogue, Theorem \ref{thm:continuousTime}.  In \S \ref{sec:finiteDimension}, we show how the methods of \S\ref{sec:hardbit} can be adjusted to produce the explicit estimates of Theorem \ref{thm:finiteDimensional}. %Finally, in \S\ref{sec:counterexamples} we list some examples which show that various natural weakenings of the hypotheses of Theorem \ref{thm:main} do not yield the same results,

%%%%%%%%%%%%%%%%%%%%%%%%%%%%%%%%%
%%%%%%%%%%%%%%%%%%%%%%%%%%%%%%%%%
%  END OF SECTION
%%%%%%%%%%%%%%%%%%%%%%%%%%%%%%%%%
%%%%%%%%%%%%%%%%%%%%%%%%%%%%%%%%%

\section{Preliminaries on Banach space geometry}\label{sec:geometry}

In this section, we give some necessary preliminaries on Banach space geometry.  Throughout, $V$ is a Banach space with norm $|\cdot|$. When $V'$ is another Banach space, we write $L(V, V')$ for the set of bounded linear operators, and we write $L(V) = L(V,V)$. 

When $E \subset V$ is a subspace, we write 
\[
m(A|_E) := \inf\{|A v| : v \in E, |v| = 1\} 
\]
for the minimal norm of $A|_E$, and when $w_1, \cdots, w_q \in V$, we write $\langle w_1, \cdots, w_q \rangle$ for the span of these vectors.

When $E, F$ are subspaces of $V$ for which $V = E \oplus F$, we write $\pi_{E \ds F}$ for the projection onto $E$ parallel to $F$. That is, $\pi_{E \ds F}|_E$ acts as the identity $\id_E$ on $E$, and $\ker \pi_{E \ds F} = F$.

%Most of the following notation was defined in the introduction, but now would be a good time to clear up anything which wasn't\footnote{Somewhere we should remark that $c_k(ABC)\leq |A|c_k(B)|C|$ and $|c_k(A)-c_k(B)|\leq |A-B|$ are obvious and will be frequently used without comment.}:

%{\bf NOTATION}
%
%\begin{itemize}
%\item $T : X \to X$ is the base homeomorphism.
%\item $V$ is a Banach space with norm $|\cdot|$.
%\item $L(V)$, resp., $L(V, V')$ refers to the set of bounded linear operators on $V$, resp., $V \to V'$.
%\item $A_x^n$ is the cocycle at time $n$ evaluated at $x$. We may also write $A(x,n)$; this notation is better in certain situations.
%\item For $A \in L(V)$ and a nontrivial subspace $E \subset V$,
%
%\item $\mathcal G(V)$ is the Grassmanian of closed subspaces of $V$. $\Gc_q(V), \Gc^q(V)$ refer to q-dimensional and q-codimensional, respectively (the latter are always assumed closed)
%\item $\mathcal E, \mathcal F$ are the upper and lower subspaces, respectively
%\end{itemize}

\subsection{Basic preliminaries}

We denote by $\Gc(V)$ the Grassmanian of closed subspaces of $V$, endowed with the Hausdorff metric $d_H$, defined for $E, E' \in \Gc(V)$ by
\[
d_H(E, E')  = \max\{ \inf\{d(e, S_{E'}) : e \in S_E\} , \inf\{d(e', S_{E}) : e' \in S_{E'} \}\} \, ,
\]
where $S_E := \{v \in E : |v| = 1\}$.

We denote by $\Gc_q(V)$, $\Gc^q(V)$ the Grassmanians of $q$-dimensional and closed $q$-codimensional subspaces, respectively.
\begin{prop}[Chapter IV, \S 2.1 of \cite{Ka95}] \label{prop:grassProps}
The metric space $(\Gc(V), d_H)$ has the following properties.
\begin{enumerate}
\item $(\Gc(V), d_H)$ is complete. 
\item The subsets $\Gc_q(V), \Gc^q(V)$ are closed in $(\Gc(V), d_H)$.
\end{enumerate}
\end{prop}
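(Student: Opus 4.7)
The plan is to verify each property in turn, following the standard geometric approach to the gap topology on Grassmannians.

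\textbf{Completeness.} Given a Cauchy sequence $(E_n)$ in $(\Gc(V), d_H)$, I would take as candidate limit
\[
E := \{ v \in V : \dist(v, E_n) \to 0 \text{ as } n \to \infty\}.
\]
Linearity is immediate: if approximants $v_n, w_n \in E_n$ converge to $v, w$, then $v_n + w_n \in E_n$ converges to $v + w$, and similarly for scalar multiples. Closedness of $E$ in $V$ follows from the triangle inequality $\dist(v, E_n) \leq |v - w| + \dist(w, E_n)$. The substantive step is to verify $d_H(E_n, E) \to 0$: given a unit vector $e \in E_n$, I would build a Cauchy sequence $e_m \in E_m$ with $e_n = e$ by successively choosing unit vectors $e_{m+1} \in E_{m+1}$ close to $e_m$, using the Cauchy property (passing to a subsequence with summable gaps if necessary). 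The limit lies in $E$ and is close to $e$; the other direction follows by normalizing an approximating sequence $v_n \in E_n$ to a unit vector $v \in E$.

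\textbf{Closedness of $\Gc_q(V)$ and $\Gc^q(V)$.} Suppose $E_n \to E$ in $d_H$ with $\dim E_n = q$ for every $n$. To conclude $\dim E = q$ I would argue from both sides. For $\dim E \geq q$: equip each $E_n$ with an Auerbach basis $e_n^1, \ldots, e_n^q$, and let $\varphi_n^i \in V^*$ be unit-norm Hahn--Banach extensions of the biorthogonal functionals on $E_n$. Using $d_H(E_n, E) \to 0$, pick $\tilde e_n^i \in E$ with $|\tilde e_n^i - e_n^i|$ small; the near-biorthogonality $\varphi_n^i(\tilde e_n^j) \approx \delta_{ij}$ forces the $\tilde e_n^i$ to be linearly independent for $n$ large, whence $\dim E \geq q$. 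For $\dim E \leq q$: if $E$ contained $q+1$ independent vectors, a Hahn--Banach biorthogonal system and a perturbation argument transfer them into $E_n$, contradicting $\dim E_n = q$. For $\Gc^q(V)$ I would run the analogue on quotients: pick vectors $v_n^i \in V$ whose cosets form an Auerbach basis of $V/E_n$, then transfer to $V/E$ via the comparison estimate $|\dist(v, E_n) - \dist(v, E)| \leq C \, d_H(E_n, E) \, |v|$ which follows from the definition of $d_H$ and a short scaling argument.

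The main obstacle will be the $\Gc^q$ case: because $d_H$ is defined through unit spheres only, it does not directly control quotient norms, so transferring the Auerbach machinery through to $V/E$ depends on the distance-comparison estimate above. This is the key technical ingredient and is the point where the structural analysis of the gap metric in \cite[Ch.~IV, \S2.1]{Ka95} is invoked.
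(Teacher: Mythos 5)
Your proposal is essentially correct, but note that the paper offers no proof of this proposition at all: it is quoted directly from Kato (Ch.~IV, \S 2.1 of \cite{Ka95}), so the relevant comparison is with Kato's treatment, and your route is genuinely different. Kato obtains closedness of $\Gc_q(V)$ and $\Gc^q(V)$ from a stronger local statement: if the gap between two closed subspaces is $<1$ they have the same dimension and the same codimension. The dimension half rests on the Krein--Krasnoselskii--Milman lemma (a subspace of strictly larger finite dimension contains a unit vector at distance $1$ from the smaller subspace), and the codimension half then comes for free by duality through annihilators, i.e.\ the identity $\Gap(E,E')=\Gap(E'^{\circ},E^{\circ})$ that the paper records in Lemma \ref{lem:apertureProps}. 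You instead re-prove exactly the special case needed, by hand: Auerbach bases plus norm-one Hahn--Banach extensions and near-biorthogonality for $\Gc_q(V)$, and for $\Gc^q(V)$ the quotient-norm comparison $|\dist(v,E_n)-\dist(v,E)|\le C\,d_H(E_n,E)\,|v|$, which is indeed correct (your scaling argument gives an absolute constant $C$), so the transfer through $V/E_n$ and $V/E$ goes through. What each buys: your argument is self-contained and quantitative, avoiding annihilators entirely; Kato's gives the stronger ``gap $<1$ implies equal (co)dimension'' statement with less work in the codimensional case, since duality replaces the quotient-space machinery. Two points in your completeness step deserve an explicit line, though neither is a real gap: the approximation of $S_E$ by $S_{E_n}$ must be uniform over $S_E$, which follows from the Cauchy property rather than from the pointwise definition of $E$; and the unit vector produced by your telescoping construction along a subsequence belongs to $E$ (defined via the full sequence) again by Cauchyness, after which the convergence of the full sequence follows from convergence along the subsequence in the usual way.
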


It is actually somewhat inconvenient to compute directly with $d_H$, and so frequently it will be easier to use the \emph{gap}, defined for $E, E' \in \Gc(V)$ by
\[
\Gap (E, E') := \sup_{e \in S_E} d(e, E').
\]
Below, for a set $S \subset V$ we write $S^{\circ} = \{\ell \in V^* : \ell(v) = 0 \text{ for all } v \in S\}$ for the annihilator of $S$. For $a, b \in \R$ we write $a \vee b = \max\{a, b\}$.
\begin{lem} \label{lem:apertureProps}
Let $E, E' \in \Gc(V)$.
\begin{enumerate}
\item $\Gap(E, E') \vee \Gap(E', E) \leq d_H(E, E') \leq 2 (\Gap(E, E') \vee \Gap(E', E))$.
\item $\Gap(E, E') = \Gap(E'^{\circ}, E^{\circ})$.
\item Let $q \in \N$, and assume either that $E, E' \in \Gc_q(V)$ or that $E, E' \in \Gc^q(V)$. If $\Gap(E, E') < \frac{1}{q}$, then
\begin{align}\label{eq:gapEstimate}
\Gap(E', E) \leq \frac{q \Gap(E, E')}{1 - q \Gap(E, E')}.
\end{align}
\end{enumerate}
\end{lem}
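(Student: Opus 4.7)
The plan is to prove the three parts in order, with each later part building on the earlier ones.

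For part (1), the lower bound $\Gap(E,E') \le d_H(E,E')$ is immediate since $d(e,E') \le d(e,S_{E'})$ whenever $e\in S_E$ (because $S_{E'}\subseteq E'$). For the upper bound, I would show that $d(e,S_{E'}) \le 2\,\Gap(E,E')$ for $e\in S_E$. Fix $\varepsilon>0$ and pick $v\in E'$ with $|e-v|\le \Gap(E,E')+\varepsilon$. Then $\bigl||v|-1\bigr|\le|e-v|$, so $v\ne 0$ (for $\varepsilon$ small) and the standard computation
\[
\left|e-\frac{v}{|v|}\right|\le |e-v|+\bigl|v-v/|v|\bigr|\le 2|e-v|
\]
shows $d(e,S_{E'})\le 2\Gap(E,E')$ after letting $\varepsilon\to 0$. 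Symmetry and the definition of $d_H$ finish part (1).

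For part (2), I would use the Hahn--Banach duality formula $d(e,E')=\sup\{|\ell(e)| : \ell\in (E')^{\circ},\ |\ell|\le 1\}$. Plugging this in,
\[
\Gap(E,E')=\sup_{e\in S_E}\sup_{\ell\in B_{(E')^{\circ}}}|\ell(e)|
=\sup_{\ell\in B_{(E')^{\circ}}}\sup_{e\in S_E}|\ell(e)|
=\sup_{\ell\in B_{(E')^{\circ}}}\|\ell|_E\|.
\]
Applying Hahn--Banach once more to each inner supremum, $\|\ell|_E\| = d(\ell,E^{\circ})$, and positive homogeneity of this last quantity in $\ell$ allows me to replace $B_{(E')^\circ}$ by $S_{(E')^\circ}$. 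This yields $\Gap(E,E')=\Gap((E')^{\circ},E^{\circ})$.

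Part (3) is the main obstacle, since it is the only place where the finite-dimensionality assumption is essential. I first handle the case $E,E'\in\Gc_q(V)$ using an Auerbach basis $e_1,\ldots,e_q$ of $E$ with dual functionals $\ell_1,\ldots,\ell_q\in V^*$ satisfying $|e_i|=|\ell_i|=1$ and $\ell_i(e_j)=\delta_{ij}$; this is where I use $\dim E<\infty$. Given any $\varepsilon>0$, pick $e'_i\in E'$ with $|e_i-e'_i|\le \Gap(E,E')+\varepsilon$, and define $T\colon E\to E'$ by $T(\sum c_ie_i)=\sum c_ie'_i$. Using $|c_j|=|\ell_j(x)|\le |x|$, I obtain
\[
|Tx-x|\le \sum_{i=1}^q|c_i|\,|e_i-e'_i|\le q\,(\Gap(E,E')+\varepsilon)\,|x|.
\]
For $\Gap(E,E')<1/q$ and $\varepsilon$ small, this makes $T$ injective with $|Tx|\ge (1-q\Gap(E,E'))|x|$, and since $\dim E=\dim E'=q<\infty$, injectivity forces $T(E)=E'$. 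Then, given any $e'\in S_{E'}$, write $e'=Tx$ with $|x|\le (1-q\Gap(E,E'))^{-1}$; since $x\in E$,
\[
d(e',E)\le |e'-x|=|Tx-x|\le \frac{q\,\Gap(E,E')}{1-q\,\Gap(E,E')},
\]
after letting $\varepsilon\to 0$. Taking supremum over $e'\in S_{E'}$ gives \eqref{eq:gapEstimate}. For the case $E,E'\in\Gc^q(V)$, the annihilators $E^{\circ},(E')^{\circ}\in\Gc_q(V^*)$ are $q$-dimensional, so applying the case just proved to $(E')^{\circ},E^{\circ}$ together with part (2) (applied twice) transports the inequality back to $E,E'$.
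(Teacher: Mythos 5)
Your proof is correct. Note, however, that the paper contains no argument to compare against: parts (1) and (2) are quoted from Chapter IV, \S 2.1 and \S 2.3 of \cite{Ka95}, and part (3) from \cite{Bl15}. What you have written is essentially a self-contained reconstruction of those standard proofs: the radial-projection comparison $d(e,S_{E'})\leq 2\,d(e,E')$ for (1) (you correctly work with the usual sup-formulation of $d_H$; the ``inf'' in the paper's displayed definition is evidently a typo), the two applications of Hahn--Banach, namely $d(e,E')=\sup\{|\ell(e)|:\ell\in(E')^{\circ},\ |\ell|\leq 1\}$ and $\|\ell|_E\|=d(\ell,E^{\circ})$, for (2), and the Auerbach-basis perturbation map $T\colon E\to E'$ for (3), with the codimension-$q$ case transported through annihilators by two uses of (2) — which is precisely the role (2) plays in the lemma. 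Two minor points worth tidying, neither affecting correctness: in (1), when $d(e,E')=1$ your chosen $v$ could be arbitrarily short, but then the claim is trivial since $d(e,S_{E'})\leq 2$ whenever $E'\neq\{0\}$; and in (3) the Auerbach functionals $\ell_i$ are only ever evaluated on $E$, so either keep them in $E^*$ or extend them to $V$ by Hahn--Banach with norm one — all that is used is $|c_i|=|\ell_i(x)|\leq|x|$ for $x\in E$.
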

Parts (1) and (2) can be found in Chapter IV, \S 2.1 and \S 2.3 of \cite{Ka95}, respectively. Part (3) is proved in \cite{Bl15}.

We say that $E, F \in \Gc(V)$ are \emph{complements} if $V = E \oplus F$; in this case, the projection operator $\pi_{E \ds F}$ associated to this splitting is automatically a bounded operator by the Closed Graph Theorem.

\begin{defn}\label{defn:angle}
Let $E, F \in \Gc(V)$. The \emph{minimal angle} $\theta(E, F) \in [0,\frac{\pi}{2}]$ from $E$ to $F$ is defined by
\[
\sin \theta (E, F) = \inf\{|e - f | : e \in S_E, f \in F\}.
\]
\end{defn}
Roughly speaking, the minimal angle $\theta(E, F)$ will be small whenever $E$ is inclined towards $F$. Note that $\theta(E, F)$ may not equal $\theta(F, E)$. However, when $E, F$ are complemented, we have the formula (see \cite{Bl15})
\[
\sin \theta(E, F) = |\pi_{E \ds F}|^{-1} \, ,
\]
and so in this case, since $|\pi_{E \ds F}| \leq 1 + |\pi_{F \ds E}| \leq 2 |\pi_{F \ds E}|$, we have $\sin \theta(F, E) \leq 2 \sin \theta(E, F)$.

\medskip

Complementation is an open condition in $\Gc(V)$.
\begin{lem}\label{lem:openCond}
Let $E, F \in \Gc(V)$ be complements. If $E' \in \Gc(V)$ is such that $d_H(E, E') < \sin \theta(E, F)$, then $E'$ and $F$ are complemented, and
\[
|\pi_{E' \ds F}| \leq \frac{|\pi_{E \ds F}|}{1 - |\pi_{E \ds F}| d_H(E, E')} \, .
\]
Consequently, being complemented is an open condition in $(\Gc(V), d_H)$.
\end{lem}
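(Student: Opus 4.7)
Writing $\pi := \pi_{E \ds F}$, so that $\sin\theta(E,F) = |\pi|^{-1}$, the hypothesis becomes $|\pi|\,d_H(E,E') < 1$. The plan is to show that $\pi|_{E'} : E' \to E$ is a bounded linear isomorphism, and then to define
\[
\pi_{E' \ds F} := (\pi|_{E'})^{-1} \circ \pi,
\]
which is manifestly a projection onto $E'$ with kernel $F$, with operator norm bounded by $|\pi| \cdot |(\pi|_{E'})^{-1}|$. Two things must be checked: surjectivity of $\pi|_{E'}$, equivalently $V = E' + F$, with a quantitative bound on $|(\pi|_{E'})^{-1}|$; and injectivity of $\pi|_{E'}$, equivalently $E' \cap F = \{0\}$.

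For surjectivity I would run a Neumann-type iteration: given a target $w_0 \in E$, inductively pick $v_n \in E'$ with $|v_n - w_n| \leq \Gap(E,E')\,|w_n|$ (possible by definition of the gap, together with $\Gap(E,E') \leq d_H(E,E')$ from Lemma~\ref{lem:apertureProps}(1)), then set $w_{n+1} := w_n - \pi(v_n) = -\pi(v_n - w_n)$. Since $|w_{n+1}| \leq |\pi|\,d_H(E,E')\,|w_n|$ is a strict contraction, the $w_n$ decay geometrically and $\sum_n v_n$ converges absolutely in the closed subspace $E'$ to some $e'$ satisfying $\pi(e') = w_0$ by telescoping. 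The geometric series bound delivers $|(\pi|_{E'})^{-1}(w_0)| \leq |w_0|/(1 - |\pi|\,d_H(E,E'))$, which after multiplying by $|\pi|$ yields exactly the claimed bound on $|\pi_{E' \ds F}|$.

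For injectivity I would use a lower bound on the minimal angle $\sin\theta(E',F)$. For any unit $e' \in E'$ and any $e \in E$, since $(I - \pi)(e) = 0$, the identity $(I - \pi)(e') = (I - \pi)(e'-e)$ combined with $d(e',E) \leq \Gap(E',E) \leq d_H(E,E')$ gives $|(I - \pi)(e')| \leq |I - \pi|\,d_H(E,E')$, hence $|\pi(e')| \geq 1 - |I - \pi|\,d_H(E,E')$. For $f \in F$, the relation $\pi(e' - f) = \pi(e')$ then yields $|e' - f| \geq |\pi(e')|/|\pi|$, so $\sin\theta(E',F) > 0$ and $E' \cap F = \{0\}$. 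The main technical obstacle is that this injectivity step naturally requires $|I - \pi|\,d_H(E,E') < 1$, slightly stronger than the stated hypothesis $|\pi|\,d_H(E,E') < 1$ since in general one only has $|I - \pi| \leq 1 + |\pi|$; closing this small gap will require either a sharpening of the angle estimate or a direct uniqueness argument for the Neumann decomposition $v = e' + f$ under the weaker hypothesis.
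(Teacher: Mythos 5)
The paper itself does not prove this lemma (it cites \cite{BlYo15}), so your proposal has to stand on its own; its skeleton is reasonable. Writing $\pi:=\pi_{E\ds F}$, showing $\pi|_{E'}\colon E'\to E$ is an isomorphism and setting $\pi_{E'\ds F}=(\pi|_{E'})^{-1}\circ\pi$ is a correct strategy, and your Neumann-type iteration does prove surjectivity, i.e.\ $V=E'+F$, under the stated hypothesis (modulo the harmless point that $d(w_n,E')$ need not be attained, so one should take $v_n$ within $(\Gap(E,E')+\epsilon)|w_n|$ and let $\epsilon\to0$ at the end). But there are two genuine gaps. First, the geometric series does \emph{not} deliver the bound you claim: from $|v_n|\le |w_n|+|v_n-w_n|\le (1+d_H(E,E'))|w_n|$ and $|w_n|\le (|\pi|\,d_H(E,E'))^n|w_0|$ you only get $|(\pi|_{E'})^{-1}w_0|\le \frac{(1+d_H(E,E'))\,|w_0|}{1-|\pi|\,d_H(E,E')}$, i.e.\ an extra factor $1+d_H(E,E')$. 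Since the precise constant in the lemma is used quantitatively later in the paper (e.g.\ in Claim \ref{cla:cheatComp} and in Lemma \ref{lem:lowerSubspace}), this is not cosmetic. Second, as you yourself flag, your injectivity argument requires $|I-\pi|\,d_H(E,E')<1$, which does not follow from the hypothesis $|\pi|\,d_H(E,E')<1$; so $E'\cap F=\{0\}$ is not established, and even where your argument applies it again produces the weaker constant $|\pi|/(1-|I-\pi|\,d_H(E,E'))$.

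Both gaps are closed by a single observation you did not exploit: $d_H$ is the Hausdorff distance between the \emph{unit spheres} of the subspaces (the displayed formula in \S\ref{sec:geometry} has $\inf$ where the Hausdorff metric requires $\sup$, but that is evidently a typo), so every unit vector $e'\in E'$ admits a \emph{unit} vector $e\in E$ with $|e'-e|\le d_H(E,E')+\epsilon$. Then for every $f\in F$, $|e'-f|\ge |e-f|-|e'-e|\ge \sin\theta(E,F)-d_H(E,E')-\epsilon$, and letting $\epsilon\to 0$ gives $\sin\theta(E',F)\ge \sin\theta(E,F)-d_H(E,E')>0$. This yields $E'\cap F=\{0\}$ at once (take $f=e'$), so together with your surjectivity step $E'$ and $F$ are complements; and then the identity $\sin\theta(E',F)=|\pi_{E'\ds F}|^{-1}$ (stated after Definition \ref{defn:angle}) gives exactly $|\pi_{E'\ds F}|\le \frac{1}{\sin\theta(E,F)-d_H(E,E')}=\frac{|\pi|}{1-|\pi|\,d_H(E,E')}$. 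In short: keep your iteration for surjectivity, but obtain both injectivity and the norm bound from the sphere-to-sphere estimate on $\sin\theta(E',F)$ rather than by pushing $\pi$ and $I-\pi$ through nearby vectors, which is precisely where the hypothesis and the constant degrade.
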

A proof of lemma \ref{lem:openCond} is given in \cite{BlYo15}.

It is not necessarily true that every $E \in \Gc(V)$ has a complement (unless $V$ is a Hilbert space). However, complements always exist for members of $\Gc_q(V), \Gc^q(V)$ for any $q \geq 1$:
\begin{lem}[III.B.10 and III.B.11 of \cite{Wo96}] \label{lem:compExist}
For any $E \in \Gc_q(V)$, there exists a complement $F$ for $E$ such that $\sin \theta(E, F) \geq \frac{1}{\sqrt{q}}$, i.e., $|\pi_{E \ds F}| \leq \sqrt{q}$.

For any $F \in \Gc^q(V)$, there exists a complement $E$ for $F$ such that $\sin \theta(E, F) \geq \frac{1}{\sqrt{q} + 1}$, i.e., $|\pi_{E \ds F}| \leq \sqrt{q} + 1$.
\end{lem}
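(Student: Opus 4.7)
I would attack this via the Kadec--Snobar theorem, which supplies the $\sqrt{q}$ bound of the first assertion, with an additional lifting step to pass to the $q$-codimensional setting of the second. Both halves rest on John's ellipsoid theorem: for any $q$-dimensional normed space $(X, |\cdot|)$, the ellipsoid $\mathcal{E} \subset B_X$ of maximal volume induces a Euclidean norm $\|\cdot\|_2$ on $X$ satisfying $|\cdot| \leq \|\cdot\|_2 \leq \sqrt{q}\,|\cdot|$, together with a decomposition of the identity
\[
\Id_X \;=\; \sum_{i=1}^m c_i\, u_i \otimes u_i^\flat,
\]
where the $u_i \in X$ are contact points of $\mathcal{E}$ with $\partial B_X$ (so in particular $|u_i| = \|u_i\|_2 = 1$), the weights $c_i > 0$ sum to $q$, and $u_i^\flat = \langle \cdot, u_i \rangle$ is simultaneously a supporting functional for $B_X$ at $u_i$, so that $|u_i^\flat|_{X^*} = 1$.

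For the first assertion, apply John to $(E, |\cdot|_E)$ and extend each $u_i^\flat \in E^*$ to $\tilde u_i^\flat \in V^*$ by Hahn--Banach, preserving the norm so that $|\tilde u_i^\flat|_{V^*} = 1$. The operator $P \colon V \to E$ defined by $P(v) := \sum_i c_i\, \tilde u_i^\flat(v)\, u_i$ satisfies $P|_E = \Id_E$ by the John decomposition, so $F := \ker P$ is a closed complement of $E$. To bound $|P|$, factor $P = K \circ J$ through $\ell^2_m$, where $J(v) := (\sqrt{c_i}\, \tilde u_i^\flat(v))_i$ and $K(a) := \sum_i \sqrt{c_i}\, a_i\, u_i$. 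The estimate $|J(v)|_{\ell^2_m}^2 = \sum_i c_i |\tilde u_i^\flat(v)|^2 \leq (\sum_i c_i)\, |v|^2 = q\, |v|^2$ is immediate from $|\tilde u_i^\flat|_{V^*} = 1$, while the John identity yields $K K^* = \Id_E$ in the inner product $\langle \cdot, \cdot \rangle$, so that $K$ is a Hilbert-space coisometry with $|K|_{\ell^2_m \to (E, \|\cdot\|_2)} = 1$ and hence $|K|_{\ell^2_m \to (E, |\cdot|_E)} \leq 1$ via $|\cdot|_E \leq \|\cdot\|_2$. Composing yields $|P| \leq \sqrt{q}$, as required.

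For the second assertion I would dualize: since $F^\circ \subset V^*$ is $q$-dimensional, the first part applied to $V^*$ yields a projection $Q \colon V^* \to F^\circ$ with $|Q| \leq \sqrt{q}$. Were $V$ reflexive, $\ker Q$ would be weak-$*$ closed and coincide with the annihilator $E^\circ$ of a $q$-dimensional subspace $E \subset V$ complementing $F$, with $|\pi_{E \ds F}| = |Q|$ by the usual duality between projections on $V$ and $V^*$; the main obstacle is that in general $\ker Q$ is only norm-closed, and so need not arise as an annihilator at all. To handle this, from the Kadec--Snobar factorization of $Q$ I would extract a system $\{f_i\}_{i=1}^q \subset F^\circ$ satisfying both $|f_i|_{V^*} \leq 1$ and the joint bound $\sum_i |f_i(v)|^2 \leq |v|^2$ for all $v \in V$, then take near-minimal norm lifts $v_i \in V$ of the dual basis $\bar v_i \in V/F$ of $\{f_i\}$, so that $f_j(v_i) = \delta_{ij}$ and $|v_i|_V \leq |\bar v_i|_{V/F} + \epsilon$. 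The subspace $E := \mathrm{span}\{v_i\}$ then complements $F$ in $V$, with $\pi_{E \ds F}(v) = \sum_i f_i(v)\, v_i$; careful combination of the joint bound on the $f_i$'s with the near-isometric control on the $v_i$'s produces the claimed estimate $|\pi_{E \ds F}| \leq \sqrt{q}+1$, in which the additive $+1$ is exactly the unavoidable cost of this final lifting step in a non-reflexive $V$.
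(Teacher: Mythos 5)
Your first half is fine: the John-ellipsoid decomposition of $\Id_E$, Hahn--Banach extension of the contact functionals, and the factorization of $P$ through $\ell^2_m$ with $|J|\leq\sqrt q$ and $K$ a coisometry is exactly the standard Kadec--Snobar argument, which is what the paper's citation to III.B.10 of Wojtaszczyk covers. (One should note $m$ may exceed $q$, but that is harmless here.)

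The second half, however, has a genuine gap. You correctly identify the obstacle (in general $\ker Q$ is not weak-$*$ closed), but the proposed repair does not constitute a proof. First, the ``system $\{f_i\}_{i=1}^q\subset F^\circ$'' is not something you can simply read off from the Kadec--Snobar factorization of $Q$: the scalar components of that factorization are Hahn--Banach extensions living in $V^{**}$, and its $F^\circ$-components are the $m\geq q$ weighted John contact points, which form an overcomplete weighted system rather than a basis of size $q$; so the extraction step is unjustified as stated. Second, and more seriously, the properties you do list ($|f_i|_{V^*}\leq 1$ and $\sum_i|f_i(v)|^2\leq|v|^2$) are not sufficient: the resulting bound on $\pi_{E\ds F}(v)=\sum_i f_i(v)v_i$ requires control of the dual-basis norms $|\bar v_i|_{V/F}$, which you never obtain and which does not follow from those properties (e.g.\ for $V/F=\ell^\infty_q$ and $f_i=q^{-1/2}e_i$ the joint bound holds but the dual basis has norm $\sqrt q$, and Cauchy--Schwarz then only gives $|\pi_{E\ds F}|\leq q$). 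What you actually need is a biorthogonal system with $\sum_i|f_i(v)|^2\leq|v|^2$ \emph{and} $\sum_i|\bar v_i|_{V/F}^2\lesssim q$, i.e.\ a factorization of $\Id_{V/F}$ through $\ell^2_q$ with $\|S\|\,\pi_2(S^{-1})\leq\sqrt q$; this is true (it is the statement $\pi_2(\Id_Z)=\sqrt{\dim Z}$, itself a John-type theorem), but it is precisely the content hidden in your phrase ``careful combination,'' so the key idea is missing rather than proved. A cleaner route to the codimension-$q$ statement is to keep your projection $Q:V^*\to F^\circ$ of norm $\leq\sqrt q$, use Goldstine (or local reflexivity) to replace its $V^{**}$-coefficients by elements of the unit ball of $V$ so as to make $Q$ weak-$*$ continuous up to a small error on $F^\circ$, correct by composing with $(\tilde Q|_{F^\circ})^{-1}$ so that it is again a projection onto $F^\circ$, and then pass to the preadjoint projection on $V$, whose kernel is $F$ and whose norm is at most $(1+\epsilon)\sqrt q\leq\sqrt q+1$.
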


\begin{rmk}\label{rmk:gapEst}
The following method will frequently be used for estimating the gap $\Gap(\cdot, \cdot)$. Let $E, F \in \Gc(V)$ be complements, and let $E' \in \Gc(V)$. Then, we have the simple estimate
\[
\Gap (E', E) \leq |\pi_{F \ds E} |_{E'}| \, .
\]
In particular, if $E$ and $E'$ are both $q$-dimensional (or both closed and $q$-codimensional), then since $d_H(E,E')\leq 2$ we have
\[d_H(E',E)\leq 4q|\pi_{F \ds E} |_{E'}| \]
by considering separately the cases in which $|\pi_{F \ds E} |_{E'}| <1/2q$ and otherwise.
This bound will be used time and again, in various guises, throughout the paper.
\end{rmk}

\subsection{The induced volume and determinants for operators on Banach spaces}

In this section we discuss the induced volume on finite-dimensional subspaces. Much of this material is given elsewhere \cite{Bl15, BlYo15}, and so no proofs are given in this section.

Let $E \subset V$ be a finite dimensional subspace. We define the \emph{induced volume} $m_E$ to be the Haar measure on $E$ normalized so that
\[
m_E \{v \in E : |v| \leq 1\} = \omega_{\dim E} \, ,
\]
where for $q \in \N$ we define $\omega_q$ to be the (standard) volume of the Euclidean unit ball in $\R^q$.

\begin{lem}\label{lem:indVol}
Let $q \in \N$ and let $E \subset V$ be a $q$-dimensional subspace. The induced volume $m_E$ satisfies the following.
\begin{enumerate}
\item For any $v \in E$ and any Borel $B \subset E$, we have $m_E(v + B) = m_E (B)$.
\item If $m'$ is any other non-zero, translation-invariant measure on $E$, then $m', m_E$ are equivalent measures. The Radon-Nikodym derivative $\frac{d m'}{dm_E}$ is constant on $E$, and equals $\frac{m'(B)}{m_E(B)}$, where $B \subset E$ is any Borel set with positive $m_E$-measure.
\item For any $a > 0$ and any Borel measurable set $B \subset E$, we have $m_E (a B) = a^q m_E(B)$.
\item Let $w_1, \cdots, w_q$ is any set of vectors in $E$, and write $P[v_1,\cdots,v_q] = \{\sum_{i = 1}^q a_i v_i : 0 \leq a_i \leq 1\}$ for the parallelepiped spanned by $\{v_1, \cdots, v_q\}$. Then, for any $\lambda_1, \cdots, \lambda_k \in \R$,
\[
m_E P[\la_1 w_1, \la_2 w_2, \cdots, \la_q w_q] = \bigg( \prod_{i = 1}^k |\la_i| \bigg)  m_E P[w_1, \cdots, w_q] \, .
\]
\end{enumerate}
\end{lem}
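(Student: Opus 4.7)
The plan is to derive all four parts from the uniqueness (up to positive scalar) of Haar measure on the locally compact abelian group $(E,+)$, together with the scaling properties of Lebesgue measure on $\R^q$. A pleasant feature of this approach is that the specific normalization factor $\omega_q$ in the definition of $m_E$ plays no role whatsoever in the four identities: it appears symmetrically on both sides of each one and cancels. So the proof reduces to transport-of-structure arguments that work for any Haar measure on $E$.

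Part (1) is immediate from the definition, as $m_E$ is by construction a Haar measure on $(E,+)$. For part (2), since $E$ is a second-countable locally compact abelian group, classical uniqueness of Haar measure tells us that any nonzero translation-invariant Borel measure $m'$ on $E$ equals $c\cdot m_E$ for a unique constant $c>0$; evaluating on any Borel set $B$ with $m_E(B)>0$ identifies $c=m'(B)/m_E(B)$. The only point worth checking is that $m'$ (resp.\ $m_E$) is sufficiently regular for the uniqueness theorem to apply, but on a second-countable locally compact space local finiteness of $m'$ already implies Radon, and this is automatic here.

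For parts (3) and (4), the strategy is to fix an arbitrary linear isomorphism $\phi\colon\R^q\to E$ and push forward $q$-dimensional Lebesgue measure $\lambda_q$. The measure $\phi_*\lambda_q$ is translation-invariant on $E$, so by (2) there exists $c>0$ with $\phi_*\lambda_q=c\cdot m_E$. Part (3) now falls out of the identity $\phi^{-1}(aB)=a\phi^{-1}(B)$ combined with $\lambda_q(aS)=a^q\lambda_q(S)$: the constant $c$ cancels between the two sides. For (4), if $w_1,\ldots,w_q$ fail to be linearly independent then the parallelepiped lies in a proper subspace of $E$, so $m_E P[w_1,\ldots,w_q]=0$ and both sides vanish. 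Otherwise $w_1,\ldots,w_q$ form a basis of $E$, and one chooses $\phi$ to send the standard basis of $\R^q$ to $(w_1,\ldots,w_q)$; then $\phi^{-1}(P[w_1,\ldots,w_q])=[0,1]^q$ and $\phi^{-1}(P[\lambda_1 w_1,\ldots,\lambda_q w_q])$ is the axis-aligned box $\prod_i[\min\{0,\lambda_i\},\max\{0,\lambda_i\}]$ of Lebesgue measure $\prod_i|\lambda_i|$. Taking the ratio through $\phi_*\lambda_q=c\cdot m_E$ yields the claim.

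There is really no conceptual obstacle: each part is either a direct invocation of Haar uniqueness or a transport of the corresponding fact about $\lambda_q$ across a linear isomorphism. The main thing to be careful about is the degenerate case in (4), which is handled separately, and the verification that the uniqueness hypothesis is met — both of which are standard.
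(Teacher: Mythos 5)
Your argument is correct, and it is the standard one: the paper itself offers no proof of this lemma, deferring to \cite{Bl15, BlYo15}, and the proof there is of the same nature (uniqueness of Haar measure plus transport of Lebesgue measure on $\R^q$ through a linear isomorphism onto $E$), so there is no genuinely different route to compare. Parts (1), (3) and (4) as you present them are complete, including the separate treatment of the degenerate case in (4). The one point to tighten is in part (2): your remark that local finiteness of $m'$ ``is automatic here'' is not literally true for an arbitrary nonzero translation-invariant Borel measure --- counting measure on $E$ is translation-invariant and nonzero but not equivalent to $m_E$ --- so the uniqueness theorem does require $m'$ to be locally finite (equivalently Radon, given that $E$ is second countable and locally compact). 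This hypothesis is the implicit convention in the lemma and is satisfied by every measure to which part (2) is applied (in particular by pushforwards of Lebesgue or induced volume under linear isomorphisms, as in your proofs of (3) and (4)), so the gap is one of phrasing rather than substance, but it should be stated as a hypothesis rather than claimed to be automatic.
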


The determinant in this setting is defined in terms of ratios of induced volumes, as follows. Below, we have written $B_E = \{v \in E : |v| \leq 1\}$.
\begin{defn}\label{defn:det}
Let $A \in L(V, V')$ be a map of Banach spaces, and let $E \subset V$ be a finite dimensional subspace. We define the determinant $\det(A | E)$ of $A$ on $E$ by
\[
\det (A|E) = 
\begin{cases}
\frac{m_{A E}(A B_E)}{m_E(B_E)} & A|_E \text{ injective,} \\
0 & \text{otherwise.}
\end{cases}
\]
\end{defn}
\noindent We emphasize that this notion of determinant is \emph{unsigned}; for finite-dimensional inner product spaces this notion of determinant actually refers to the absolute value of the determinant.

For the remainder of this section we recall salient properties of the determinant. We begin by recalling how the determinant behaves under splittings:

\begin{lem}[Lemma 2.15 in \cite{Bl15}] \label{lem:detSplit}
Let $A \in L(V)$ and $E \subset V$ have dimension $q$. Let $E = G \oplus H$ be a splitting with $\dim G = l < q$, and assume that $A|_E$ is injective. Writing $E' = A E, G' = A G, H' = A H$, we have the estimate
\[
C^{-1} \big( \sin \theta(G', H') \big)^l  \leq \frac{\det(A | E)}{\det(A | G) \det(A | H)} \leq C \big( \sin \theta(G, H) \big)^{- l} \, ,
\]
where the constant $C \geq 1$ depends only on $q$.
\end{lem}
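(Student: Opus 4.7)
My plan is to reduce the lemma to two-sided bounds on a volume distortion constant attached to each complemented pair of finite total dimension.  Given any complemented pair $F_1 \oplus F_2 = W$ of total dimension $q$, let $\phi : F_1 \oplus F_2 \to W$ denote the natural linear isomorphism and define $\rho(F_1, F_2) > 0$ by $m_W = \rho(F_1, F_2) \, \phi_*(m_{F_1} \otimes m_{F_2})$; this is well-defined as both sides are Haar measures on $W$.  Since $A|_E$ is injective, $A$ restricts to isomorphisms $A|_G : G \to G'$ and $A|_H : H \to H'$ and is block-diagonal with respect to $E = G \oplus H$ and $E' = G' \oplus H'$.  A direct change-of-variables computation using Fubini and Definition \ref{defn:det} then yields the factorization
\[
\frac{\det(A|E)}{\det(A|G)\det(A|H)} = \frac{\rho(G', H')}{\rho(G, H)},
\]
reducing the lemma to showing
\[
c_q \, (\sin \theta(F_1, F_2))^{\dim F_1} \leq \rho(F_1, F_2) \leq C_q
\]
for constants $0 < c_q \leq C_q$ depending only on $q$; applied to each of $(G,H)$ and $(G',H')$, this gives the claimed two-sided estimate.

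The upper bound on $\rho$ is immediate from the triangle inequality: $B_{F_1} + B_{F_2} \subset 2 B_W$, so (writing $l = \dim F_1$) $\rho(F_1, F_2) \, \omega_l \omega_{q-l} = m_W(B_{F_1} + B_{F_2}) \leq m_W(2 B_W) = 2^q \omega_q$.

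The lower bound is the heart of the proof.  Applying Fubini on $\phi^{-1}(B_W) \subset F_1 \oplus F_2$ yields
\[
\frac{\omega_q}{\rho(F_1, F_2)} = \int_{F_1} m_{F_2}(T_u) \, dm_{F_1}(u), \qquad T_u := \{v \in F_2 : u + v \in B_W\}.
\]
The crucial input is Brunn's concavity theorem: since $\phi^{-1}(B_W)$ is convex, $u \mapsto (m_{F_2}(T_u))^{1/(q-l)}$ is concave on its support.  Central symmetry of $B_W$ gives $T_u = -T_{-u}$ and hence $m_{F_2}(T_u) = m_{F_2}(T_{-u})$; a symmetric concave function attains its maximum at the centre, so $m_{F_2}(T_u) \leq m_{F_2}(T_0) = m_{F_2}(B_W \cap F_2) = \omega_{q-l}$ for every $u$.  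Meanwhile the support of $u \mapsto m_{F_2}(T_u)$ is $\pi_{F_1 \ds F_2}(B_W) \subset |\pi_{F_1 \ds F_2}| \, B_{F_1}$, of $m_{F_1}$-measure at most $|\pi_{F_1 \ds F_2}|^l \omega_l = (\sin \theta(F_1, F_2))^{-l} \omega_l$.  Multiplying the two bounds and substituting gives $\rho(F_1, F_2) \geq (\omega_q / \omega_l \omega_{q-l}) (\sin \theta(F_1, F_2))^l$.

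The main obstacle is arranging the Brunn fibration so that the exponent produced is $l = \dim G$ and not $q$ or $q - l$.  This requires fibering $\phi^{-1}(B_W)$ over $F_1$ (the $l$-dimensional factor), so that the support-of-integration bound $|\pi_{F_1 \ds F_2}|^l \omega_l$ is coupled with the Brunn-derived slice bound $m_{F_2}(T_u) \leq \omega_{q-l}$; a naive approach bounding $B_W \subset |\pi_{F_1 \ds F_2}| B_{F_1} + |\pi_{F_2 \ds F_1}| B_{F_2}$ directly without invoking Brunn produces only the weaker estimate $\rho \geq c_q (\sin \theta(F_1, F_2))^q$, which would not suffice.
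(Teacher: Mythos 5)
Your argument is correct, but note that this paper does not actually prove Lemma \ref{lem:detSplit}: it is quoted verbatim from Lemma 2.15 of \cite{Bl15}, so there is no in-paper proof to compare against; your proposal is a genuinely self-contained alternative. The reduction is clean: the change-of-variables identity $\det(A|E)/(\det(A|G)\det(A|H)) = \rho(G',H')/\rho(G,H)$ is valid (both $m_W$ and $\phi_*(m_{F_1}\otimes m_{F_2})$ are Haar measures, and the block map $A|_G\oplus A|_H$ scales $m_G\otimes m_H$ by $\det(A|G)\det(A|H)$ by Lemma \ref{lem:indVol}(2), with $G'\oplus H'$ direct by injectivity of $A|_E$), and the two-sided bound on $\rho$ does the rest. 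The upper bound uses that $\phi^{-1}(B_{F_1}+B_{F_2})$ is exactly $B_{F_1}\times B_{F_2}$ and $B_{F_1}+B_{F_2}\subset 2B_W$, and the lower bound correctly couples the Brunn concavity principle (central slice of the symmetric convex body $\phi^{-1}(B_W)$ maximizes the fiber volumes, giving $m_{F_2}(T_u)\le\omega_{q-l}$) with the support bound $\pi_{F_1\ds F_2}(B_W)\subset |\pi_{F_1\ds F_2}|\,B_{F_1}$; here one should read $\pi_{F_1\ds F_2}$ as the projection \emph{inside} the finite-dimensional space $W=E$ (resp.\ $E'$), for which the identity $\sin\theta(F_1,F_2)=|\pi_{F_1\ds F_2}|^{-1}$ still holds since Definition \ref{defn:angle} only involves the two subspaces. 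This yields the stated inequalities with $C=\max_{0<l<q}2^q\omega_l\omega_{q-l}/\omega_q$ (say), depending only on $q$ since $l<q$. What your route buys is an explicit, dimension-free-in-structure constant and a transparent explanation of why the exponent is $l$ rather than $q$ (the Brunn fibration over the $l$-dimensional factor), at the price of invoking Brunn--Minkowski, which the volume-comparison machinery of \cite{Bl15} (based on estimates of parallelepiped volumes via successive distances, as quoted after Lemma \ref{lem:gelfandProps}) avoids.
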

\noindent Lemma \ref{lem:detSplit} is standard for determinants arising from inner products; indeed, in that setting, the constant $C$ appearing above may be taken equal to $1$.

\medskip

Our next result concerns the relationship between the determinant and a suitable notion of `singular value'. Let $q \in \N$ and $A \in L(V)$. We define the maximal $q$-dimensional volume growth $V_q(A)$ by
\[
V_q(A) := \sup\{\det (A | E) : E \in \Gc_q(V)\} \, .
\]
When $V$ is a Hilbert space, $V_q(A)$ coincides with the product of the top $q$ singular values for $A$. To achieve an analogous result in our Banach space setting, we employ the following generalization of `singular value':

\begin{defn}
Let $q \in \N$. For $A \in L(V)$, we define the $q$-th Gelfand number $c_q(A)$ by
\[
c_q(A) = \inf\{|A|_F| : F \in \Gc^{q - 1}(V) \} \, .
\]
\end{defn}
\noindent When $V$ is a Hilbert space, $c_q(A)$ coincides with the $q$-th singular value. The Gelfand number is only one example of a possible extension of the concept of singular value to the Banach space setting; see Pietsch \cite{Pi87} for a thorough exposition of such extensions, called $s$-numbers in the literature. The bound $c_k(ABC)\leq |A|c_k(B)|C|$ for linear operators $A, B, C$ on Banach spaces is obvious and will be used without comment.

Less frequently, we will rely on another notion of singular value, that of Kolmogorov number; for $A \in L(V)$, we define
\[
\knum_q(A) := \sup\{m(A|_W) : W \in \Gc_q(V)\} \, .
\]

We now formulate the connection between $V_q$, the Gelfand numbers $c_q$, and the Kolmogorov numbers $\knum_q$.
\begin{lem}\label{lem:gelfandProps}
Let $q \in \N$. There exists a constant $C$, depending only on $q$, for which the following holds for all $A \in L(\Bc)$:
\begin{gather} 
\label{eq:alternativeSnumber} \frac1C \knum_q(A) \leq c_q(A) \leq C \knum_q(A) \, , \\
\label{eq:prodGelfand} \frac1C \, c_{q}(A) V_{q-1}(A) \leq V_q(A) \leq C \, c_{q}(A) V_{q-1}(A) \, .
\end{gather}

\end{lem}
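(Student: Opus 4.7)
My plan is to decouple the two claimed equivalences by routing everything through the volume growth $V_q$ and establishing four parallel estimates. The trivial direction $\knum_q(A) \leq c_q(A)$ is a dimension count: for any $W \in \Gc_q(V)$ and $F \in \Gc^{q-1}(V)$ the composition $W \hookrightarrow V \to V/F$ has image of dimension at most $q - 1$, so $\dim(W \cap F) \geq 1$; a unit vector $v \in W \cap F$ then satisfies $m(A|_W) \leq |Av| \leq |A|_F|$, and taking the infimum over $F$ followed by the supremum over $W$ gives the claim with no constant.

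For the upper bound $V_q(A) \leq C\, c_q(A)\, V_{q-1}(A)$ I would fix $E \in \Gc_q(V)$ almost realizing $V_q(A)$ and $F \in \Gc^{q-1}(V)$ almost realizing $c_q(A)$, producing a unit vector $v \in E \cap F$ that satisfies $|Av| \leq |A|_F|$. Applying Lemma \ref{lem:compExist} inside the $q$-dimensional Banach space $E$ to the $1$-dimensional subspace $\langle v\rangle$ produces a complement $E' \subset E$ with $\sin\theta(\langle v\rangle, E') \geq 1$; the upper half of Lemma \ref{lem:detSplit} with $G = \langle v\rangle$, $H = E'$ and $l = 1$ then yields
\[\det(A|E) \leq C \det(A|E')\, |Av| \leq C\, V_{q-1}(A)\, c_q(A).\]
The analogous bound $V_q(A) \leq C\, \knum_q(A)\, V_{q-1}(A)$ is proved identically, except that the unit vector $v \in E$ is extracted from $m(A|_E) \leq \knum_q(A)$, which is immediate from the definition of the Kolmogorov number.

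For the harder lower bound $V_q(A) \geq C^{-1}\, c_q(A)\, V_{q-1}(A)$ I would manufacture a good $q$-th direction as follows. Take $E' \in \Gc_{q-1}(V)$ almost realizing $V_{q-1}(A)$, set $W' := AE'$, and invoke Lemma \ref{lem:compExist} to obtain a closed complement $F'$ of $W'$ in $V$ for which both $\sin\theta(W', F')$ and $\sin\theta(F', W')$ are bounded below by a constant depending only on $q$. The operator $\pi_{W' \ds F'} \circ A$ has rank at most $q - 1$, so its kernel $V_0 := A^{-1}(F')$ is closed of codimension at most $q - 1$ in $V$ and therefore contains some $F_\ast \in \Gc^{q-1}(V)$. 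Since $|A|_{F_\ast}| \geq c_q(A)$ by definition of the Gelfand number, there is a unit vector $v \in F_\ast$ with $|Av| \geq (1 - \epsilon) c_q(A)$; the constraint $v \in V_0$ forces $Av \in F'$, so $Av \notin W'$ (whence $v \notin E'$), and the inclusion $\langle Av\rangle \subset F'$ gives $\sin\theta(\langle Av\rangle, W') \geq \sin\theta(F', W') \geq C^{-1}$. The lower half of Lemma \ref{lem:detSplit} applied to $E = E' \oplus \langle v\rangle$ with $G = \langle v\rangle$ and $H = E'$ then delivers $\det(A|E) \geq C^{-1}\, V_{q-1}(A)\, c_q(A)$. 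Combining this with the already-established $V_q \leq C\, \knum_q\, V_{q-1}$ produces $c_q(A) \leq C\, \knum_q(A)$, completing the proof.

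The principal obstacle is the third estimate: one must produce a single unit vector $v$ that simultaneously (i) nearly realizes the Gelfand infimum $c_q(A)$ and (ii) has $Av$ making a bounded angle with $AE'$, so that Lemma \ref{lem:detSplit} applies. Constraining $v$ to the preimage $V_0 = A^{-1}(F')$ of a preselected good complement of $AE'$ is precisely the device that makes both conditions compatible; the key observation is that $V_0$ has codimension at most $q - 1$ by a rank count on $\pi_{W' \ds F'} \circ A$, small enough to still admit a $(q-1)$-codimensional subspace along which $c_q(A)$ may be tested.
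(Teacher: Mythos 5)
Your proposal is correct, and it is genuinely different from what the paper does: the paper gives no self-contained argument at all, citing \cite{Bl15} for \eqref{eq:prodGelfand} and deriving \eqref{eq:alternativeSnumber} from Kolmogorov-number arguments in \cite{GoQu14a} together with a parallelepiped-volume formula from \cite{Bl15}. You instead prove everything from the paper's own toolbox: Lemma \ref{lem:compExist} and the determinant-splitting estimate of Lemma \ref{lem:detSplit}, with $\knum_q(A)\leq c_q(A)$ coming from the elementary dimension count and the comparison $c_q \leq C\knum_q$ extracted as a corollary of the two one-sided volume bounds. The genuinely new device is your treatment of the lower bound in \eqref{eq:prodGelfand}: testing the Gelfand infimum only along a $(q-1)$-codimensional subspace contained in $A^{-1}(F')$, where $F'$ is a well-angled complement of $AE'$ supplied by Lemma \ref{lem:compExist}, so that the image $Av$ of the selected direction automatically makes a bounded angle with $AE'$ and the lower half of Lemma \ref{lem:detSplit} applies; the rank count showing $A^{-1}(F')$ has codimension at most $q-1$ is what makes this legitimate, and I verified the angle and injectivity facts you use do follow from the paper's stated results (e.g.\ $\sin\theta(F',W')\geq(1+\sqrt{q-1})^{-1}$ via $\pi_{F'\ds W'}=\Id-\pi_{W'\ds F'}$, and $A|_{E'\oplus\langle v\rangle}$ injective because $Av\in F'\setminus\{0\}$ while $AE'\cap F'=\{0\}$). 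Two small points you should make explicit: Lemma \ref{lem:detSplit} assumes $A|_E$ injective, so say in each application why this holds (in the upper bounds it follows from $\det(A|E)>0$, in the lower bound from the transversality just mentioned); and the final division by $V_{q-1}(A)$ needs $0<V_{q-1}(A)<\infty$ — finiteness is clear from $\det(A|E')\leq |A|^{q-1}$, while if $V_{q-1}(A)=0$ then $A$ has rank at most $q-2$, its kernel contains a closed subspace of codimension exactly $q-1$ and every $q$-dimensional subspace meets the kernel, so $c_q(A)=\knum_q(A)=0$ and \eqref{eq:alternativeSnumber} holds trivially.
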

\noindent Item 2 in Lemma \ref{lem:gelfandProps} is proved in \cite{Bl15}. Item 1 follows from arguments in \cite{GoQu14a} (in \cite{GoQu14a}, the notation $F_q$ is used for the $q$-th Kolmogorov number) and the fact (proved in \cite{Bl15}) that for a $q$-dimensional subspace $E$ spanned by vectors $v_1, \cdots, v_q$, we have
\[
m_E  P[v_1, \cdots, v_q]  \approx |v_q| \cdot \prod_{i = 1}^q d(v_i, \langle v_{i + 1}, \cdots, v_q \rangle ) \, ;
\]
here, $P[v_1, \cdots, v_q] = \{a_1 v_1 + \cdots + a_q v_q : a_i \in [0,1], 1 \leq i \leq q\}$ is the unit parallelogram spanned by $v_1, \cdots, v_q$, and $\approx$ denotes `equal up to a multiplicative constant'. 

\medskip

We finish this part with a formulation of the local Lipschitz property of $\det$. Although technical, Lemma \ref{lem:detReg} is crucially important.
\begin{lem}[Proposition 2.15 in \cite{BlYo15}]\label{lem:detReg}
For any $q \geq 1$ and any $M > 1$ there exist $L_2, \d_2 > 0$ with the following properties. Let $V, V'$ be Banach spaces,  $A_1, A_2 \in L(V, V')$, and $E_1, E_2 \subset V$ be $q$-dimensional subspaces. Assume that 
\begin{gather*}
|A_i|,~ |(A_i|_{E_i})^{-1}| \leq M \quad i=1,2 \, ,\\
|A_1 - A_2|, ~d_H(E_1, E_2) \leq \d_2 \, .
\end{gather*}
Then, we have the estimate
\begin{equation} \label{regularity}
\left| \log \frac{\det(A_1 | E_1)}{\det (A_2 | E_2)} \right| \leq L_2 (|A_1 - A_2| + d_H(E_1, E_2)) \, .
\end{equation}
\end{lem}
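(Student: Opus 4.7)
The plan is a two-step triangle-inequality decomposition
\[
\left|\log\frac{\det(A_1|E_1)}{\det(A_2|E_2)}\right|\leq\left|\log\frac{\det(A_1|E_1)}{\det(A_1|E_2)}\right|+\left|\log\frac{\det(A_1|E_2)}{\det(A_2|E_2)}\right|,
\]
bounding the first summand by $C\,d_H(E_1,E_2)$ (moving only the subspace while $A_1$ is fixed) and the second by $C\,|A_1-A_2|$ (moving only the operator while $E_2$ is fixed), with $C=C(q,M)$.

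The central device is a pair of linear ``transport'' maps between the subspaces in question, each close to the identity. For the first summand I would take an Auerbach basis $v_1,\dots,v_q$ of $E_1$ (norm one with unit-norm biorthogonal functionals), pick $w_i\in E_2$ with $|v_i-w_i|\leq 2 d_H(E_1,E_2)$ using the definition of $\Gap$, and let $T\colon E_1\to E_2$ be the linear map with $Tv_i=w_i$. The biorthogonality forces $|Tv-v|\leq C_q d_H(E_1,E_2)\,|v|$ for every $v\in E_1$, so (provided $\delta_2$ is small) $T$ is a bijection and $T(B_{E_1})$ is sandwiched between $(1\pm C_q d_H)B_{E_2}$. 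Since $m_{E_i}(B_{E_i})=\omega_q$ by Lemma~\ref{lem:indVol}, the scaling factor
\[
\mu(T):=\frac{m_{E_2}(T(S))}{m_{E_1}(S)}
\]
(well-defined for any $S\subset E_1$ of positive $m_{E_1}$-measure by uniqueness of translation-invariant measures) lies in $[(1-C_q d_H)^q,(1+C_q d_H)^q]$. The same volume-sandwich argument applied to $T_\ast:=A_1 T (A_1|_{E_1})^{-1}\colon A_1E_1\to A_1E_2$, which satisfies $|T_\ast u-u|\leq M^2 C_q d_H\,|u|$ for $u\in A_1E_1$ by $|A_1|,|(A_1|_{E_1})^{-1}|\leq M$, yields $\mu(T_\ast)\in[1-O(d_H),1+O(d_H)]$.

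The first summand is then dispatched using the basis-free identity $\det(A|E)=m_{AE}(P[Av_1,\dots,Av_q])/m_E(P[v_1,\dots,v_q])$ (valid for any basis by Lemma~\ref{lem:indVol}(2)), taking basis $v_\bullet$ for $E_1$ and $w_\bullet=Tv_\bullet$ for $E_2$. Since $T$ sends $P[v_1,\dots,v_q]$ to $P[w_1,\dots,w_q]$ and $T_\ast$ sends $P[A_1v_1,\dots,A_1v_q]$ to $P[A_1w_1,\dots,A_1w_q]$, the ratio of determinants reduces to $\mu(T)/\mu(T_\ast)$, whose log is $O(d_H(E_1,E_2))$. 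For the second summand I would use a common Auerbach basis $w_\bullet$ of $E_2$ for both determinants; the ratio collapses to $m_{A_1E_2}(P[A_1w_\bullet])/m_{A_2E_2}(P[A_2w_\bullet])$, and is handled by the transport $S\colon A_1E_2\to A_2E_2$, $A_1v\mapsto A_2v$. After propagating $|(A_1|_{E_1})^{-1}|\leq M$ to $|(A_1|_{E_2})^{-1}|\leq 2M$ via the near-identity $T$, one obtains $|Sw-w|\leq 2M|A_1-A_2|\,|w|$ for $w\in A_1E_2$, whence $\mu(S)=1+O(|A_1-A_2|)$ by the same argument.

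The main technical subtlety is that the four induced volumes $m_{E_1},m_{E_2},m_{A_1E_1},m_{A_1E_2}$ are independently normalized, each by its own unit ball; the role of the transport maps (and the Auerbach bases behind them) is precisely to relate these independent normalizations up to a multiplicative factor of $1+O(d_H)$. A subsidiary care-point is propagating the injectivity-with-uniform-inverse hypothesis from the given pairs $(A_i,E_i)$ to the auxiliary configurations such as $A_1|_{E_2}$ and the transports $T,T_\ast,S$, all of which is absorbed into the choice of $\delta_2$ at the outset.
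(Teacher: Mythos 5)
Your argument is correct, and it is worth noting that the paper itself offers no proof of Lemma~\ref{lem:detReg} to compare against: the statement is imported wholesale as Proposition 2.15 of \cite{BlYo15}. Judged on its own terms, your proof is a sound, self-contained derivation from the material the paper does develop (Lemma~\ref{lem:indVol} and the definition of $\det(A|E)$). The two pillars hold up under scrutiny: the scaling factor $\mu$ of each transport map is well defined by Lemma~\ref{lem:indVol}(2) because the pulled-back measure is translation-invariant and non-zero once the map is injective, and the ball-sandwich $(1-\epsilon)B\subset T(B)\subset(1+\epsilon)B$ together with parts (1) and (3) of that lemma pins $\mu$ to $[(1-\epsilon)^q,(1+\epsilon)^q]$; the basis-free identity $\det(A|E)=m_{AE}(P[Av_\bullet])/m_E(P[v_\bullet])$ follows from the same uniqueness statement, and your factorization of the first ratio as $\mu(T)/\mu(T_\ast)$ (using $T_\ast A_1v_i=A_1Tv_i$) and of the second as $\mu(S)^{-1}$ is exact. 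The points requiring care are exactly the ones you flag and they do go through: the Auerbach basis gives $|Tv-v|\le 2q\,d_H(E_1,E_2)|v|$, injectivity of $A_1|_{E_2}$ and the bound $|(A_1|_{E_2})^{-1}|\le 2M$ are consequences of the near-identity estimates once $\delta_2$ is small compared with $(qM^2)^{-1}$, and $|\log(1\pm\epsilon)|\le 2\epsilon$ converts the multiplicative bounds into the advertised Lipschitz estimate with $L_2=L_2(q,M)$. So this constitutes a legitimate alternative to citing \cite{BlYo15}; whether it coincides with the argument given there cannot be checked from the present paper, but nothing in your route depends on anything beyond \S\ref{sec:geometry}.
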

%\noindent Lemma \ref{lem:detReg} is proved in \cite{BlYo15}.

\subsection{A version of the SVD for Banach space operators}

We pass now to the following result, a version of the Singular Value Decomposition for maps of Banach spaces. 

\begin{prop}\label{prop:genSVD}
For any $k \in \N$, $r \in (0,1)$, there exists a constant $D = D(k, r) > 1$ with the following properties.

Let $V, V'$ be Banach spaces, $A \in L(V, V')$, and $E \in \Gc_k(V)$ for which $E \cap \ker A = \{0\}$ and 
\[
\det(A | E) \geq r V_k(A) \, .
\]

Then, there exists $k$-codimensional subspaces $F\subset V, F' \subset V$ complementing $E$ and $E' := A E$, respectively, such that
\begin{enumerate}
\item $A F \subset F'$,
\item $|\pi_{E \ds F}|, |\pi_{E' \ds F'}| \leq D$,
\item and $|A|_F| \leq D c_{k + 1}(A)$.
\end{enumerate}
\end{prop}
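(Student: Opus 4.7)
My plan is to build $F' \in \Gc^k(V')$ first as a complement of $E' := AE$ in $V'$, and then take $F := A^{-1}(F') = \{v \in V : Av \in F'\}$. For the initial choice of $F'$, I apply Lemma \ref{lem:compExist} to obtain a closed $k$-codimensional complement of $E'$ with $|\pi_{E' \ds F'}|$ bounded purely in terms of $k$. Since the bounded map $\pi_{E'\ds F'}\circ A : V \to E'$ restricts on $E$ to the bijection $A|_E : E \to E'$, it is surjective onto the $k$-dimensional space $E'$, so $F = \ker(\pi_{E'\ds F'}\circ A)$ is closed and of codimension exactly $k$. Any $v \in F \cap E$ satisfies $Av \in E' \cap F' = \{0\}$, forcing $v = 0$ by injectivity of $A|_E$; thus $V = E \oplus F$, and $AF \subset F'$ by construction.

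To bound $|A|_F|$, I apply Lemma \ref{lem:detSplit} to the $(k+1)$-dimensional subspace $E + \langle v\rangle$ for each $v \in F$ with $Av \neq 0$ (note $v \notin E$, else $Av \in E' \cap F' = \{0\}$, contradicting the injectivity of $A|_E$). Taking $G = E$ and $H = \langle v\rangle$, the lemma yields
\[
\det(A | E + \langle v \rangle) \;\geq\; C_0^{-1}\,\det(A|E)\,\frac{|Av|}{|v|}\,\sin\theta\big(E',\langle Av \rangle\big)^k,
\]
and since $Av \in F'$, we have $\sin\theta(E',\langle Av \rangle) \geq \sin\theta(E',F') = |\pi_{E'\ds F'}|^{-1}$, which is controlled. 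Combined with the upper bound $\det(A | E + \langle v \rangle) \leq V_{k+1}(A) \leq (C_1/r)\, c_{k+1}(A)\,\det(A|E)$ obtained from Lemma \ref{lem:gelfandProps} and the hypothesis $\det(A|E) \geq r V_k(A)$, this produces $|A|_F| \leq D(k,r)\, c_{k+1}(A)$.

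The main obstacle is bounding $|\pi_{E \ds F}|$ by a constant depending only on $k$ and $r$. The identity $\pi_{E\ds F}(v) = (A|_E)^{-1}\pi_{E'\ds F'}(Av)$ gives only the crude estimate $|\pi_{E\ds F}| \leq |A| \cdot |\pi_{E'\ds F'}| / m(A|_E)$, and iterating Lemma \ref{lem:gelfandProps} on $A|_E$ yields $\det(A|E) \leq C^{k-1} m(A|_E) \prod_{i=1}^{k-1} c_i(A)$, which combined with the hypothesis produces $m(A|_E) \geq r C^{-2(k-1)} c_k(A)$; even after this improvement there remains an uncontrolled factor of $c_1(A)/c_k(A)$. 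To remove it, $F'$ must be chosen not as an arbitrary complement of $E'$, but in a way adapted to $A|_E$. Specifically, I would use the hypothesis and Lemma \ref{lem:detSplit} to extract a basis $\{v_i\}_{i=1}^k$ of $E$ with $|v_i| = 1$ and $d(v_i,\langle v_{i+1},\ldots,v_k\rangle) \geq \alpha(k,r) > 0$, then define $F'$ as the intersection of kernels of Hahn--Banach extensions of the functionals dual to $\{Av_i/|Av_i|\} \subset E'$. Verifying the projection bound for this adapted construction --- balancing the near-orthogonality of the $v_i$ against the sizes of the $|Av_i|$ and using the Gelfand/volume-growth relations of Lemma \ref{lem:gelfandProps} to close the loop --- is the main technical content of the proof.
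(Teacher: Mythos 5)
Your construction of $F'$ by Lemma \ref{lem:compExist} and $F := A^{-1}(F')$, the verification that $V = E \oplus F$ with $AF \subset F'$, and the bound on $|A|_F|$ via Lemma \ref{lem:detSplit} applied to $E \oplus \langle v\rangle$ (using $\sin\theta(E',\langle Av\rangle) \geq \sin\theta(E',F')$ together with $V_{k+1}(A) \lesssim c_{k+1}(A)V_k(A) \leq r^{-1}c_{k+1}(A)\det(A|E)$) are all sound, and indeed the paper's treatment of Items 1 and 3 runs along the same lines. But the proposal does not prove Item 2, and Item 2 is precisely the crux of the proposition (the paper's footnote says as much: controlling $|\pi_{E\ds F}|$ is what forces the whole structure of the argument). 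You correctly diagnose that the one-shot choice of $F'$ leaves an uncontrolled factor of order $c_1(A)/c_k(A)$, but your proposed fix --- choose a well-spread basis $\{v_i\}$ of $E$ and take $F'$ to be the intersection of kernels of Hahn--Banach extensions $\ell_i$ of the functionals dual to $\{Av_i/|Av_i|\}$ --- is left unverified, and as described it does not close the gap: with that construction $\pi_{E\ds F}v = \sum_i \ell_i(Av)\,v_i/|Av_i|$, and the only available estimate is $|\ell_i(Av)| \leq |\ell_i|\,|A|\,|v|$, so each term is still bounded only by $c_1(A)/|Av_i| \gtrsim c_1(A)/(r\,c_k(A))$, which is not a function of $(k,r)$. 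Near-orthogonality of the $v_i$ controls $|\ell_i|$ and $|\pi_{E'\ds F'}|$, but it does nothing to make $|\ell_i\circ A|$ comparable to $|Av_i|$ rather than to $|A|$.

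What is missing is the mechanism the paper supplies by its iterative, one-dimension-at-a-time procedure (Lemma \ref{lem:oneStep}): at step $i$ one works with the restricted operator $A|_{F_{i-1}}$, chooses $v_i \in E_i = E_{i-1}\cap F_{i-1}$ with $|Av_i| = |A|_{E_i}|$, and builds the rank-one projection inside $F_{i-1}$, so that the relevant operator norm in the projection bound is $c_1(A|_{F_{i-1}})$, not $c_1(A)$; the volume-growth bookkeeping \eqref{eq:volumeGrowControl}--\eqref{eq:Elvolume} guarantees $|Av_i| \gtrsim r^{2^{i-1}} c_1(A|_{F_{i-1}})$, whence $|\pi_{\langle v_i\rangle \ds F_i}| \lesssim r^{-2^{i-1}}$ and, composing, $|\pi_{E\ds F}| \leq C_k\,r^{-(2^k-1)}$. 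Equivalently, in your language, each functional $\ell_i$ must be chosen to vanish on a nested codimension-$(i-1)$ subspace on which the norm of $A$ has already dropped to a scale comparable with $|Av_i|$; a simultaneous Hahn--Banach extension from $E'$ to all of $V'$ cannot see this. Since you explicitly defer this verification ("the main technical content of the proof"), the proposal as it stands has a genuine gap at the statement's hardest point.
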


Before going on to the proof, we discuss the meaning of Proposition \ref{prop:genSVD}. As a special case, let $A \in L(V, V')$ be of rank $\geq k$ and let $E \in \Gc_k(V)$ be such that $\det(A | E)$ is approximately $V_k(A)$, i.e., $A|_E$ realizes nearly all possible $k$-dimensional volume growth of $A$. It is not hard to show (see \eqref{eq:ckRealizedbyE} below) that for such a subspace $E$ we have $m(A|_E) = \min\{|A v| : v \in E, |v| = 1\} \geq C_k^{-1} c_k(A)$, where $C_k > 1$ depends only on $k$. 

So, the subspace $E$ can be thought of as the Banach space analogue of the top $k$-dimensional singular value subspace of a compact operator on a Hilbert space. To complete the analogy, one would hope that there exist $k$-codimensional complements $F, F'$ to $E, E' := A E$, respectively, for which (a) $A F \subset F'$, (b) $|\pi_{E \ds F}|$ and $|\pi_{E' \ds F'}|$ are controlled\footnote{We note that controlling $|\pi_{E \ds F}|$ is somehow the most difficult part of this proof. Indeed, this control is responsible for the necessity of the tedious procedure of paring off each dimension one at a time.}, and (c) $|A|_F|$ is controlled by $c_{k + 1}(A)$. Proposition \ref{prop:genSVD} yields all this information.

\begin{cor}[Singular Value Splitting for Banach Space Operators]\label{cor:SVD}
For any $k \in \N$ there exists a constant $C_k > 1$ with the following property. Let $A \in L(V, V')$ have rank $\geq k$. Let $E \subset V$ be such that $det(A | E) \geq \frac12 V_k(A)$. Then, there exists a complement $F \in \Gc^k(V)$ for $E$ and a complement $F' \in \Gc^k(V')$ for $E' := A E$ such that
\begin{itemize}
%\item $AE = E'$, $A F \subset F'$,
\item $A F \subset F'$,
\item $m(A|_E) \geq C_k^{-1} c_k(A)$, $|A|_F| \leq C_k c_{k + 1}(A)$, and
\item $|\pi_{E \ds F}|, |\pi_{E' \ds F'}| \leq C_k$ .
\end{itemize}
\end{cor}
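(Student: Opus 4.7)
The plan is to apply Proposition \ref{prop:genSVD} directly with $r = 1/2$ to obtain essentially all the desired conclusions, and then to establish the single additional lower bound $m(A|_E) \geq C_k^{-1} c_k(A)$ as a separate step. Since $A$ has rank at least $k$, we have $V_k(A) > 0$, so the hypothesis $\det(A | E) \geq \tfrac{1}{2} V_k(A) > 0$ forces $A|_E$ to be injective and hence $E \cap \ker A = \{0\}$. Proposition \ref{prop:genSVD} then supplies complements $F \in \Gc^k(V)$ for $E$ and $F' \in \Gc^k(V')$ for $E' := AE$ with $A F \subset F'$, with $|\pi_{E \ds F}|, |\pi_{E' \ds F'}| \leq D(k, \tfrac12)$, and with $|A|_F| \leq D(k, \tfrac12) \cdot c_{k+1}(A)$. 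This already furnishes three of the four items claimed.

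For the remaining bound on $m(A|_E)$, I would choose $v \in E$ of unit norm realizing $m(A|_E)$ (using compactness of the unit sphere in the finite-dimensional space $E$), and use Lemma \ref{lem:compExist} inside $E$ to pick a $(k-1)$-dimensional complement $G \subset E$ to $\langle v \rangle$ with $\sin \theta(\langle v \rangle, G) = 1$ (applying the lemma to the $1$-dimensional subspace $\langle v \rangle$ of the $k$-dimensional Banach space $E$). Applying Lemma \ref{lem:detSplit} with $l = 1$ to the splitting $E = \langle v \rangle \oplus G$ yields
\[
\det(A | E) \leq C_1(k) \cdot \det(A | \langle v \rangle) \cdot \det(A | G) = C_1(k) \cdot m(A|_E) \cdot \det(A | G).
\]
Combining this with $\det(A | G) \leq V_{k-1}(A)$, the hypothesis $\det(A | E) \geq \tfrac{1}{2} V_k(A)$, and inequality \eqref{eq:prodGelfand} of Lemma \ref{lem:gelfandProps} (which gives $V_k(A) \geq C_2(k)^{-1} c_k(A) V_{k-1}(A)$), I would obtain
\[
m(A|_E) \geq \frac{V_k(A)}{2 C_1 V_{k-1}(A)} \geq \frac{c_k(A)}{2 C_1 C_2}.
\]

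Taking $C_k := \max\{D(k, \tfrac12),\; 2 C_1 C_2\}$ then completes the argument. The main substantive obstacle has already been overcome inside Proposition \ref{prop:genSVD}; this corollary is essentially a repackaging of its conclusions in the special case $r = 1/2$, together with the straightforward volume comparison above, so beyond tracking the $k$-dependent constants I do not anticipate any serious difficulty.
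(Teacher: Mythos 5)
Your proposal is correct and follows essentially the same route as the paper: both apply Proposition \ref{prop:genSVD} with $r=\tfrac12$ (noting that $\det(A|E)>0$ forces $E\cap\ker A=\{0\}$) to obtain $F$, $F'$, the equivariance $AF\subset F'$, the projection bounds, and the bound on $|A|_F|$. The only difference is the lower bound $m(A|_E)\geq C_k^{-1}c_k(A)$: the paper simply cites the estimate \eqref{eq:ckRealizedbyE} already established inside the proof of Proposition \ref{prop:genSVD} via the Gelfand-number product comparison, whereas you re-derive it independently by splitting $E=\langle v\rangle\oplus G$ with a well-chosen complement (Lemma \ref{lem:compExist}) and combining Lemma \ref{lem:detSplit} with \eqref{eq:prodGelfand} -- an equally valid argument (the degenerate case $k=1$ follows directly from $\det(A|E)=|Av|$ and \eqref{eq:prodGelfand}).
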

\begin{proof}
Fix $E$ of dimension $k$ for which $\det( A | E) \geq \frac12 V_k(A)$ and apply Proposition \ref{prop:genSVD}. That $m(A|_E) \geq C_k^{-1} c_k(A)$ is deduced in the proof of Proposition \ref{prop:genSVD}; see \eqref{eq:ckRealizedbyE}.
\end{proof}

\medskip

We note that standard formulations of the Singular Value Decomposition for Hilbert spaces involves not just a splitting, but also an orthonormal \emph{basis} $\{v_i\}_{i = 1}^k$ of the upper subspace realizing each of the top $k$ singular values $\sigma_1(A), \cdots, \sigma_k(A)$ (in that setting, each $v_i$ is a normalized eigenvector for $A^* A$ with eigenvalue $\sigma_i(A)^2$). Although this aspect is not a part of our formulations, we point out that a basis of vectors $\{v_i\}_{i =1}^k$ is constructed during the course of the proof of Proposition \ref{prop:genSVD} for which (i) $|A v_i| \approx c_i(A)$ for each $i \leq k$, and (ii) the vectors $v_i$ are not too inclined towards each other (in the sense of the minimal angle $\theta$ defined in Definition \ref{defn:angle}).

\medskip

More generally, Proposition \ref{prop:genSVD} allows one to find such an $F$ even when, perhaps, $\det(A|E)$ realizes only some small proportion of the $k$-dimensional volume growth of $A$. The dividend of Proposition \ref{prop:genSVD} in this case is that one can still realize a version of the singular value splitting, so long as one is willing to accept a controlled amount of `error' encapsulated by the constant $D$.

%during the course of the proof, we show that m(A|_E) \geq D^{-1} c_k(A)

The proof of Proposition \ref{prop:genSVD} is iterative, and proceeds by paring off one dimension at a time from the subspace $E$. A single iteration of this procedure is formulated below.

\begin{lem}\label{lem:oneStep}
Let $V, V'$ be Banach spaces, $A \in L(V, V')$ a nonzero operator, and $v$ a unit vector $\notin \ker A$. Then, there exist closed complements $G, G'$ to $\langle v \rangle, \langle A v \rangle$, respectively, such that
\begin{enumerate}
\item $A G \subset G'$,
\item $|\pi_{\langle v \rangle \ds G}| \leq  c_1(A)/|A v|,$ and $|\pi_{\langle A v \rangle \ds G'}| = 1$,
\item and for any $l \in \N$, 
\begin{align}\label{eq:volGrowRelation}
C^{-1} \frac{V_{l + 1}(A)}{c_1(A)} \leq V_{l }(A|_G) \leq C \frac{V_{l + 1}(A)}{|A v|} \, ,
\end{align}
where $C$ is a constant depending on $l$ alone.
\end{enumerate}
\end{lem}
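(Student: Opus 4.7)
The plan is to construct both complements simultaneously via a single Hahn--Banach functional. Since $Av \neq 0$, I would apply the Hahn--Banach theorem to find $\ell \in (V')^*$ with $|\ell| = 1$ and $\ell(Av) = |Av|$, then define $G' := \ker \ell$ and $G := \ker(\ell \circ A)$. Both $\ell$ and $\ell \circ A$ are nonzero linear functionals (the latter because $(\ell \circ A)(v) = |Av| > 0$), so $G'$ and $G$ are closed codimension-one subspaces complementing $\langle A v \rangle$ and $\langle v \rangle$ respectively. The inclusion $A G \subset G'$ in (1) is then immediate from the construction: if $w \in G$ then $\ell(A w) = 0$, i.e.\ $A w \in G'$.

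For (2), a direct calculation gives $\pi_{\langle A v \rangle \ds G'}(w') = (\ell(w')/|Av|)\, A v$ and $\pi_{\langle v \rangle \ds G}(w) = (\ell(Aw)/|Av|)\, v$. Since $v$ is a unit vector, the first has operator norm equal to $|\ell| = 1$ (with equality attained at $w' = Av/|Av|$), while the second has operator norm bounded above by $|\ell \circ A|/|Av| \leq |\ell|\,|A|/|Av| = c_1(A)/|Av|$.

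For (3) I would handle the two inequalities separately. For the upper bound, given any $l$-dimensional $E \subset G$ with $A|_E$ injective, I would apply Lemma \ref{lem:detSplit} to the splitting $\langle v \rangle \oplus E$ of total dimension $l + 1$, obtaining
\[
\det(A | E) \leq \frac{C\, \det(A | \langle v \rangle \oplus E)}{|Av|\, \sin \theta(\langle A v \rangle, A E)}.
\]
Since $A E \subset G'$ we have $\sin\theta(\langle A v \rangle, A E) \geq \sin\theta(\langle A v \rangle, G') = |\pi_{\langle A v \rangle \ds G'}|^{-1} = 1$, so taking the supremum over $E$ yields $V_l(A|_G) \leq C V_{l+1}(A)/|Av|$. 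For the lower bound, any closed codimension-$(i-1)$ subspace of $G$ is codimension-$i$ in $V$, so directly from the definition of Gelfand numbers, $c_i(A|_G) \geq c_{i+1}(A)$. Iterating Lemma \ref{lem:gelfandProps}(2) applied to $A|_G$ gives $V_l(A|_G) \geq C^{-1} \prod_{i=1}^l c_i(A|_G) \geq C^{-1} \prod_{i=2}^{l+1} c_i(A)$, while the same lemma applied to $A$ itself gives $V_{l+1}(A) \leq C c_1(A) \prod_{i=2}^{l+1} c_i(A)$. Dividing produces the desired bound $V_l(A|_G) \geq C^{-1} V_{l+1}(A)/c_1(A)$.

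I expect the main conceptual obstacle to lie in the construction rather than any of the subsequent estimates: choosing $G$ by Hahn--Banach using $v$ alone would not automatically give either $A G \subset G'$ or the required bound on $|\pi_{\langle v \rangle \ds G}|$, so the key move is to \emph{pull back} the functional defining $G'$ through $A$, which forces both properties essentially for free. Once this observation is in place, (1) and (2) reduce to short computations and (3) follows from the already-established dictionary between the volume function $V_l$ and the Gelfand numbers.
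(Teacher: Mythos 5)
Your proof is correct, and the heart of it -- items (1) and (2) -- is the same as the paper's: the paper takes a norm-one-projection complement $G'$ to $\langle Av\rangle$ (via Lemma \ref{lem:compExist}, which for a one-dimensional subspace is exactly your Hahn--Banach functional) and then sets $G:=\{w: Aw\in G'\}=\ker(\ell\circ A)$, with the same formula $\pi_{\langle v\rangle\ds G}=(A|_{\langle v\rangle})^{-1}\circ\pi_{\langle Av\rangle\ds G'}\circ A$ behind your norm bound. Your upper bound in (3) is also essentially the paper's (they take a near-maximizing $L\subset G$ and apply Lemma \ref{lem:detSplit} to $\langle v\rangle\oplus L$, using $|\pi_{\langle Av\rangle\ds AL}|=1$, which is your $\sin\theta(\langle Av\rangle,AE)\geq 1$); just note in passing that $A$ is automatically injective on $\langle v\rangle\oplus E$ because $Av\notin G'$ while $AE\subset G'$, so Lemma \ref{lem:detSplit} applies. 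Where you genuinely diverge is the lower bound in (3): the paper picks an $(l+1)$-dimensional $H$ with $\det(A|H)\geq\tfrac12 V_{l+1}(A)$, intersects it with $G$ to get an $l$-dimensional $J$, chooses a one-dimensional complement $K\subset H$ with $|\pi_{K\ds J}|=1$, and applies Lemma \ref{lem:detSplit}, bounding $\det(A|K)\leq c_1(A)$; you instead use the codimension-shift inequality $c_i(A|_G)\geq c_{i+1}(A)$ together with two applications of \eqref{eq:prodGelfand}. Both give constants depending only on $l$; your version avoids the auxiliary subspaces $H,J,K$ (and the paper's ``assume rank $\geq l+1$'' reduction) at the cost of leaning more heavily on Lemma \ref{lem:gelfandProps}, whereas the paper's argument stays closer to the volume-splitting geometry it has already set up.
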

%\lesssim, \gtrsim notation in this Lemma
\begin{proof}[Proof of Lemma \ref{lem:oneStep}]
We begin by using Lemma \ref{lem:compExist} to choose a closed complement $G'$ to $A v$ for which $|\pi_{\langle A v \rangle \ds G'}| = 1$. Defining $G := \{w \in V : A w \in G'\}$, we claim that $G$ is a complement to $\langle v \rangle$ for which
\begin{align}\label{eq:projectionFormtoshowComplement}
\pi_{\langle v \rangle \ds G} = \big( A|_{\langle v \rangle } \big)^{-1} \circ \pi_{\langle A v \rangle \ds G'} \circ A \, .
\end{align}
Indeed, the operator on the RHS is well-defined on $V$, bounded, has image $\langle v \rangle$, and acts as the identity on $\langle v \rangle$, and so gives rise to a well-defined projection operator onto $\langle v \rangle$. It now remains to check that the operator defined in \eqref{eq:projectionFormtoshowComplement} has kernel $G$, which we leave to the reader. Note that by \eqref{eq:projectionFormtoshowComplement}, we have the estimate $|\pi_{\langle v \rangle \ds G}| \leq c_1(A)/|A v|$. 

It remains to check \eqref{eq:volGrowRelation} for each $l \in \N$. Observe that without loss, we may assume that the rank of $A$ is $\geq l + 1$: if not, then $A|_G$ must have rank $\leq l - 1$, and so \eqref{eq:volGrowRelation} will hold vacuously. For the remainder of the proof, $l \in \N$ is fixed, $A$ is assumed to have rank $\geq l + 1$, and the symbols $\lesssim, \gtrsim$ mean `$\leq, \geq$ (respectively) up to a multiplicative constant depending only on $l$'.

For the left-hand inequality, let $H \subset V$ be an $(l + 1)$-dimensional subspace for which $\det(A | H) \geq \frac12 V_{l + 1}(A)$. Observe that $H \cap G$ has dimension $\geq l$, and so fix any $l$-dimensional subspace $J \subset H \cap G$ and any $1$-dimensional complement $K \subset H$ to $J$ for which $|\pi_{K \ds J}| = 1$ as an operator on $H = J \oplus K$ (by Lemma \ref{lem:compExist}). Applying Lemma \ref{lem:detSplit} to $\det(A | H) = \det(A | K \oplus J)$, we estimate
\[
V_{l + 1}(A) \leq 2 \det(A | H) \lesssim |\pi_{K \ds J}| \det(A | K) \det(A | J) \leq c_1(A)  V_{l}(A|_G) \, . 
\]

For the right-hand inequality, let $L \subset G$ be an $l$-dimensional subspace for which $\det(A | L) \geq \frac12 V_l(A|_G)$. In particular, $A L$ has dimension $l$, and $|\pi_{\langle A v \rangle \ds A L}| = |\pi_{\langle A v \rangle \ds G'} |_{\langle A v \rangle \oplus A L}| = 1$, regarding $\pi_{\langle A v \rangle \ds A L}$ as an operator $\langle v \rangle \oplus L \to \langle A v \rangle \oplus A L$. We now apply Lemma \ref{lem:detSplit} to $\det(A | \langle v \rangle \oplus L)$, estimating
\[
V_{l + 1}(A) \geq \det(A | \langle v \rangle \oplus L) \gtrsim |\pi_{\langle A v \rangle \ds A L}|^{-1} \det(A | \langle v \rangle) \det(A | L)
\gtrsim |A v| \, V_l(A|_G) \, . \qedhere
\]
\end{proof}

\begin{proof}[Proof of Proposition \ref{prop:genSVD}]
Throughout this proof, $k$ is fixed, and $\lesssim, \gtrsim$ mean `$\leq, \geq$ (respectively) up to a multiplicative constant depending only on $k$'.

From the hypothesis $r V_k(A) \leq \det(A | E) \leq V_k(A)$ and Lemma \ref{lem:gelfandProps}, we have the estimate
\[
r \prod_{i = 1}^k c_i(A) \lesssim \prod_{i =1 }^k c_i(A|_E) \lesssim \prod_{i = 1}^k c_i(A) \, ,
\]
and since $c_i(A|_E) \leq c_i(A)$ for each $i \leq k$, it follows that $r c_i(A) \lesssim c_i(A|_E) \leq c_i(A)$. Note that in particular,
\begin{align}\label{eq:ckRealizedbyE}
m(A|_E) : = \min\{|A v| : v \in E, |v| = 1\} = c_k(A|_E) \gtrsim r c_k(A) \, ,
\end{align}
and
\begin{align}\label{eq:gelfandEstimate}
|A|_E| = c_1(A|_E) \gtrsim r c_1(A) \, .
\end{align}

We now give the first step of the induction. Fixing a unit vector $v_1 \in E$ for which $|A v_1| = |A|_E| = c_1(A|_E)$, we shall apply Lemma \ref{lem:oneStep}, obtaining complements $F_1, F_1'$ to $ \langle v_1\rangle,  \langle w_1 \rangle$, where $w_1 := |A v_1|^{-1} A v_1$. In particular, these complements satisfy
\begin{gather*}
\frac{V_k(A)}{c_1(A)} \lesssim V_{k - 1}(A|_{F_1}) \lesssim \frac{V_k(A)}{|A v_1|} \, , \\
|\pi_{\langle v_1 \rangle \ds F_1 }| \lesssim r^{-1}, \quad \text{ and } \quad |\pi_{\langle w_1 \rangle \ds F_1'}| = 1 \, ,
\end{gather*}
where in the second line we have used that $|A v_1| \gtrsim r c_1(A)$ by construction. This ends the first step of the iteration.

At this point, we define $E_2 = E \cap F_1$ (which has dimension $k - 1$). We now derive a lower bound for $\det(A | E_2)$ using Lemma \ref{lem:detSplit}:
\begin{align*}
\det(A | E) = \det( A | \langle v_1 \rangle \oplus E_2) \lesssim |\pi_{\langle v_1 \rangle \ds E_2}| |A v_1|  \det(A | E_2) \lesssim r^{-1} |A v_1| \det(A | E_2) \, .
\end{align*}
On the other hand, for $\det(A| E)$ we have the lower bound
\begin{align*}
\det(A | E) \geq r V_k(A) \gtrsim r |A v_1| V_{k - 1}(A|_{F_1}) \, ,
\end{align*}
and so we conclude that
\begin{align}\label{eq:E2volGrow}
\det(A | E_2) \gtrsim r^2 V_{k - 1}(A|_{F_1}) \, .
\end{align}

We now describe the induction. For notational convenience, we write $F_0 := V, F_0'  := V'$ and $E_1 := E$. The induction hypothesis following the $l $-th step is as follows. We have nested sequences of subspaces (i) $V = F_0 \supset F_1 \supset \cdots \supset F_l$, (ii) $V' = F_0' \supset F_1 \supset \cdots \supset F_l'$ and (iii) $E = E_1 \supset E_2 \supset \cdots \supset E_l$ where, for each $1 \leq i \leq l$, the subspaces $F_i, F_i'$ are $i$-codimensional and the subspace $E_i$ is $(k - (i - 1))$-dimensional. Additionally, for each $1 \leq i \leq l$ we have (iv) a unit vector $v_i \in E_i$ (writing $w_i := |A v_i|^{-1} A v_i$). The objects (i) - (iv) satisfy
\[
E_i := E_{i -1 } \cap F_{i - 1} \, , \quad A F_i \subset F_i' \, , \quad F_{i-1} = \langle v_{i } \rangle \oplus F_{i } \, , \quad \text{ and } \quad F_{i - 1}' = \langle w_i \rangle \oplus F_i' 
\]
and obey the following estimates.
\begin{gather}
\label{eq:volumeGrowControl} \frac{V_{k-(i-1)}(A|_{F_{i-1}})}{c_1(A|_{F_{i - 1}})} \lesssim V_{k - i }(A|_{F_i }) \lesssim \frac{V_{k-(i-1)}(A|_{F_{i-1}})}{|A v_{i }|} \, , \\
\label{eq:projectionsControl} |\pi_{\langle v_i \rangle \ds F_i }| \lesssim r^{-2^{i - 1}}, \quad \text{ and } \quad |\pi_{\langle w_i \rangle \ds F_i'}| = 1 \, , \\
\label{eq:Elvolume} \det( A | E_i) \gtrsim r^{2^{i - 1}} V_{k - (i - 1)} (A|_{F_{i - 1}}) \, .
\end{gather}

Having proved the base case, we now carry out the $(l + 1)$-th step of the induction. By \eqref{eq:volumeGrowControl}, \eqref{eq:projectionsControl}, \eqref{eq:Elvolume} for $i  = l$, it follows from a deduction analogous to that producing \eqref{eq:E2volGrow} that 
\[
 \det(A | E_{l + 1}) \gtrsim r^{ 2^{l}} V_{k - l}  (A|_{F_{l}}) \, ,
\]
where $E_{l + 1} := E_l \cap F_l$ has dimension $k - l$. Similarly to \eqref{eq:gelfandEstimate}, this implies
\[
|A|_{E_{l + 1}}| = c_1(A|_{E_{l  +1}}) \gtrsim r^{2^{l}} c_1(A|_{F_l}) \, .
\]
We now select a unit vector $v_{l + 1} \in E_{l + 1}$ (writing $w_{l + 1} = |A v_{l + 1}|^{-1} A v_{l + 1}$) for which $|A v_{l + 1}| = |A|_{E_l}|$, and apply Lemma \ref{lem:oneStep} with $v = v_{l + 1}$ and $A = A|_{F_l}$, obtaining complements $F_{l + 1}, F_{l + 1}'$ of $\langle v_{l + 1} \rangle, \langle w_{l + 1} \rangle$ for which
\begin{gather*}
A F_{l + 1} \subset F_{l + 1}' \, , \\
\frac{V_{k-l}(A|_{F_{l}})}{c_1(A|_{F_{l }})} \lesssim V_{k - (l + 1) }(A|_{F_{l + 1} }) \lesssim \frac{V_{k-l}(A|_{F_{l}})}{|A v_{l + 1 }|} \, , \\
 |\pi_{\langle v_{l + 1} \rangle \ds F_{l + 1} }| \lesssim r^{-2^{l }}, \quad \text{ and } \quad |\pi_{\langle w_{l + 1} \rangle \ds F_{l + 1}'}| = 1 \, .
\end{gather*}
This completes the induction step, having shown that the induction hypothesis following step $l$ implies that following step $l + 1$. The induction terminates after step $l = k$, since at this point $E_{k + 1} := E_k \cap F_k = \{0\}$.

To complete the proof, we set $F = F_k$ and $F' = F_k'$, and shall check items 1-3. Item 1 is a part of the induction hypothesis following step $l = k$. For Item 2, note that $E = \langle v_1, \cdots, v_k \rangle$, and so
\[
\pi_{F \ds E} = \pi_{F_k \ds \langle v_k \rangle } \circ \cdots \circ \pi_{F_2 \ds \langle v_2 \rangle}  \circ \pi_{F_1 \ds \langle v_1 \rangle} \, .
\]
An analogous equation holds for $\pi_{F' \ds E'}$, using that $E' = A E = \langle w_1, \cdots, w_k \rangle$. Item 2 now follows from  \eqref{eq:projectionsControl} for a suitable choice of the constant $D$ (in fact, we may take $D(k,r) = C_k / r^{2^k - 1}$ where $C_k$ depends only on $k$).

For Item 3, fix $v \in F, |v| = 1$. We assume without loss that $v \notin \ker A$. Using Lemma \ref{lem:detSplit}, we estimate
\begin{align*}
\det(A | E \oplus \langle v \rangle) \gtrsim |\pi_{\langle A v \rangle \ds E'}|^{-1} \det(A | E) \cdot |A v| \gtrsim r V_k(A) \cdot |A v| \, ,
\end{align*}
where in the second lower bound we have appealed to Item 2 (indeed, $|\pi_{\langle A v \rangle \ds E'}| \leq |\pi_{F' \ds E'}|$, and the latter is $\leq 2^k$). On the other hand, $\det(A | E \oplus \langle v \rangle) \leq V_{k + 1}(A)$, and so by Lemma \ref{lem:gelfandProps}, we obtain
\[
|A v| \lesssim r^{-1} \frac{V_{k + 1}(A)}{V_k(A)} \lesssim r^{-1} c_{k + 1}(A) \, .
\]
Upon increasing $D(k,r)$ to accomodate this last estimate, the proof is now complete.
\end{proof}

\section{Finding the dominated splitting}\label{sec:hardbit}

In this section we give the proof of Theorem \ref{thm:main}. Our first objective, comprising much of the section, is to prove the `difficult' implication in Theorem \ref{thm:main}:
\begin{prop}\label{prop:hardDirection}
Let $T$ be a homeomorphism of a compact topological space $X$, and let $A_x^n$ denote a cocycle of injective bounded linear operators acting on a Banach space $V$. 

Assume that
\begin{align}\label{eq:restateMagic}
\max\{c_{k+1}(A_x^n),c_{k+1}(A_{Tx}^n))\} < K\tau^n c_{k}(A_x^{n + 1}) 
\end{align}
for any $x \in X, n \geq 1$. Then, there exists a continuous equivariant splitting $V = \Ec(x) \oplus \Fc(x)$ and a constant $\tilde K > 1$ such that for any $x \in X, n \geq 1$, we have
\begin{align}\label{eq:realizeCK}
m(A^n_x |_{\Ec(x)}) \geq \tilde K^{-1} c_k(A^n_x) \, , \quad \text{ and } \quad |A^n_x|_{\Fc(x)}| \leq \tilde K c_{k + 1}(A^n_x) \, .
\end{align}
\end{prop}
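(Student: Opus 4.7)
The strategy, following Raghunathan and Ruelle, is to construct $\Ec(x)$ and $\Fc(x)$ as limits of approximate singular value subspaces obtained from Proposition \ref{prop:genSVD} (and its Corollary \ref{cor:SVD}). For each $x \in X$ and $n \geq 1$, pick $\Ec_n(x) \in \Gc_k(V)$ with $\det(A_x^n \mid \Ec_n(x)) \geq \tfrac{1}{2} V_k(A_x^n)$, and apply Corollary \ref{cor:SVD} to obtain a complement $F_n(x) \in \Gc^k(V)$ and a complement $F_n'(x)$ of $A_x^n \Ec_n(x)$ with $A_x^n F_n(x) \subset F_n'(x)$, $m(A_x^n|_{\Ec_n(x)}) \gtrsim c_k(A_x^n)$, $|A_x^n|_{F_n(x)}| \lesssim c_{k+1}(A_x^n)$, and all associated projections bounded by a constant $C_k$ depending only on $k$. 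The sought splitting will be defined as $\Ec(x) := \lim_n \Ec_n(x)$ and $\Fc(x) := \lim_n F_n(x)$.

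The heart of the argument is a uniform Cauchy estimate $d_H(\Ec_{n+1}(x), \Ec_n(x)) \lesssim \tau^n$, and an analogue for $F_n$. For the $\Ec_n$ estimate, the plan is first to use the hypothesis \eqref{eq:restateMagic} at $x$, together with the factorization $A_x^{n+1}|_{\Ec_n(x)} = A_{T^n x} \circ A_x^n|_{\Ec_n(x)}$ and Lemma \ref{lem:detSplit}, to show that $\det(A_x^{n+1} \mid \Ec_n(x)) \gtrsim V_k(A_x^{n+1})$, so that $\Ec_n(x)$ remains an approximate top-$k$ subspace for $A_x^{n+1}$. Since $\Ec_{n+1}(x)$ is also such by construction, both subspaces are handled by Proposition \ref{prop:genSVD} applied to $A_x^{n+1}$. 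Writing $\Ec_{n+1}(x)$ as a graph $\{e + \phi(e) : e \in \Ec_n(x)\}$ for a linear map $\phi : \Ec_n(x) \to F_n(x)$, one estimates $\|\phi\|$ by volume comparison: the contribution of $\phi(e)$ to $A_x^{n+1}(e + \phi(e))$ is bounded by $|A_x^{n+1}|_{F_n(x)}| \cdot |\phi(e)| \lesssim \tau^n c_k(A_x^{n+1}) |\phi(e)|$ via \eqref{eq:restateMagic}, and a sufficiently large $\phi$ would then force $\det(A_x^{n+1} \mid \Ec_{n+1}(x))$ to fall below $\tfrac{1}{2} V_k(A_x^{n+1})$ (applying Lemma \ref{lem:detSplit} once more). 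One concludes $\|\phi\| \lesssim \tau^n$, and hence $d_H(\Ec_{n+1}(x), \Ec_n(x)) \lesssim \tau^n$ via Remark \ref{rmk:gapEst} and Lemma \ref{lem:apertureProps}. The parallel estimate for $F_n$ proceeds by the analogous argument on annihilators in $V^*$ (using Lemma \ref{lem:apertureProps}(2) to reduce the codimension-$k$ problem to a $k$-dimensional one); this is where the $Tx$-version of \eqref{eq:restateMagic} enters, via the factorization $A_x^{n+1} = A_{Tx}^n \circ A_x$.

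With the uniform Cauchy property established, Proposition \ref{prop:grassProps} produces $\Ec(x) \in \Gc_k(V)$ and $\Fc(x) \in \Gc^k(V)$ as uniform limits. Continuity in $x$ follows from the uniformity together with continuity of $x \mapsto A_x^n$ and Lemma \ref{lem:detReg}. Complementation $V = \Ec(x) \oplus \Fc(x)$ follows from the uniform projection bounds and Lemma \ref{lem:openCond}, while the estimates \eqref{eq:realizeCK} pass to the limit directly from Corollary \ref{cor:SVD}. Equivariance $A_x \Ec(x) = \Ec(Tx)$ (resp.\ $A_x \Fc(x) \subset \Fc(Tx)$) is derived by passing to the limit in the near-invariance $A_x \Ec_n(x) \approx \Ec_{n-1}(Tx)$ coming from the cocycle identity $A_x^n = A_{Tx}^{n-1} \circ A_x$, combined with essential uniqueness of the limiting subspaces (itself guaranteed by the Cauchy estimate applied to any two competing choices of approximating SVD subspaces).

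The main obstacle is the Cauchy estimate of the second paragraph: the Hilbert-space heuristic that ``approximate top-$k$ SVD subspaces coincide up to error $\tau^n$ when $c_{k+1}/c_k \lesssim \tau^n$'' is easy via orthogonality, but in the Banach setting it must be implemented using the non-orthogonal bounded projections from Proposition \ref{prop:genSVD}, with careful bookkeeping of the constants. The two forms of \eqref{eq:restateMagic}---at $x$ and at $Tx$---are each needed: the $x$-form governs the Cauchy convergence of $\Ec_n$, while the $Tx$-form governs that of $F_n$.
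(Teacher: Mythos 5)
There is a genuine gap, and it lies at the very foundation of your construction of the upper bundle. You define $\Ec(x)$ as the limit of domain-side approximately volume-maximizing subspaces $\Ec_n(x)$ of $A^n_x$ taken at the point $x$ itself, and you propose to obtain equivariance from a ``near-invariance'' $A_x\Ec_n(x)\approx\Ec_{n-1}(Tx)$. That near-invariance is false, and the limit of the domain-side top subspaces is in general not equivariant. Already a constant cocycle generated by a single non-normal matrix with an eigenvalue gap shows this: for $A=\begin{pmatrix}2&1\\0&1/2\end{pmatrix}$ the top singular direction of $A^n$ (domain side) converges to the orthogonal complement of the slow eigendirection, a line that is not $A$-invariant and stays a definite angle away from its image under $A$; the equivariant fast space is the eigendirection of $2$, which is instead the limit of the \emph{image} subspaces $A^nE_n$. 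This is exactly how the paper proceeds: the upper space is $\Ec(x)=\lim_{n\to\infty}E_n'(T^{-n}x)$, i.e.\ the top singular subspaces of $A^n_{T^{-n}x}$ pushed forward along the backward orbit (Proposition \ref{prop:findUpperSubspace}), and the Cauchy estimate compares $E_n'(T^{-n}x)$ with $E_{n+1}'(T^{-(n+1)}x)$, not $\Ec_n(x)$ with $\Ec_{n+1}(x)$ at a fixed $x$. Your graph-transform/volume-comparison estimate at a fixed $x$ is therefore aimed at the wrong pair of subspaces, and no repair of its constants can rescue equivariance of $\lim_n\Ec_n(x)$.

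A second, subtler omission: even with the correct $\Ec$, the bounds \eqref{eq:realizeCK} and the construction of $\Fc$ do not ``pass to the limit directly from Corollary \ref{cor:SVD}.'' The complements produced by Corollary \ref{cor:SVD} are attached to the approximating subspaces, not to $\Ec(x)$; to build $\Fc(x)$ with controlled projections one must first show that the limiting subspace still captures a uniform positive fraction of $k$-dimensional volume growth, $\det(A^n_x\,|\,\Ec(x))\geq r_\Ec V_k(A^n_x)$ (Proposition \ref{prop:distEst}), and only then apply Proposition \ref{prop:genSVD} with $E=\Ec(x)$ to get complements $\hat F_n(x)$ whose limit is $\Fc(x)$. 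That volume estimate is the step where the injectivity of $A_x$, the continuity of $\Ec$, and compactness of $X$ genuinely enter (via the uniform lower bound $m(A_x|_{\Ec(x)})\geq c_\Ec$ of Corollary \ref{cor:lowerEcContraction}) together with the Lipschitz estimate for determinants (Lemma \ref{lem:detReg}); your proposal never addresses it. By contrast, your idea of taking $\Fc(x)=\lim_n F_n(x)$ with the $F_n(x)$ from Corollary \ref{cor:SVD} is close to the paper's auxiliary Claim \ref{cla:cheatComp} (those subspaces are indeed uniformly Cauchy), but without the volume estimate for $\Ec(x)$ you cannot control $|\pi_{\Ec(x)\ds\Fc(x)}|$ or obtain $|A^n_x|_{\Fc(x)}|\lesssim c_{k+1}(A^n_x)$ in the form \eqref{eq:realizeCK}.
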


Before sketching the proof let us fix some notation. For the remainder of the section we fix for each $x \in X, n \geq 1$ a pair of $k$-dimensional/codimensional splittings $V = E_n(x) \oplus F_n(x),  E_n'(x) \oplus F_n'(x)$ for which
\begin{gather*}
A^n_x E_n(x) = E_n'(x) \, , \quad A_x^n F_n(x) \subset F_n'(x) \, , \\
\det(A^n_x | E_n(x)) \geq \frac12 V_k(A^n_x) \, , \\
m(A^n_x|_{E_n(x)}) \geq C_k^{-1} c_k(A^n_x) \, , \quad |A^n_x|_{F_n(x)}| \leq C_k c_{k + 1}(A^n_x) \, , \\
\text{and }\quad  |P_n(x)|, |P_n'(x)| \leq C_k \, ,
\end{gather*}
the existence of which is guaranteed by applying Corollary \ref{cor:SVD} to the operator $A_x^n$.
Above, we have written $P_n(x) = \pi_{E_n(x) \ds F_n(x)}, P_n'(x) = \pi_{E_n'(x) \ds F_n'(x)}$. We will also, at times, write $Q_n(x) = \Id - P_n(x), Q_n'(x) = \Id - P_n'(x)$. We note the identities $A^n_xP_n(x)=P_n'(x)A_x^n$, $A^n_xQ_n(x)=Q_n'(x)A_x^n$ which will be used frequently without further comment.

Our strategy will be as follows:
\begin{itemize}
\item We begin by realizing the upper subspace $\Ec(x)$ as the limit of the subspaces $E_n'(T^{-n} x)$ in \S \ref{subsec:findUpperSubspace}. Our method here follows that of \cite{GoQu14b}.
\item In \S \ref{subsec:upperSubspaceContinuous}, we prove the continuity of the map $x \mapsto \Ec(x)$. As a corollary, we obtain that $m(A_x|_{\Ec(x)}) > c_{\Ec}$ for any $x \in X$, where $c_{\Ec} > 0$ is a constant. This fact resolves an important technical issue, as $A_x$, although injective, may have minimal norm zero.
\item In \S \ref{subsec:positiveVolume}, we show that $A^n_x$ realizes a positive proportion of its maximal $k$-dimensional volume growth on the distribution $\Ec(x)$.
\item Finally, in \S \ref{subsec:lowerSubspace}, we apply our `generalized' SVD formulation, Proposition \ref{prop:genSVD}, to the $k$-dimensional subspaces $\Ec(x)$, realizing $\Fc(x)$ as a limit of uniformly `nicely complemented' subspaces to $\Ec(x)$.
\end{itemize}

Having completed the proof of Proposition \ref{prop:hardDirection}, in \S \ref{subsec:converse} we formulate and prove the converse direction, i.e., that the existence of a $k$-dimensional dominated splitting implies \eqref{eq:magic}.

\subsection{Finding the upper subspace $\mathcal E$}\label{subsec:findUpperSubspace}

\begin{prop}\label{prop:findUpperSubspace}
The limit
\[
\mathcal E(x) = \lim_{n \to \infty} E_n'(T^{-n} x) 
\]
exists and is a uniform limit with respect to $x \in X$. Furthermore $\mathcal E$ is an equivariant distribution, i.e., $A_x \mathcal E(x) \subset \mathcal E(T x)$ for every $x \in X$.
\end{prop}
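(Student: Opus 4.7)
The plan is to show that $(E_n'(T^{-n}x))_{n\geq 1}$ is Cauchy in the complete space $(\Gc_k(V), d_H)$ with rate $\tau^n$ uniform in $x$, and then deduce equivariance from the resulting limit. For the Cauchy step I would fix $x$, write $z = T^{-n}x$ and $y = T^{-(n+1)}x$, and use the cocycle identity to represent $E_{n+1}'(y) = A^n_z W$ with $W := A_y E_{n+1}(y)$. Both $E_n'(z)$ and $E_{n+1}'(y)$ are then $A^n_z$ applied to $k$-dimensional subspaces of $V$, opening the door to a cone-contraction argument for $A^n_z$ using the complement $F_n(z)$ supplied by Corollary \ref{cor:SVD}.

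The core sub-step is showing $W\cap F_n(z)=\{0\}$ for all sufficiently large $n$, uniformly in $x$. Arguing by contradiction, if $v \in W \cap F_n(z)$ is nonzero then by injectivity of $A_y$ there is a unique $e \in E_{n+1}(y)$ with $v=A_y e$, satisfying $|e|\geq |v|/M$ with $M:=\sup_{x\in X}|A_x|$. On the one hand $v\in F_n(z)$ gives $|A^n_z v|\leq C_k c_{k+1}(A^n_z)|v|$, while on the other $A^n_z v = A^{n+1}_y e$ and near-optimality of $E_{n+1}(y)$ gives $|A^{n+1}_y e|\geq C_k^{-1} c_k(A^{n+1}_y)|e|$. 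Combining yields $c_k(A^{n+1}_y)\leq C_k^2 M\, c_{k+1}(A^n_z)$, which contradicts \eqref{eq:restateMagic} as soon as $K\tau^n<(C_k^2M)^{-1}$. This is exactly where the peculiar form of \eqref{eq:restateMagic}, with $c_k(A^{n+1}_x)$ rather than $c_k(A^n_x)$, is indispensable in the non-invertible setting.

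Given transversality, the gap estimate is a bookkeeping computation. For $w\in W$ with $|w|=1$ write $w = u+v$ with $u\in E_n(z)$, $v\in F_n(z)$, and set $w = A_y e_w$. Then $|A^n_z w| = |A^{n+1}_y e_w|\geq (C_k M)^{-1} c_k(A^{n+1}_y)|w|$ from near-optimality of $E_{n+1}(y)$, while $|A^n_z v|\leq C_k(1+C_k)c_{k+1}(A^n_z)|w|$ from the control on $|A^n_z|_{F_n(z)}|$. Hence
\[
\Gap(E_{n+1}'(y),E_n'(z))\leq \sup_{w\in W}\frac{|A^n_z v|}{|A^n_z w|}\leq C'\,\frac{c_{k+1}(A^n_z)}{c_k(A^{n+1}_y)}\leq C'K\tau^n,
\]
and Lemma \ref{lem:apertureProps} converts this to $d_H(E_{n+1}'(y),E_n'(z))\leq C''\tau^n$. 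Since $M$ and all constants are uniform in $x$, the sequence is uniformly Cauchy in the complete space $(\Gc_k(V), d_H)$ (Proposition \ref{prop:grassProps}), yielding the uniform limit $\Ec(x)$.

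For equivariance I plan to show $d_H(A_x E_n'(T^{-n}x),\,E_{n+1}'(T^{-n}x))\to 0$, which together with the Cauchy convergence forces $A_x\Ec(x)=\Ec(Tx)$. Both of these subspaces equal $A^{n+1}_{T^{-n}x}$ applied to $E_n(T^{-n}x)$ and $E_{n+1}(T^{-n}x)$ respectively, so the same cone-contraction template now applies to the operator $A^{n+1}_{T^{-n}x}$, using the near-optimal $E_{n+1}(T^{-n}x)$ and its complement $F_{n+1}(T^{-n}x)$. The transversality sub-step now requires the lower bound $m(A_x|_{E_n'(T^{-n}x)})\geq \tfrac12 m(A_x|_{\Ec(x)})>0$ for large $n$, which follows from injectivity of $A_x$ on the finite-dimensional $\Ec(x)$, the continuity of $m$ on $\Gc_k(V)$, and the convergence $E_n'(T^{-n}x)\to\Ec(x)$ just established. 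This same lower bound is what legitimizes passing $A_x$ through the Grassmannian limit. The main obstacle is the transversality step: non-invertibility of $A_x$ blocks any direct comparison of $c_j(A^n_z)$ with $c_j(A^{n+1}_y)$ via inverses, so the argument must route through the precise form of \eqref{eq:restateMagic}.
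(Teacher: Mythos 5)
Your argument is correct, and its core is the same as the paper's. Your Cauchy estimate is, after unwinding, exactly the paper's bound on $|Q_n'(T^{-n}x)|_{E'_{n+1}(T^{-(n+1)}x)}|$: writing $w=A_yе$... more precisely, your quotient $|A^n_z Q_n(z)w|/|A^n_zw|$ with the numerator controlled by $|A^n_z|_{F_n(z)}|\leq C_k c_{k+1}(A^n_z)$ and the denominator by $m(A^{n+1}_y|_{E_{n+1}(y)})\geq C_k^{-1}c_k(A^{n+1}_y)$, followed by the hypothesis applied at base point $y$ (which is precisely where $c_{k+1}(A^n_{Tx})$ in \eqref{eq:restateMagic} is used), is the paper's computation leading to \eqref{eq:largeNgapEst}; note that your transversality sub-step $W\cap F_n(z)=\{0\}$ is never actually needed for the gap bound (and is in fact an immediate consequence of it once $C'K\tau^n<1$), and the conversion $\Gap\to d_H$ via Lemma \ref{lem:apertureProps}(3) needs the smallness proviso $\Gap<1/k$, handled for small $n$ by $d_H\leq 2$ — this is exactly Remark \ref{rmk:gapEst}. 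Where you genuinely deviate is equivariance: the paper works at the level of vectors, showing $d(A_xv,E'_{n+1}(T^{-n}x))\to0$ for $v\in\Ec(x)$ by approximating $v$ by $v_n'\in E'_n(T^{-n}x)$, which requires only $\sup_x|A_x|<\infty$; you instead work at the level of subspaces, showing $d_H(A_xE'_n(T^{-n}x),E'_{n+1}(T^{-n}x))\to0$ and passing $A_x$ through the Grassmannian limit, which forces you to invoke the extra ingredient $m(A_x|_{\Ec(x)})>0$ (legitimate, by injectivity and $\dim\Ec(x)=k<\infty$, and anticipating the paper's Corollary \ref{cor:lowerEcContraction}), plus a second cone-contraction estimate for $A^{n+1}_{T^{-n}x}$. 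Both routes are sound; yours delivers the slightly stronger conclusion $A_x\Ec(x)=\Ec(Tx)$ (equivalent here to the stated inclusion, since $A_x$ is injective and both spaces have dimension $k$), at the cost of an additional, pointwise-in-$x$ lower bound that the paper's more elementary vector-level argument avoids.
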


%mention uniformity of the convergence rate

\begin{proof}
%Let us write $Q_n(x)$ for the projection $\pi_{F_n(x) \ds E_n(x)}$ and $Q_n'(x)$ for $\pi_{F_n'(x) \ds E_n'(x)}$. 
We first prove that $\{E_n'(T^{-n} x)\}$ is uniformly Cauchy by giving a sufficiently strong estimate on 
\[ (*) = |Q_n'(T^{-n} x)|_{E_{n + 1}'(T^{- (n + 1)} x)}| \, ,\] 
which implies the bound $d_H(E_n'(T^{-n} x), E_{n + 1}'(T^{- (n + 1)} x) \leq 4 k (*)$ by Remark \ref{rmk:gapEst}.
 
Fix $v' \in E_{n+1}'(T^{-(n + 1)} x)$ with $|v'| = 1$ and let $v \in E_{n+1}(T^{- (n + 1)} x)$ be such that $A_{T^{-(n + 1)} x}^{n + 1} v = v'$. Then,
\begin{align*}
|Q_n'(T^{-n} x) v'| = |A_{T^{-n} x}^n \circ Q_n(T^{-n} x) \circ A_{T^{- (n + 1)} x} v| \leq K_1 C_k  c_{k + 1}(A^n_{T^{-n} x}) |v| \, ,
\end{align*}
where $K_1 = \sup_x |A_x|$. On the other hand,
\begin{align*}
C_k^{-1} c_k(A^{n + 1}_{T^{- (n + 1)} x}) |v| \leq |A^{n + 1}_{T^{- (n + 1)}x} v| = |v'| = 1 \, ,
\end{align*}
and since $v' \in E_{n + 1}'(T^{-(n + 1)}x)$ was arbitrary, we conclude that
\begin{align}\label{eq:largeNgapEst}
(*) \leq K_1 C_k^2 \frac{c_{k + 1}(A^n_{T^{-n} x})}{c_k(A^{n + 1}_{T^{- (n + 1)} x})} \leq K K_1 C_k^2 \tau^n
\end{align}
uniformly with respect to $x \in X$. Because this bound is geometric, we conclude that $\{E_n'(T^{-n} x)\}$ is Cauchy, and so by the completeness of $\Gc_k(V)$ converges uniformly in $x$ to a subspace $\mathcal E(x) \in \Gc_k(V)$.

It remains to check that $A_x \mathcal E(x) \subset \mathcal E(T x)$. Indeed, we shall prove that for any $v \in \mathcal E(x)$, $|v| =1$, we have
\[
d(A_x v, E_{n + 1}'(T^{-n} x)) \leq (4 k + 1) K K_1^2 C_k^2 \tau^n \, ,
\]
hence $A_x v \in \mathcal E(T x)$ since $E_{n + 1}'(T^{-n} x)) \to \mathcal E(T x)$ in $d_H$. To do this, for each $n$ we fix a unit vector $v_n' \in E_n'(T^{-n} x)$ for which $|v - v_n'| \leq 4 k K K_1 C_k^2 \tau^n$, and as before we write $A^n_{T^{-n} x} v_n = v_n'$ for $v_n \in E_n(T^{-n} x)$. Now,
\[
|Q_{n + 1}'(T^{-n} x) \circ A_x v_n'| = |A_{T^{-n} x}^{n + 1} \circ Q_{n + 1}(T^{- n} x) v_n| \leq C_k c_{k + 1}(A^{n +1}_{T^{-n} x}) |v_n| \, ,
\]
whereas $1 = |v_n'| \geq C_k^{-1} c_k(A^n_{T^{-n} x}) |v_n|$, and so
\[
d(A_x v_n', E_{n + 1}'(T^{-n} x)) \leq |Q_{n +1}'(T^{-n} x) \circ A_x v_n'| \leq K K_1^2 C_k^2 \tau^n \, . \qedhere
\]
\end{proof}

\subsection{Continuity of $\mathcal E$}\label{subsec:upperSubspaceContinuous}

\begin{lem}\label{lem:contUpperSubspace}
The function $x \mapsto \mathcal E(x)$ is continuous.
\end{lem}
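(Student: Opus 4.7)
The plan is to combine the uniform convergence $E_n'(T^{-n} y) \to \mathcal{E}(y)$ given by Proposition \ref{prop:findUpperSubspace} with a finite-$n$ continuity argument. Given $\epsilon > 0$, I would first choose $n$ large enough that $d_H(E_n'(T^{-n} y), \mathcal{E}(y)) < \epsilon/3$ uniformly in $y \in X$; the heart of the matter is then to show that for this fixed $n$, $d_H(E_n'(T^{-n} y), E_n'(T^{-n} x)) \to 0$ as $y \to x$, so that the triangle inequality immediately yields continuity of $\mathcal{E}$ at $x$.

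The main difficulty is that $E_n(T^{-n} y)$ was only selected to satisfy $\det(A^n_{T^{-n} y} | E_n(T^{-n} y)) \geq \tfrac12 V_k(A^n_{T^{-n} y})$, and this selection may depend on $y$ in a highly discontinuous way; in particular one cannot simply compare $E_n(T^{-n} y)$ with $E_n(T^{-n} x)$. To circumvent this I would transport the choice from $x$: introduce the auxiliary subspace $\tilde E_y := A^n_{T^{-n} y} E_n(T^{-n} x)$, and compare $E_n'(T^{-n} y)$ with $\tilde E_y$, and then $\tilde E_y$ with $E_n'(T^{-n} x)$. For the first comparison, Lemma \ref{lem:detReg} applied to the fixed subspace $E_n(T^{-n} x)$, together with (joint) continuity of $y \mapsto A^n_{T^{-n} y}$ in the operator norm and the elementary continuity of $A \mapsto V_k(A)$, shows that $E_n(T^{-n} x)$ remains near-optimal for $A^n_{T^{-n} y}$ at some definite level $r > 0$ whenever $y$ is sufficiently close to $x$. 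Hence $\tilde E_y$ and $E_n'(T^{-n} y)$ are both images under the same operator $A^n_{T^{-n} y}$ of near-optimal $k$-dimensional subspaces; applying the approximate singular value splitting of Corollary \ref{cor:SVD} to $E_n(T^{-n} y)$ produces a complement $F$ with $|A^n_{T^{-n} y}|_{F}| \lesssim c_{k+1}(A^n_{T^{-n} y})$ and $|\pi_{E_n(T^{-n} y) \ds F}| \lesssim 1$, and decomposing vectors of $E_n(T^{-n} x)$ along $E_n(T^{-n} y) \oplus F$ bounds $d_H(\tilde E_y, E_n'(T^{-n} y))$ by a uniform constant times the ratio $c_{k+1}(A^n_{T^{-n} y}) / c_k(A^n_{T^{-n} y})$. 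This ratio is $\lesssim \tau^n$ uniformly, by combining the hypothesis \eqref{eq:restateMagic} with the basic inequality $c_k(A^{n+1}_x) \leq |A_{T^n x}| \, c_k(A^n_x)$.

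For the second comparison, $\tilde E_y$ and $E_n'(T^{-n} x)$ are simply the images of the \emph{fixed} finite-dimensional subspace $E_n(T^{-n} x)$ under two operators $A^n_{T^{-n} y}$ and $A^n_{T^{-n} x}$ that are close in the operator norm by joint continuity; using that $m(A^n_{T^{-n} y}|_{E_n(T^{-n} x)}) \gtrsim c_k(A^n_{T^{-n} y}) > 0$ for $y$ close to $x$ (by the near-optimality conclusion of Corollary \ref{cor:SVD}, with positivity of $c_k$ following because \eqref{eq:restateMagic} fails otherwise), an elementary estimate in the spirit of Remark \ref{rmk:gapEst} yields $d_H(\tilde E_y, E_n'(T^{-n} x)) \to 0$ as $y \to x$ for this fixed $n$. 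Assembling both bounds yields $d_H(E_n'(T^{-n} y), E_n'(T^{-n} x)) \leq C\tau^n + o_{y \to x}(1)$, and since the $\tau^n$-term has already been absorbed by the choice of $n$, this completes the proof. The principal obstacle, as indicated, is the non-canonical nature of the selection $E_n(T^{-n} y)$; it is resolved by transporting the choice from $x$ and absorbing the resulting error via the Gelfand-number gap supplied by the approximate SVD.
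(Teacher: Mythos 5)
Your argument is correct in substance, but it is a genuinely different route from the paper's. The paper never transports the choice of near-optimal subspace from $x$ to $y$: instead it compares $E_n'(T^{-n}x)$ with the \emph{next-stage} subspace $E_{n+1}'(T^{-(n+1)}y)$, working on the neighbourhood $U_x=\{y: |A^{n+1}_{T^{-(n+1)}y}-A^{n+1}_{T^{-(n+1)}x}|<\tau^n c_k(A^{n+1}_{T^{-n}x})\}$. For a unit vector $v'=A^{n+1}_{T^{-(n+1)}y}v\in E_{n+1}'(T^{-(n+1)}y)$ one estimates $|Q_n'(T^{-n}x)v'|$ by swapping $A^{n+1}_{T^{-(n+1)}y}$ for $A^{n+1}_{T^{-(n+1)}x}$ (cost: the operator-norm difference) and then using the cocycle identity $A^{n+1}_{T^{-(n+1)}x}=A^n_{T^{-n}x}A_{T^{-(n+1)}x}$ so that $Q_n'(T^{-n}x)$ commutes through and produces a factor $c_{k+1}(A^n_{T^{-n}x})$; the hypothesis \eqref{eq:restateMagic} in the form $c_{k+1}(A^n_{T^{-n}x})/c_k(A^{n+1}_{T^{-(n+1)}y})\lesssim\tau^n$ then finishes via Remark \ref{rmk:gapEst}. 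This index shift is exactly what makes the non-canonical selection at $y$ harmless, and it needs nothing beyond the defining properties of the splittings from Corollary \ref{cor:SVD} — in particular no determinant regularity. Your transport argument ($\tilde E_y:=A^n_{T^{-n}y}E_n(T^{-n}x)$, re-certify near-optimality of $E_n(T^{-n}x)$ for $A^n_{T^{-n}y}$ via Lemma \ref{lem:detReg}, then use the SVD splitting at $y$ and the ratio $c_{k+1}(A^n_{T^{-n}y})/c_k(A^n_{T^{-n}y})\lesssim K K_1\tau^n$) also works, and is closer in spirit to the machinery of \S\ref{subsec:positiveVolume}; its price is the extra input (Lemma \ref{lem:detReg}, Lemma \ref{lem:gelfandProps}) where the paper uses none. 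One detail you should repair: in a general Banach space the ``elementary continuity of $A\mapsto V_k(A)$'' is not immediate, since $\det(A|E)$ is only \emph{comparable} to $\prod_{i\le k}c_i(A|_E)$, so uniform-in-$E$ continuity of the supremum is not obvious; but you only need the one-sided comparison $V_k(A^n_{T^{-n}y})\le C_k' V_k(A^n_{T^{-n}x})$ on a neighbourhood where $|A^n_{T^{-n}y}-A^n_{T^{-n}x}|\le c_k(A^n_{T^{-n}x})$, and this follows from \eqref{eq:prodGelfand} together with the $1$-Lipschitz continuity of Gelfand numbers, yielding a near-optimality level $r$ depending only on $k$. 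With that substitution (and noting that the minimal-norm lower bound $m(A^n_{T^{-n}y}|_{E_n(T^{-n}x)})\gtrsim r\,c_k(A^n_{T^{-n}y})$ comes from the argument \eqref{eq:ckRealizedbyE} inside Proposition \ref{prop:genSVD} rather than from Corollary \ref{cor:SVD} verbatim), your proof goes through.
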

\begin{proof}
Let $\varepsilon>0$ and choose $n$ large enough that $\frac{1}{4 k}  C_k^2 (K_1 K + 1) \tau^n <\varepsilon/3$ and such that $d(\Ec (z),E_n'(z))<\varepsilon/3$ and $d(\Ec(z),E_{n+1}'(z))<\varepsilon/3$ for all $z \in X$. Let $U_x$ be the set of all $y \in X$ such that
\[ | A_{T^{-(n+1)}y}^{n+1}- A_{T^{-(n+1)}x}^{n+1} | < \tau^n c_{k}(A_{T^{-n}x}^{n+1}) \, ,\]
%\[c_{k+1}(\mathcal{A}(T^{-n}x,n))<C\tau^nc_k(\mathcal{A}(T^{-n-1}y,n+1))\]
which is clearly an open neighbourhood of $x$; we claim that $d_H(\Ec(x),\Ec(y))<\varepsilon$ for all $y \in U_x$.

Like before, we appeal to Remark \ref{rmk:gapEst} to estimate
\[
d_H(E_n'(T^{-n}x), E_{n + 1}'(T^{-(n + 1)} y)) \leq 4 k |Q_n'(T^{-n} x)|_{E_{n + 1}'(T^{- (n + 1)} y)}| 
\]
for $y \in U_x$. Fixing such a $y$, let $v' \in E'_{n+1}(T^{-(n+1)}y)$ be a unit vector and choose $v \in E_{n+1}(T^{-(n+1)}y)$ such that $v'=A_{T^{-(n+1)}y}^{n+1} v$, noting that as before, $1 = |v'| \geq C_k^{-1} c_k(A^{n + 1}_{T^{- (n + 1)} x}) |v| $. We have
\begin{align*}
 |Q_n'(T^{-n} x) v'| & =  |Q_n'(T^{-n} x) A_{T^{-(n+1)}y}^{n+1} v| \leq C_k |A_{T^{-(n+1)}y}^{n+1} - A_{T^{-(n+1)}x}^{n+1}| \cdot |v| + |Q_n'(T^{-n} x) \circ A_{T^{-(n+1)}x}^{n+1} v | \\
 & < C_k  \tau^n c_{k}(A_{T^{-n}x}^{n+1}) |v| + |A^n_{T^{-n} x} \circ Q_n(T^{-n} x) \circ A_{T^{-(n + 1)} x} v| \\
 & \leq \big( C_k  \tau^n c_{k}(A_{T^{-n}x}^{n+1}) + K_1 C_k c_{k + 1}(A^n_{T^{-n} x}) \big)  |v| \\
 & < C_k^2 (K_1 K  + 1) \tau^n \, .
\end{align*}
Taking the supremum over unit vectors $v' \in E_{n+1}'(T^{-(n+1)}y)$ we have obtained the bound
\[d_H(E_n'(T^{-n}x), E_{n+1}'(T^{-(n+1)}y)) \leq 4k |Q_n'(T^{-n} x)|_{E_{n + 1}'(T^{- (n + 1)} y)}| <\frac{\varepsilon}{3},\]
and we conclude that for all $y \in U_x$
\begin{align*}
d_H(\Ec(x), \Ec(y)) & \leq d_H(\Ec(x), E_n'(T^{-n}x)) + d_H(E_n'(T^{-n}x), E_{n+1}'(T^{-(n+1)}y)) \\
& + d_H(E_{n+1}'(T^{-(n+1)}y), \Ec(y)) < \varepsilon \, . \quad \quad \quad  \qedhere
\end{align*}
as required.
\end{proof}

The following is an immediate consequence of the continuity of $x \mapsto \Ec(x)$ and the injectivity of $A_x$.
\begin{cor}\label{cor:lowerEcContraction}
There is a constant $c_{\Ec} > 0$ for which
\[
m(A_x|_{\Ec(x)}) > c_{\Ec}
\]
for all $x \in X$.
\end{cor}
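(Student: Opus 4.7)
The plan is to define $\phi(x) := m(A_x|_{\Ec(x)})$, show $\phi$ is (lower semi)continuous on $X$, note $\phi(x) > 0$ pointwise by injectivity of $A_x$ together with finite-dimensionality of $\Ec(x)$, and then invoke compactness of $X$ to conclude $\inf_X \phi > 0$. Setting $c_{\Ec} := \frac{1}{2}\inf_X\phi$ yields the corollary.

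The only step that needs care is lower semicontinuity. Fix $x \in X$ and a sequence $x_n \to x$. By Lemma \ref{lem:contUpperSubspace}, $\Ec(x_n) \to \Ec(x)$ in $(\Gc(V),d_H)$, and by assumption $A_{x_n} \to A_x$ in operator norm. Choose unit vectors $v_n \in \Ec(x_n)$ with $|A_{x_n}v_n| = \phi(x_n)$; these exist because $\Ec(x_n)$ is a $k$-dimensional Banach space, so its unit sphere is compact and $v \mapsto |A_{x_n}v|$ attains its minimum. By the definition of $d_H$, for each $n$ there is $\tilde v_n \in \Ec(x)$ with $|v_n - \tilde v_n| \to 0$, in particular $|\tilde v_n| \to 1$. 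Since $\Ec(x)$ is finite-dimensional, after passing to a subsequence we may assume $\tilde v_n \to v$ for some $v \in \Ec(x)$ with $|v|=1$, and then $v_n \to v$ as well. The elementary estimate
\[
\bigl||A_{x_n}v_n| - |A_x v|\bigr| \leq \|A_{x_n}\|\,|v_n - v| + \|A_{x_n} - A_x\|
\]
then yields $\phi(x_n) = |A_{x_n}v_n| \to |A_x v| \geq \phi(x)$, proving $\liminf_n \phi(x_n) \geq \phi(x)$.

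For pointwise positivity, fix $x \in X$: $A_x|_{\Ec(x)}$ is a linear injection from the finite-dimensional space $\Ec(x)$ into $V$, so its restriction to the compact unit sphere of $\Ec(x)$ is a continuous nowhere-vanishing function, whence $\phi(x) > 0$. Combining this with lower semicontinuity and the compactness of $X$, the infimum $\inf_{x \in X}\phi(x)$ is attained at some $x_* \in X$ and is therefore strictly positive; any $c_{\Ec} \in (0,\phi(x_*))$ works.

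The main obstacle, such as there is one, is the extraction of a limit for the minimizing sequence $(v_n)$ in the infinite-dimensional space $V$; this is resolved by exploiting the finite-dimensionality of the target $\Ec(x)$ together with the Hausdorff convergence $\Ec(x_n) \to \Ec(x)$ to push the $v_n$ into a finite-dimensional subspace where unit balls are compact. No further ideas are needed.
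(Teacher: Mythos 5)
Your underlying idea coincides with the paper's: pointwise positivity of $\phi(x)=m(A_x|_{\Ec(x)})$ from injectivity together with $\dim\Ec(x)=k<\infty$, a perturbation estimate combining continuity of $x\mapsto\Ec(x)$ in $d_H$ with norm continuity of $x\mapsto A_x$ (your displayed triangle inequality is essentially the paper's), and compactness of $X$ to pass from local to uniform control. The one genuine problem is that you run the continuity step through sequences: $X$ is only assumed to be a \emph{compact topological space}, not a metric (or even first countable) space, so sequential lower semicontinuity of $\phi$ does not imply that the sublevel sets $\{\phi\le c\}$ are closed, sequential compactness may fail, and the final step ``the infimum is attained at some $x_*$, hence positive'' does not follow from a purely sequential argument in this generality. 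As written, your proof is complete only under an extra metrizability (or first countability plus sequential compactness) hypothesis on $X$; there is also a minor bookkeeping point that to get $\liminf_n\phi(x_n)\ge\phi(x)$ you should first pass to a subsequence realizing the $\liminf$ before extracting the convergent subsequence of the $\tilde v_n$.

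The repair is routine and leads exactly to the paper's argument, which deliberately avoids sequences: for each $x$ set $\delta_x=\phi(x)>0$ and consider the open set $U_x=\{y\in X:\ K_1\, d_H(\Ec(x),\Ec(y))+|A_x-A_y|<\delta_x/2\}$ with $K_1=\sup_z|A_z|$; your own estimate, applied to an arbitrary unit vector $u\in\Ec(y)$ and a nearby unit vector $v\in\Ec(x)$, gives $\phi(y)>\delta_x/2$ for every $y\in U_x$, and a finite subcover of $\{U_x\}_{x\in X}$ finishes the proof. (Alternatively, rerun your limit argument with nets and subnets.) With that change your proposal is correct and is essentially the same proof as in the paper.
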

\begin{proof}
As before, we write $K_1 =\sup \{|A_x|\colon x \in X\}>0$. For $x \in X$, let $\delta_x:=m(A_x|_{\Ec(x)})$, which is nonzero since $A_x$ is injective. For each $x \in X$ the set
\[U_x:=\left\{y \in X \colon K_1 d_H(\Ec(x),\Ec(y))+|A_x-A_y|<\frac{\delta_x}{2}\right\}\]
is clearly an open neighbourhood of $x$. If $y \in U_x$ and $u \in \Ec(y)$ is a unit vector, choose a unit vector $v \in \Ec(x)$ such that $|u-v|\leq d_H(\Ec(x),\Ec(y))$ and note that
\[|A_yu| \geq |A_xv|-|(A_x-A_y)v|-|A_y|\cdot|v-u|>m(A_x|_{\Ec(x)})-\frac{\delta_x}{2}=\frac{\delta_x}{2}\]
so that $\inf\{m(A_y|_{\Ec(y)})\colon y \in U_x\}>0$. Passing to a finite subcover of the open cover $\{U_x \colon x \in X\}$ completes the proof.
\end{proof}

\subsection{Volume Growth on $\Ec(x)$}\label{subsec:positiveVolume}

\begin{prop}\label{prop:distEst}
There is a constant $K_2 > 0$ such that for any $x \in X$, $n \in \N$, we have
\[
 \log \frac{\det (A^n_{ x}  | \Ec ( x)) }{\det (A^n_{ x} | E_n( x)) } \geq - K_2 \, .
\]
Therefore,
\[
\det (A^n_{ x} | \Ec( x)) \geq r_{\Ec} V_k(A^n_{ x}) \, ,
\]
where $r_\Ec := \frac12 e^{-K_2}$.
\end{prop}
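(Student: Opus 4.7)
The second conclusion follows from the first using $\det(A^n_x \mid E_n(x)) \geq \tfrac{1}{2} V_k(A^n_x)$, so I focus on the comparison $\log\det(A^n_x\mid\Ec(x)) - \log\det(A^n_x\mid E_n(x)) \geq -K_2$. My plan is to compare the two $k$-dimensional subspaces $\Ec(x)$ and $E_n(x)$ by realizing $\Ec(x)$ as the graph of a linear map over $E_n(x)$ with uniformly controlled slope, leveraging the splitting $V = E_n(x) \oplus F_n(x)$ from Corollary~\ref{cor:SVD}.

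First I would apply the telescoping geometric estimate from the proof of Proposition~\ref{prop:findUpperSubspace} at base point $T^n x$ to obtain a uniform bound $d_H(\Ec(T^n x), E_n'(x)) \leq C_1 \tau^n$ for all $x\in X$, $n\geq 1$. Combined with $|P_n'(x)| \leq C_k$ and Lemma~\ref{lem:openCond}, this forces $\Ec(T^n x)$ to be complemented by $F_n'(x)$ for $n$ sufficiently large, and thus realized as the graph of a linear map $\phi_n : E_n'(x) \to F_n'(x)$ with $|\phi_n| \leq C_2 \tau^n$. Using the equivariance $A^n_x\Ec(x) = \Ec(T^n x)$ together with $A^n_x E_n(x) = E_n'(x)$ and $A^n_x F_n(x) \subseteq F_n'(x)$, together with the injectivity of $A^n_x$, I would then show that $\Ec(x)$ is likewise complemented by $F_n(x)$, realized as the graph of a map $\psi_n : E_n(x) \to F_n(x)$ satisfying the conjugation identity $A^n_x \psi_n = \phi_n \circ A^n_x|_{E_n(x)}$.

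The main obstacle will be to establish a uniform bound $|\psi_n| \leq C_3$ independent of $n$. I would derive this by combining (i) the lower bound $m(A^n_x|_{\Ec(x)}) \geq c_\Ec^n$ from iterating Corollary~\ref{cor:lowerEcContraction}, (ii) the upper bound $|A^n_x|_{F_n(x)}| \leq C_k c_{k+1}(A^n_x)$ from the initial SVD, (iii) the spectral gap $c_{k+1}(A^n_x)/c_k(A^n_x) \lesssim \tau^n$ implied by \eqref{eq:restateMagic} and sub-multiplicativity of Gelfand numbers, and (iv) the geometric decay $|\phi_n| \leq C_2 \tau^n$ from the first step. The delicate point is that the exponential decays in (iii) and (iv) must conspire to cancel the a~priori exponential factors in $|A^n_x|_{E_n(x)}|$ and $c_\Ec^{-n}$ arising from the identity $\psi_n(v) = (A^n_x|_{F_n(x)})^{-1} \phi_n(A^n_x v)$, yielding a bound independent of $n$.

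Once $|\psi_n|$ and $|\phi_n|$ are uniformly bounded, the commuting relation $P_n'(x) \circ A^n_x|_{\Ec(x)} = A^n_x|_{E_n(x)} \circ P_n(x)|_{\Ec(x)}$ yields, upon taking determinants of the $k$-dimensional isomorphisms involved,
\begin{equation*}
\det(A^n_x \mid \Ec(x)) \;=\; \det(A^n_x \mid E_n(x)) \cdot \frac{\det(P_n(x)|_{\Ec(x)})}{\det(P_n'(x)|_{\Ec(T^n x)})} \, ,
\end{equation*}
and the graph representations with bounded slopes yield uniform upper and lower bounds on both projection determinants through the induced-volume identities of Lemma~\ref{lem:indVol} (the unit ball of a graph of slope $\leq C$ over a $k$-dimensional subspace has induced volume comparable, up to factors depending only on $k$ and $C$, to $\omega_k$). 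The finitely many small values of $n$ for which the asymptotic estimates fail can be handled by compactness of $X$ and the fact that $A^n_x$ is injective on $\Ec(x)$, and are absorbed into $K_2$.
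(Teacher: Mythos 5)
Your reduction of the statement to a uniform bound on the slope $\psi_n$ of $\Ec(x)$ over $E_n(x)$ (along $F_n(x)$), together with the determinant identity coming from $P_n'(x)\circ A^n_x|_{\Ec(x)} = A^n_x|_{E_n(x)}\circ P_n(x)|_{\Ec(x)}$, is sound as far as it goes, and your bound $|\phi_n|\lesssim \tau^n$ on the slope at the image side is fine. The genuine gap is in the step you yourself flag as delicate: the passage from $|\phi_n|\lesssim\tau^n$ to $|\psi_n|\leq C_3$. The identity $\psi_n(v)=(A^n_x|_{F_n(x)})^{-1}\phi_n(A^n_xv)$ requires a lower bound on $A^n_x$ along $F_n(x)$, and no such bound exists or is available: $A^n_x$ is only injective, $F_n(x)$ is infinite-dimensional, and $m(A^n_x|_{F_n(x)})$ can be $0$ (think of an operator acting compactly on the slow complement), so $(A^n_x|_{F_n(x)})^{-1}$ is in general unbounded on its image. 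The ingredients you list do not repair this. Item (ii) bounds $|A^n_x|_{F_n(x)}|$ from \emph{above}, which is the wrong direction. Item (i), iterating Corollary~\ref{cor:lowerEcContraction}, gives only $m(A^n_x|_{\Ec(x)})\geq c_{\Ec}^n$, and the rate $c_{\Ec}$ has no relation to $\tau$; the resulting estimate for a unit $v\in E_n(x)$ is of the shape $|\psi_n(v)|\lesssim 1+\tau^n\,c_1(A^n_x)\,c_{\Ec}^{-n}$, which is exponentially large in general, since nothing in the hypotheses prevents $c_1(A^n_x)c_\Ec^{-n}$ from growing faster than $\tau^{-n}$. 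The bound you would actually need, $m(A^n_x|_{\Ec(x)})\gtrsim c_k(A^n_x)$ (Corollary~\ref{cor:ckRealized}), is itself a consequence of Proposition~\ref{prop:distEst}, so invoking it is circular; and even with it you would only control $\tau^n\,c_1(A^n_x)/c_k(A^n_x)$, which is not bounded because \eqref{eq:restateMagic} gives a gap between $c_k$ and $c_{k+1}$, not between $c_1$ and $c_k$. So the "conspiracy of exponential decays" you are counting on cannot be realized with the tools legitimately available at this stage.

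This is precisely the obstruction the paper's proof is designed to avoid: instead of comparing $\Ec(x)$ and $E_n(x)$ through the full map $A^n_x$ in one shot, it telescopes the ratio of determinants over single time steps, $\prod_{q=0}^{n-1}\det(A_{T^qx}\,|\,\Ec(T^qx))/\det(A_{T^qx}\,|\,A^q_xE_n(x))$, proves a geometric bound $d_H(\Ec(T^qx),A^q_xE_n(x))\lesssim\tau^q$ (via Claim~\ref{cla:cheatComp} and Remark~\ref{rmk:gapEst}), and then applies the Lipschitz regularity of $\log\det$ (Lemma~\ref{lem:detReg}); the only lower bound ever needed is the one-step bound $m(A_{T^qx}|_{\cdot})\geq c_\Ec/2$ on $k$-dimensional subspaces near $\Ec(T^qx)$, never an inverse of $A^n_x$ along the slow complement. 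If you want to salvage your graph-transform strategy, you would essentially have to prove uniform transversality of $\Ec(x)$ to $F_n(x)$ by some independent argument, and the natural way to do that is again a step-by-step (geometric series) comparison, which brings you back to the paper's route.
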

\begin{proof}
%As before, the constant $C > 1$ below will change from line to line, but is always independent of $m, n, q$.

Using the multiplicativity of the determinant and that $A_z \Ec(z) = \Ec(Tz)$, we decompose
\[
 \frac{\det (A^n_{ x} | \Ec ( x)) }{\det (A^n_{x} | E_n( x)) }  = \prod_{q = 0}^{n-1} 
 \frac{\det (A_{T^{q} x} | \Ec (T^{q } x)) }{\det (A_{T^{q} x} | A^q_{ x} E_n( x)) } \, .
\]
As usual for such estimates, we will prove a geometric upper bound on 
\begin{align}\label{eq:est1}
d_H(\Ec(T^{ q} x), A_{ x}^q E_n( x)) 
\end{align}
for large $q$.

Our infinite-dimensional setting creates an additional complication, that $A_{T^{q } x}$ may a priori contract on 
$\Ec (T^{q } x), A_{ x}^q E_n( x)$. This is a potential issue for applying the Lipschitz estimate
\eqref{regularity} for the determinant. Fortunately, the first of these is taken care of by Corollary \ref{cor:lowerEcContraction}, and the second will follow from a tight estimate on \eqref{eq:est1}.

We begin by estimating \eqref{eq:est1}. By the triangle inequality,
\begin{align}\label{eq:subspaceEst} \begin{split} 
d_H(\Ec(T^{q } x), A_{ x}^q E_n( x))) \leq &  d_H(\Ec(T^{q } x), A_{ x}^q E_q(x))  + d_H(A_{ x}^q E_q( x), A_{ x}^q E_n( x)) \, .
\end{split} \end{align}
The first term is bounded by $ \frac{4 k}{1 - \tau} K K_1 C_k^2 \tau^q$ by \eqref{eq:largeNgapEst} because $A^q_x E_q(x) = E_q'(x)$. The second term requires an intermediary claim:
\begin{cla}\label{cla:cheatComp}
There exists $Q_0 \in \N$, independent of $x \in X$, such that if $n > q \geq Q_0$, then $ E_n(x)$ complements $F_q( x)$, and
\[
|\pi_{E_n( x) \ds F_q( x)} | \leq C_k + 3 \, .
\]
\end{cla}
\begin{proof}
It suffices to show that $\{F_n(x)\}_{n \in \N}$ is a \emph{uniformly} Cauchy sequence in $\Gc(V)$, i.e., with control independent of $x \in X$. From this and the bound $|\pi_{F_n(x) \ds E_n(x)}| \leq C_k + 1$, we carry over to a bound on $|\pi_{F_q(x) \ds E_n(x)}|$ for $n, q$ sufficiently large using Lemma \ref{lem:openCond} applied to $E = F_n(x), E' = F_q(x), F = E_n(x)$:
\[
|\pi_{F_q(x) \ds E_n(x)}| \leq \frac{|\pi_{F_n(x) \ds E_n(x)}|}{1 - |\pi_{F_n(x) \ds E_n(x)}| d_H(F_n(x), F_q(x))} 
\leq \frac{C_k + 1}{1 - (C_k + 1) d_H(F_n(x), F_q(x))} \, ,
\]
which is $\leq C_k + 2$ when $n, q$ are sufficiently large, hence $|\pi_{E_n(x) \ds F_q(x)}| \leq C_k + 3$.

It remains to show that $\{F_n(x)\}_n$ is uniformly Cauchy. For brevity, we write $P_n(x) = \Id - Q_n(x)$ and $P_n'(x) = \Id - Q_n'(x)$, with $Q_n, Q_n'$ as in the beginning of \S \ref{sec:hardbit}.

For $v \in F_n(x)$, $|v| = 1$, we estimate $P_{n + 1}(x) v$:
\begin{align*}
 C_k^{-1} c_k(A^{n + 1}_x)  |P_{n + 1}(x) v| &  \leq |A^{n + 1}_x \circ P_{n + 1} (x) v| = |P_{n + 1}'(x) \circ A_{T^{ n} x} \circ A_x^n v| \\
 & \leq |P_{n + 1}'(x)| \cdot |A_{T^n x}| \cdot |A^n_x v| \leq C_k^2 K_1 c_{k +1}(A^n_x) \, .
\end{align*}
Thus, we have deduced (again using Remark \ref{rmk:gapEst})
\[
d_H(F_n(x), F_{n + 1}(x)) \leq 4 k C_k^3 K_1 K \tau^n \, .
\]
As before, the fact that this bound is geometric implies the Cauchyness of $\{F_n(x)\}_n$, completing the proof.
\end{proof}

We now complete the estimate of the second term of \eqref{eq:subspaceEst}. Using again Remark \ref{rmk:gapEst}, we shall bound
\begin{align}\label{eq:secondTerm}
d_H(A_{ x}^q E_q( x), A_{x}^q E_n( x)) \leq 4 k |Q_q'( x)|_{A^q_{ x} E_n( x)}| \, .
\end{align}
Fix a unit vector $v' \in A^q_{ x} E_n( x)$, letting $v' = A^q_{ x} v$ for $v \in E_n( x)$. We have the estimate
\[
|Q_q'( x) v'| = |A^q_{ x} Q_q( x) v| \leq C_k^2 c_{k + 1}(A^q_{ x}) |v| \, .
\]
On the other hand, by Claim \ref{cla:cheatComp}, for $n > q \geq Q_0$ we have that
\begin{align}\label{eq:angleLowerEst}
\sin \theta (E_n( x), F_q( x)) \geq (C_k + 3)^{-1} \, ,
\end{align}
and so writing $v = e_q + f_q \in E_q( x) \oplus F_q( x)$, we have $|e_q| \geq (C_k + 3)^{-1} |v|$; using $|\pi_{F_q(x) \ds E_n( x)}| \leq C_k + 2$ we obtain $|f_q| \leq (C_k + 2) |v|$. For $q \geq Q_0$, we now estimate
\[
1 = |v'| = |A^q_{ x} v| \geq |A^q_{ x} e_q| - |A^q_{ x} f_q| \geq \bigg( \frac{1}{C_k(C_k + 3)} c_k(A^q_{ x}) - C_k(C_k + 2) c_{k + 1}(A^q_{ x}) \bigg) |v| \, .
\]
So, there exists $Q_1 \geq Q_0$ sufficiently large so that for $n > q \geq Q_1$,
\[
|v| \leq \frac{2 C_k(C_k + 3)}{c_k(A^q_{ x})} \, .
\]
Plugging back in, we have obtained the bound
\[
\eqref{eq:secondTerm} \leq 4 k \cdot C_k^2 c_{k + 1}(A^q_{ x}) \cdot \frac{2 C_k(C_k + 3)}{c_k(A^q_{ x})} \leq 32 k K K_1 C_k^4 \tau^q \, ,
\]
(recalling that $C_k > 1$) and so in all,
\[
\eqref{eq:subspaceEst} \leq \bigg( \frac{4 k}{1 - \tau} K K_1 C_k^2 + 32 k K K_1 C_k^4 \bigg) \tau^q =: K_2' \tau^q \, .
\]
%note by Proposition \ref{prop:findSplit} that $F_n(m - n), F_q(m - n)$ are exponentially close to each other for $n, q$ large. So, since $E_n(m - n)$ complements $F_n(m - n)$, it follows that for $q$ large enough, $E_n(m - n)$ complements $F_q(m - n)$ as well; moreover, for such $q$, we have

%
%Let now $v' \in T^q_{m -n} E_n(m - n), |v'| = 1$ with $T^q_{m - n} v = v'$. We estimate:
%\[
%|Q_q'(m - n) v'| = |T^q_{m - n} Q_q(m - n) v| \leq C c_{k+1}(T^q_{m - n}) |v| \, .
%\]
%For $v$, we decompose $v = e + f$ where $e \in E_q(m - n), f \in F_q(m - n)$, and $|e| \geq C^{-1} |v|$ by \eqref{eq:angleLowerEst}. It follows that
%\[
%1 = |T^q_{m - n} v| \geq |T^q_{m - n} e| - |T^q_{m - n} f| \geq \big( C^{-1} c_k(T^q_{m - n}) - C c_{k + 1}(T^q_{m - n}) \big) |v| \geq C^{-1} c_k(T^q_{m - n}) |v|
%\]
%for $q$ large enough, implying
%\[
%|Q_q'(m - n) v'| \leq C \frac{c_{k + 1}(T^q_{m - n})}{c_k(T^q_{m - n})} \leq C \tau^q \, ,
%\]
%and an analogous estimate for $d_H(T_{m - n}^q E_q(m - n), T_{m - n}^q E_n(m - n))$ (Remark \ref{rmk:gapEst}).
%
%At this point, we have shown
%\begin{align}\label{eq:expEst}
%d_H(E(m - n + q), T_{m - n}^q E_n(m - n)) \leq C \tau^q \, .
%\end{align}
%We will now use this to bound \eqref{eq:est2} from below. Fixing $v \in T_{m -n}^q E_n(m - n), |v| = 1$, let $v' \in E(m - n + q), |v'| = 1$ be such that $|v - v'| \leq C \tau^q$ (using \eqref{eq:expEst}). We now estimate
%\[
%|T_{m - n + q} v| \geq |T_{m - n + q} v'| - |T_{m - n + q} (v - v')| \geq c - C \tau^q \, ,
%\]
%So, for $q$ sufficiently large, we are ensured that $m(T_{m - n + q}|_{T_{m -n}^q E_n(m - n)}) \geq c /2$.

We are now ready to apply Lemma \ref{lem:detReg}. Setting $M = \max\{K_1, 2 c_{\Ec}^{-1}\}$, let $L_2, \d_2$ be as in the conclusion of Lemma \ref{lem:detReg} for this choice of $M$ and for $m = k$. By Corollary \ref{cor:lowerEcContraction} and our control on \eqref{eq:subspaceEst}, there exists $Q_2 \geq Q_1$ sufficiently large so that for all $n > q \geq Q_2$, we have
\[
m(A_{T^{q } x}|_{A^q_{ x} E_n( x)}) \geq \frac{c_{\Ec}}{2} 
\]
and $\eqref{eq:subspaceEst} \leq \d_2$. Finally, Lemma \ref{lem:detReg} applies, yielding the estimate
\[
\bigg| \log \frac{\det (A_{T^{q } x} | \Ec (T^{q }x )) }{\det (A_{T^{q } x} | A^q_{ x} E_n( x)) }  \bigg| \leq L_2 d_H(\Ec(T^{q }x), A^q_{ x} E_n( x)) \leq L_2 K_2' \tau^q \, .
\]
Collecting,
\[
\bigg| \log \prod_{q = Q_2}^{n-1} \frac{\det (A_{T^{q} x} | \Ec (T^{q }x )) }{\det (A_{T^{q} x} | A^q_{ x} E_n( x)) } \bigg| \leq L_2 K_2' \sum_{q = Q_2}^{\infty}\tau^q \leq \frac{L_2 K_2' }{1 - \tau} \tau^{Q_2} \, .
\]

It now remains to estimate the initial terms $q < Q_2$. We have
\begin{align*}
\log \prod_{q = 0}^{Q_2 - 1} \frac{\det (A_{T^{q } x} | \Ec (T^{q }x )) }{\det (A_{T^{q } x} | A^q_{ x} E_n( x)) }  =&  \sum_{q = 0}^{Q_2 - 1} \log \det (A_{T^{q } x} | \Ec (T^{q }x )) \\ & - \sum_{q = 0}^{Q_2 - 1} \log \det (A_{T^{q } x} | A^q_{ x} E_n( x))  \, .
\end{align*}
The first term is $\geq  Q_2 k \log c_{\Ec}$, and the second term is $\geq - Q_2 k \log K_1$. This completes the proof.
\end{proof}

It is an immediate consequence of Proposition \ref{prop:distEst} that $A^n_x|_{\Ec(x)}$ realizes a positive proportion of the $k$-th Gelfand number $c_k(A^n_x)$.
\begin{cor}\label{cor:ckRealized}
For any $x \in X, n \geq 1$,
\[
m(A^n_x |_{\Ec(x)}) \geq C^{-1} r_{\Ec}  c_k(A^n_x) \, ,
\]
where $C > 1$ is a constant depending only on $k$, and $r_\Ec$ is as in the statement of Proposition \ref{prop:distEst}.
\end{cor}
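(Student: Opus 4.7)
The plan is to derive this directly from Proposition~\ref{prop:distEst} via the two-sided estimate between $V_k$ and the product of Gelfand numbers, following exactly the same template used to establish \eqref{eq:ckRealizedbyE} in the proof of Proposition~\ref{prop:genSVD}. First, iterating the bound \eqref{eq:prodGelfand} of Lemma~\ref{lem:gelfandProps} $k$ times (and using $V_1(B) = c_1(B) = |B|$) produces a constant $C_0 = C_0(k) \geq 1$ such that
\[
C_0^{-1} \prod_{i=1}^k c_i(B) \leq V_k(B) \leq C_0 \prod_{i=1}^k c_i(B)
\]
for every bounded operator $B$. I would apply this to both $B = A^n_x$ and $B = A^n_x|_{\Ec(x)}$, noting that $\det(A^n_x | \Ec(x)) = V_k(A^n_x|_{\Ec(x)})$ since $\Ec(x)$ is $k$-dimensional. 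Combined with Proposition~\ref{prop:distEst}, this chain of inequalities yields
\[
C_0^{-1} r_\Ec \prod_{i=1}^k c_i(A^n_x) \leq r_\Ec V_k(A^n_x) \leq \det(A^n_x | \Ec(x)) \leq C_0 \prod_{i=1}^k c_i(A^n_x|_{\Ec(x)}).
\]

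The second ingredient is the monotonicity $c_i(A^n_x|_{\Ec(x)}) \leq c_i(A^n_x)$ for every $i \leq k$; this follows directly from the definition of Gelfand numbers, since any $F \in \Gc^{i-1}(V)$ meets the $k$-dimensional subspace $\Ec(x)$ in a subspace of dimension at least $k-i+1$, which we can shrink to a subspace of $\Ec(x)$ in $\Gc^{i-1}(\Ec(x))$ on which $A^n_x$ has operator norm no larger than $|A^n_x|_F|$. Applying this monotonicity only in the factors $1 \leq i \leq k-1$ and cancelling the common positive factor $\prod_{i=1}^{k-1} c_i(A^n_x)$ (which is positive because $c_i(A^n_x) \geq c_k(A^n_x)$, and the conclusion is trivial otherwise) gives
\[
C_0^{-2} r_\Ec c_k(A^n_x) \leq c_k(A^n_x|_{\Ec(x)}) = m(A^n_x|_{\Ec(x)}),
\]
where the last equality holds because $\dim \Ec(x) = k$ forces $c_k(A^n_x|_{\Ec(x)})$ to be an infimum over one-dimensional subspaces of $\Ec(x)$. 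Setting $C = C_0^2$ concludes the proof. There is no real obstacle here; the argument is essentially a bookkeeping exercise combining Proposition~\ref{prop:distEst} with Lemma~\ref{lem:gelfandProps}, and the only minor point to verify is the monotonicity of Gelfand numbers under restriction to a finite-dimensional subspace.
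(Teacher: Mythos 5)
Your proof is correct and follows essentially the same route as the paper, which simply notes that the corollary follows from Lemma \ref{lem:gelfandProps} by the argument of \eqref{eq:ckRealizedbyE} in the proof of Proposition \ref{prop:genSVD}: compare $\det(A^n_x\,|\,\Ec(x)) = V_k(A^n_x|_{\Ec(x)})$ with $r_\Ec V_k(A^n_x)$ via the iterated bound \eqref{eq:prodGelfand}, cancel the first $k-1$ Gelfand numbers using $c_i(A^n_x|_{\Ec(x)}) \leq c_i(A^n_x)$, and identify $c_k(A^n_x|_{\Ec(x)})$ with $m(A^n_x|_{\Ec(x)})$ since $\dim \Ec(x) = k$. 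Your handling of the degenerate case and of the monotonicity of Gelfand numbers under restriction is the only bookkeeping the paper leaves implicit, and it is done correctly.
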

\noindent This Corollary follows from Lemma \ref{lem:gelfandProps}; the argument is essentially the same as that of \eqref{eq:ckRealizedbyE} in the beginning of the proof of Proposition \ref{prop:genSVD}.

\subsection{Finding the lower subspace $\Fc$}\label{subsec:lowerSubspace}

We are now in position to find the lower subspace $\Fc(x)$ and deduce its properties. We formulate all that remains of the proof of  Proposition \ref{prop:hardDirection} below as Lemma \ref{lem:lowerSubspace}.

Below, we set $r_{\Ec} = \frac12 e^{- K_2}$, the ratio of volume growth captured by $\Ec(x)$.

As promised in the beginning of Section 3, for each $x \in X, n \geq 1$, we apply Proposition \ref{prop:genSVD} to the operator $A^n_x$ and the subspace $\Ec(x)$, obtaining $k$-codimensional subspaces $\hat F_n(x), \hat F_n'(x)$ complementing $\Ec(x), \Ec(T^n x)$, respectively, for which
\begin{gather*}
A^n_x \hat F_n(x) \subset \hat F_n'(x) \, , \quad |A^n_x|_{\hat F_n(x)}| \leq \hat D c_{k + 1}(A^n_x) \, ,
\quad \text{ and } \quad |\hat P_n(x)|, |\hat P_n'(x)| \leq \hat D \, ,
\end{gather*}
($\hat P_n(x) = \pi_{\Ec(x) \ds F_n(x)}, \hat P_n'(x) = \pi_{\Ec(T^n x) \ds F_n'(x)}$) where $\hat D \geq D(k, r_{\Ec})$ (see Proposition \ref{prop:genSVD}) is chosen so that lastly, we have the estimate
\[
m(A^n_x |_{\Ec(x)}) \geq \hat D^{-1} c_k(A^n_x) 
\]
as in Corollary \ref{cor:ckRealized}.

\begin{lem}\label{lem:lowerSubspace}
For each $x \in X$, the limit
\[
\Fc(x) = \lim_{n \to \infty} \hat F_n(x)
\]
exists and has the following properties for each $x \in X$.
\begin{enumerate}
\item[(i)] $\Fc$ is complemented to $\Ec(x)$ with the bound $|\pi_{\Ec(x) \ds \Fc(x)}| \leq 2 \hat D$.
\item[(ii)] $\Fc$ is an equivariant distribution, i.e., $A_x \Fc(x) \subset \Fc(T x)$, 
\item[(iii)] $|A^n_x |_{\Fc(x)}| \leq C r_{\Ec}^{-1} \hat D c_{k + 1}(A^n_x)$ for all $n \geq 1$, where $C > 1$ depends only on $k$.
\item[(iv)] $x \mapsto \Fc(x)$ is continuous.
\end{enumerate}
\end{lem}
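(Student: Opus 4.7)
The strategy is to first establish that $\{\hat F_n(x)\}_{n\geq 1}$ is a uniformly Cauchy sequence in $\Gc^k(V)$, and then deduce each of (i)--(iv) from the resulting limit. For the Cauchy estimate, I plan to control $|\hat P_n(x)|_{\hat F_{n+1}(x)}|$, which by Remark \ref{rmk:gapEst} dominates $d_H(\hat F_n(x), \hat F_{n+1}(x))$ up to a factor of $4k$. Given a unit vector $v \in \hat F_{n+1}(x)$, decompose $v = \hat P_n(x)v + \hat Q_n(x) v$; then $A^{n+1}_x \hat P_n(x) v \in \Ec(T^{n+1}x)$ has norm at least $\hat D^{-1} c_k(A^{n+1}_x) |\hat P_n(x) v|$. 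But also $A^{n+1}_x \hat P_n(x) v = A^{n+1}_x v - A^{n+1}_x \hat Q_n(x) v$, and the right-hand side has norm at most $\hat D c_{k+1}(A^{n+1}_x) + K_1\hat D(1+\hat D) c_{k+1}(A^n_x)$, using $|A^{n+1}_x v|\leq \hat D c_{k+1}(A^{n+1}_x)$ (since $v \in \hat F_{n+1}(x)$) and $|A^{n+1}_x \hat Q_n(x) v|\leq |A_{T^nx}|\cdot|A^n_x|_{\hat F_n(x)}|\cdot|\hat Q_n(x)v|$. A short application of \eqref{eq:restateMagic}, combined with $c_k(A^{n+2}_x)\leq K_1 c_k(A^{n+1}_x)$, yields $c_{k+1}(A^{n+1}_x)/c_k(A^{n+1}_x) \leq KK_1\tau^{n+1}$ and $c_{k+1}(A^n_x)/c_k(A^{n+1}_x)\leq K\tau^n$, whence $|\hat P_n(x) v| \lesssim \tau^n$ uniformly in $x$. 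The limit $\Fc(x) := \lim \hat F_n(x) \in \Gc^k(V)$ then exists with uniform rate. For (i), Lemma \ref{lem:openCond} applied to the pair $(\hat F_n(x), \Ec(x))$ (with $\hat F_n(x)$ perturbed to $\Fc(x)$) gives, for $n$ large, that $\Fc(x)$ and $\Ec(x)$ are complementary with $|\pi_{\Fc(x)\ds\Ec(x)}| \leq 1 + \hat D$ in the limit, so $|\pi_{\Ec(x)\ds\Fc(x)}| = |\id - \pi_{\Fc(x)\ds\Ec(x)}| \leq 2\hat D$ (assuming $\hat D \geq 2$, which may be arranged).

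For equivariance (ii), given $v \in \Fc(x)$ and unit $v_m \in \hat F_m(x)$ with $v_m \to v$, the plan is to show $d(A_x v_m, \hat F_{m-1}(Tx)) \lesssim \tau^{m-1}$; this, combined with $A_x v_m \to A_x v$ and $\hat F_{m-1}(Tx) \to \Fc(Tx)$, forces $A_x v \in \Fc(Tx)$. The argument: $A^m_x v_m = A^{m-1}_{Tx}(A_x v_m) \in \hat F_m'(x)$ has norm at most $\hat D c_{k+1}(A^m_x)$, and projecting this onto $\Ec(T^m x)$ along $\hat F_{m-1}'(Tx)$ yields $A^{m-1}_{Tx} \hat P_{m-1}(Tx)(A_x v_m) \in \Ec(T^m x)$, whose norm is at least $\hat D^{-1} c_k(A^{m-1}_{Tx})\cdot|\hat P_{m-1}(Tx)(A_x v_m)|$. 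Hence $|\hat P_{m-1}(Tx)(A_x v_m)| \lesssim c_{k+1}(A^m_x)/c_k(A^{m-1}_{Tx})$, and a short application of \eqref{eq:restateMagic} (using $c_k(A^m_x) \leq K_1 c_k(A^{m-1}_{Tx})$ and $c_{k+1}(A^m_x) \leq K_1 c_{k+1}(A^{m-1}_{Tx})$) shows this ratio is $\lesssim \tau^{m-1}$.

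For the growth bound (iii), I will use a $(k+1)$-dimensional volume comparison. For unit $v \in \Fc(x)$, Lemma \ref{lem:detSplit} applied to $\Ec(x) \oplus \langle v \rangle$ with $G = \langle v \rangle$, $H = \Ec(x)$, $l = 1$ gives
\[
\det(A^n_x | \Ec(x) \oplus \langle v \rangle) \gtrsim \sin\theta(\langle A^n_x v \rangle, \Ec(T^n x)) \cdot |A^n_x v| \cdot \det(A^n_x | \Ec(x)).
\]
Meanwhile, $\det(A^n_x | \Ec(x) \oplus \langle v \rangle) \leq V_{k+1}(A^n_x) \lesssim c_{k+1}(A^n_x) V_k(A^n_x) \leq r_\Ec^{-1} c_{k+1}(A^n_x) \det(A^n_x | \Ec(x))$, using Lemma \ref{lem:gelfandProps} and Proposition \ref{prop:distEst}. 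Iterating (ii) gives $A^n_x v \in \Fc(T^n x)$, so $\sin\theta(\langle A^n_x v \rangle, \Ec(T^n x)) \geq \sin\theta(\Fc(T^n x), \Ec(T^n x)) \geq (1+\hat D)^{-1}$ by (i). Combining yields $|A^n_x v| \lesssim r_\Ec^{-1} c_{k+1}(A^n_x)$, as required.

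Continuity (iv) will follow from a Cauchy-style argument paralleling Lemma \ref{lem:contUpperSubspace}: given $\varepsilon > 0$, choose $N$ so that $d_H(\hat F_n(x), \Fc(x)) < \varepsilon/3$ uniformly in $x \in X$ for $n \geq N$; then for $y$ sufficiently close to $x$, compare $\hat F_N(x)$ and $\hat F_{N+1}(y)$ directly by a variant of the Cauchy computation above, with the additional error absorbed by the operator-norm continuity of $x \mapsto A^{N+1}_x$. A triangle inequality then delivers $d_H(\Fc(x), \Fc(y)) < \varepsilon$. I expect the main obstacle to be (iii), which depends both on having the equivariance (ii) in hand and on the quantitative volume-capture from Proposition \ref{prop:distEst}; the other three items are essentially extensions of the uniform Cauchy estimate supplemented by Lemma \ref{lem:openCond} and the projection calculus.
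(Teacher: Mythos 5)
Your proposal is correct and follows the paper's proof in all essentials: the uniform Cauchy estimate via a projection bound plus Remark \ref{rmk:gapEst}, Lemma \ref{lem:openCond} for item (i), the same equivariance trick for (ii) (project $A_xv_m$ with $\hat P_{m-1}(Tx)$, intertwine, and use \eqref{eq:restateMagic} together with $c_j(BC)\leq |B|c_j(C)$-type bounds), and the identical determinant argument for (iii) combining Lemma \ref{lem:detSplit}, Lemma \ref{lem:gelfandProps} and Proposition \ref{prop:distEst}. The only genuine deviations are minor: in the Cauchy step you bound $|\hat P_n(x)|_{\hat F_{n+1}(x)}|$ (requiring the extra decomposition $v=\hat P_n(x)v+\hat Q_n(x)v$), whereas the paper bounds $|\hat P_{n+1}(x)|_{\hat F_n(x)}|$, which is slightly shorter; and for continuity (iv) the paper compares the limits $\Fc(x),\Fc(y)$ directly, estimating $|\pi_x|_{\Fc(y)}|$ by means of item (iii) applied at $y$ and the $1$-Lipschitz continuity of Gelfand numbers, while you compare the approximants $\hat F_N(x)$ and $\hat F_{N+1}(y)$ (using only the defining bound $|A^{N+1}_y|_{\hat F_{N+1}(y)}|\leq \hat D c_{k+1}(A^{N+1}_y)$ from Proposition \ref{prop:genSVD}) and conclude by uniform convergence and the triangle inequality, in the style of Lemma \ref{lem:contUpperSubspace}. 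Both routes for (iv) go through with the estimates already available, so the difference is one of bookkeeping rather than substance.
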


\begin{proof}
As is standard by now, we estimate $d_H(F_n(x), F_{n + 1}(x))$ by bounding $|\hat P_{n + 1}(x) |_{F_n(x)}|$. Fixing $v \in F_n(x), |v | =1 $, we estimate
\begin{align*}
\hat D^{-1} c_k(A^{n + 1}_x)  |\hat P_{n + 1}(x) v| &  \leq |A^{n + 1}_x \circ \hat P_{n + 1} (x) v| = |\hat P_{n + 1}'(x) \circ A_{T^{ n} x} \circ A_x^n v| \\
 & \leq |\hat P_{n + 1}'(x)| \cdot |A_{T^n x}| \cdot |A^n_x v| \leq \hat D^2 K_1 c_{k +1}(A^n_x) \, ,
\end{align*}
concluding that
\[
d_H(\hat F_n(x), \hat F_{n + 1}(x)) \leq 4 k \hat D^3 K_1 K \tau^n 
\]
by Remark \ref{rmk:gapEst}. As before, this geometric bound ensures that $\{\hat F_n(x)\}$ is Cauchy, and so possesses a $k$-codimensional limit $\Fc(x)$.

\medskip

\noindent {\it Proof of Item (i)} We now show that $\Ec(x), \Fc(x)$ are complemented, and control $|\pi_{\Ec(x) \ds \Fc(x)}|$. We obtain these easily from the estimate
\[
d_H(\hat F_n(x), \Fc(x)) \leq \frac{4 k \hat D^3 K_1 K }{1 - \tau} \tau^n 
\]
and Lemma \ref{lem:openCond}, which implies
\[
|\pi_{\Fc(x) \ds \Ec(x)}| \leq \frac{|\pi_{\hat F_n(x) \ds \Ec(x)} | }{1 - |\pi_{\hat F_n(x) \ds \Ec(x)} | d_H(\hat F_n(x), \Fc(x))}
\]
for all $n$ large enough that the right-hand side is well-defined and positive. Taking $n \to \infty$ yields the advertised estimate.

\medskip

\noindent {\it Proof of Item (ii)} To show $A_x \Fc(x) \subset \Fc(T x)$, fix $v \in \Fc(x)$ with $|v| = 1$; similarly to before, we shall estimate $|\hat P_{n + 1}(T x ) A_x v|$. For each $n$, let $v_n \in \hat F_n(x)$ be a unit vector such that $|v - v_n| \lesssim \tau^n$. Then,
\begin{align*}
\hat D^{-1} c_k(A^n_{T x }) |\hat P_{n }(T x) A_x v_{n + 1}| & \leq |A^n_{T x} \hat P_{n}(T x) A_x v_{n + 1}| = |\hat P_n(T x) A^{n + 1}_x v_{n + 1}| \\
& \leq \hat D^2 c_{k + 1}(A^{n + 1}_x) \, ,
\end{align*}
hence
\[
|\hat P_{n }(T x) A_x v_{n + 1}| \leq \hat D^3 \frac{c_{k + 1}(A^{n + 1}_x)}{c_k(A^n_{T x})} \leq \hat D^3 K_1^2 K \tau^n \, .
\]
Like before, this implies $A_x v \in \Fc(T x)$.

\medskip

\noindent {\it Proof of Item (iii)} Fix $v \in \Fc(x), |v| = 1$. We estimate
\begin{align*}
V_{k + 1} (A^n_x) & \geq \det(A^n_x | \Ec(x) \oplus \langle v \rangle ) \\
& \geq C^{-1} |\pi_{\Ec(T^n x) \ds \Fc(T^n x)}|^{-1} \det(A^n_x | \Ec(x)) \cdot |A^n_x v|  & \text{ by Lemma \ref{lem:detSplit},} \\
& \geq C^{-1} \hat D^{-1} r_{\Ec} V_k(A^n_x) |A^n_x v| \, &  \text{ by Proposition \ref{prop:distEst}.}
\end{align*}
The desired estimate now follows from Lemma \ref{lem:gelfandProps}.

\medskip

\noindent{\it Proof of Item (iv)} For brevity, we write $\pi_x = \pi_{\Ec(x) \ds \Fc(x)}$. Fix $x \in X$ and $\varepsilon>0$, and let $n \geq 1$ be large enough that
\[4kK_1\hat{D}^2  \big( 2 + (1 + K) C r_\Ec^{-1} \big) \tau^n<\varepsilon.\]
Define $U_x\subseteq X$ to be the set of all $y \in X$ such that
\[|A^n_x-A^n_y|<\tau^nc_k(A^{n+1}_x)\]
which is clearly an open neighbourhood of $x$; we will show that if $y \in U_x$ then $d_H(\Fc(x),\Fc(y))<\varepsilon$.
 
Let $y \in U_x$, and fix $v \in \Fc(y)$ such that $|v| = 1$. We may estimate
\begin{align*}
\hat D^{-1} c_k(A^{n+1}_x) |\pi_x v| & \leq |A^{n+1}_x \circ \pi_x v|  & \text{by Corollary \ref{cor:ckRealized},} \\
& \leq K_1|A^n_x\circ \pi_x v| =K_1|\pi_{T^n x} \circ A^n_x v|\\
& \leq 2 K_1\hat D |A^n_x - A^n_{y}| + K_1|A^n_y v| \\
& <2K_1\hat D \tau^nc_k(A^{n+1}_x)+ CK_1 r_{\Ec}^{-1} \hat D c_{k + 1}(A^n_{y}) & \text{using Item (iii)}\\
&< K_1\hat{D}(2+(1 + K) Cr_{\Ec}^{-1})\tau^nc_k(A^{n+1}_x),
\end{align*}
where in the last line we have used the inequality
\[c_{k+1}(A^{n}_y)<c_{k+1}(A^{n}_x)+\tau^nc_k\left(A^{n+1}_x\right)<(1 + K) \tau^nc_k\left(A^{n+1}_x\right)\]
which follows from the definition of $U_x$ and the 1-Lipschitz continuity of Gelfand numbers. Taking the supremum over $v \in \Fc(y)$ with $|v|=1$ yields the estimate 
\[
|\pi_x|_{\Fc(y)}| \leq K_1\hat{D}^2  \big( 2 + (1 + K) C r_\Ec^{-1} \big) \tau^n<\frac{\varepsilon}{4k} ,
\]
and so by Remark \ref{rmk:gapEst}
\[d_H(\Fc(x),\Fc(y))<\varepsilon\]
as required.
\end{proof}

\subsection{Converse to Proposition \ref{prop:hardDirection}}\label{subsec:converse}

In this subsection we prove the following `converse' to Proposition \ref{prop:hardDirection}, thereby completing the proof of Theorem \ref{thm:main}.

\begin{lem}
Let $A : X \times \N \to L(V)$ be a continuous cocycle of bounded, injective linear operators on a compact topological space $X$ for which there exists a continuous, $A$-equivariant splitting $V = \Ec (x) \oplus \Fc(x)$ into $k$-(co)dimensional subspaces with the property that
\[\sup_{\substack{u \in \Ec(x), \,v \in \Fc(x)\\ |u|=|v|=1}} \frac{|A_x^n v |}{| A_x^n u| } \leq K\tau^n \, ,\]
where $K > 0$ and $\tau \in (0,1)$ are constants. Then, there exists a constant $ K' > 0$ for which
\[
%\label{eq:magic} 
\max\{c_{k+1}(A_x^n),c_{k+1}(A_{Tx}^n))\} <  K' \tau^n c_{k}(A_x^{n + 1}) 
\]
for any $x \in X, n \in \N$.
\end{lem}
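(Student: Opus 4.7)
The idea is to bound $c_{k+1}(A^n_x)$ by the norm of $A^n_x$ restricted to the $k$-codimensional closed subspace $\Fc(x)$, bound this quantity via the domination inequality in terms of $m(A^n_x|_{\Ec(x)})$, and finally bound $m(A^n_x|_{\Ec(x)})$ above by $c_k(A^{n+1}_x)$ using the fact that $\Ec$ is a continuous $k$-dimensional equivariant distribution with a uniform lower bound on contraction (as in Corollary \ref{cor:lowerEcContraction}).

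First I would set up the constants. Continuity of $x\mapsto A_x$ and compactness of $X$ give $K_1:=\sup_{x\in X}|A_x|<\infty$. Continuity of $\Ec$ together with injectivity of $A_x$ yields, by the same argument as Corollary \ref{cor:lowerEcContraction} (which used only continuity and injectivity, not the hypothesis \eqref{eq:magic}), a uniform constant $c_\Ec>0$ with $m(A_z|_{\Ec(z)})\geq c_\Ec$ for every $z\in X$. The Gelfand/Kolmogorov comparison of Lemma \ref{lem:gelfandProps}(1) gives a constant $C=C(k)$ with $\knum_k(B)\leq C\, c_k(B)$ for every $B\in L(V)$; by the definition of $\knum_k$ and since $\dim\Ec(x)=k$, this yields $m(B|_{\Ec(x)})\leq C\, c_k(B)$ for every such $B$.

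The first bound, on $c_{k+1}(A^n_x)$, proceeds as follows. Since $\Fc(x)\in\Gc^k(V)$, the definition of the Gelfand number immediately yields $c_{k+1}(A^n_x)\leq|A^n_x|_{\Fc(x)}|$. Taking the infimum over unit $u\in\Ec(x)$ in the dominated splitting inequality gives $|A^n_x|_{\Fc(x)}|\leq K\tau^n\, m(A^n_x|_{\Ec(x)})$. To replace $m(A^n_x|_{\Ec(x)})$ by $c_k(A^{n+1}_x)$, use equivariance of $\Ec$: for any unit $u\in\Ec(x)$ the vector $A^n_x u$ lies in $\Ec(T^n x)$, so
\[
|A^{n+1}_x u|=|A_{T^n x}(A^n_x u)|\geq m(A_{T^n x}|_{\Ec(T^n x)})\,|A^n_x u|\geq c_\Ec\,|A^n_x u|.
\]
Taking infima, $m(A^n_x|_{\Ec(x)})\leq c_\Ec^{-1}m(A^{n+1}_x|_{\Ec(x)})\leq c_\Ec^{-1}C\, c_k(A^{n+1}_x)$. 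Combining gives $c_{k+1}(A^n_x)\leq KCc_\Ec^{-1}\tau^n c_k(A^{n+1}_x)$, so $K':=KCc_\Ec^{-1}$ works.

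The second bound, on $c_{k+1}(A^n_{Tx})$, is analogous but exploits the factorisation $A^{n+1}_x=A^n_{Tx}\circ A_x$. Again $c_{k+1}(A^n_{Tx})\leq|A^n_{Tx}|_{\Fc(Tx)}|\leq K\tau^n\, m(A^n_{Tx}|_{\Ec(Tx)})$. Since $A_x|_{\Ec(x)}\colon\Ec(x)\to\Ec(Tx)$ is a linear isomorphism (both spaces are $k$-dimensional and $A_x$ is injective), given a near-minimiser $\tilde u\in\Ec(x)$ with $|\tilde u|=1$ and $|A^{n+1}_x\tilde u|\simeq m(A^{n+1}_x|_{\Ec(x)})$, set $v:=A_x\tilde u/|A_x\tilde u|\in\Ec(Tx)$; then
\[
m(A^n_{Tx}|_{\Ec(Tx)})\leq|A^n_{Tx}v|=\frac{|A^{n+1}_x\tilde u|}{|A_x\tilde u|}\leq c_\Ec^{-1}m(A^{n+1}_x|_{\Ec(x)})\leq Cc_\Ec^{-1}c_k(A^{n+1}_x),
\]
using $|A_x\tilde u|\geq c_\Ec$ and the Kolmogorov/Gelfand comparison as above. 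This gives the same constant $K'$. The only non-routine step is the uniform lower bound $c_\Ec$ on $m(A_z|_{\Ec(z)})$, which is a compactness/continuity argument identical to Corollary \ref{cor:lowerEcContraction}; everything else is direct manipulation of the definitions of $c_{k+1}$, $\knum_k$, and the domination hypothesis.
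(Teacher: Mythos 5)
Your argument is correct and follows essentially the same route as the paper's proof: bound $c_{k+1}$ by $|A^n_x|_{\Fc(x)}|$, use the domination inequality to compare with $m(A^n_x|_{\Ec(x)})$, pass from time $n$ to $n+1$ via the uniform bound $\sup_x|(A_x|_{\Ec(x)})^{-1}|<\infty$ (your $c_\Ec^{-1}$), and finish with the Kolmogorov/Gelfand comparison of Lemma \ref{lem:gelfandProps}. The only difference is that you write out the $c_{k+1}(A^n_{Tx})$ case which the paper omits as "similar," and your treatment of it (using the factorisation $A^{n+1}_x=A^n_{Tx}\circ A_x$ and a minimiser on the compact unit sphere of the finite-dimensional $\Ec(x)$) is correct.
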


\begin{proof}
First, we observe that from the continuity of $x \mapsto A_x$ in norm, the continuity of $x \mapsto \Ec(x)$ in $\Gc_k(V)$, and the compactness of $X$, we obtain
\[
\sup_{x \in X}|\big(A_x|_{\Ec(x)} \big)^{-1}| = C_0 < \infty \, .
\]
Equipped with this, for $x \in X$ we easily obtain the estimates
\[
|\big(A^{n + 1}_x|_{\Ec(x)} \big)^{-1} | \leq C_0 \min\{ | \big(A^n_x|_{\Ec(x)} \big)^{-1}| , | \big(A^n_{Tx}|_{\Ec(Tx)} \big)^{-1}| \} \, ,
\]
where $m(\cdot)$ is as defined in the beginning of \S \ref{sec:geometry}. We now bound
\begin{align*}
c_{k + 1}(A^n_x) & \leq |A^n_x|_{\Fc(x)}|   \leq K \tau^n m(A^n_x|_{\Ec(x)}) \\
& \leq K C_0 \tau^n m(A^{n + 1}_x|_{\Ec(x)} )  \leq K C_0  \tau^n \knum_k(A^{n + 1}_x) \\
& \leq K C_0 C \tau^n c_k(A^{n + 1}_x) & \text{ by } \eqref{eq:alternativeSnumber} \, .
\end{align*}
Above, the constant $C  > 0$ depends only on $k$, and is the same as appears in Lemma \ref{lem:gelfandProps}. Setting $K' = K C_0 C$ yields the desired upper bound for $c_{k + 1}(A^n_x)$; the analogous estimate for $c_{k + 1}(A^n_{T x})$ proceeds similarly and is omitted.
\end{proof}

%%%%%%%%%%%%%%%%%%%%%%%%%%%%%%%%%
%%%%%%%%%%%%%%%%%%%%%%%%%%%%%%%%%
%  END OF SECTION
%%%%%%%%%%%%%%%%%%%%%%%%%%%%%%%%%
%%%%%%%%%%%%%%%%%%%%%%%%%%%%%%%%%

\section{Dominated splittings for strongly continuous cocycles}\label{sec:continuous}

In this section we deduce Theorem \ref{thm:continuousTime} from Theorem \ref{thm:main}. Due to a need for additional subscripts, in this section we modify our notation for cocycles $A^n_x$ and $B_x^t$ (in discrete and continuous time respectively), denoting these objects instead by $A(x,n)$ and $B(x,t)$.  Throughout this section we assume that $X$ and $B(x,t)$ satisfy the hypotheses of Theorem \ref{thm:continuousTime}.

We collect some lemmas which will be useful in the proof:
\begin{lem}\label{lem:sot}
There exists a constant $C>0$ such that $|B(x,t)|\leq C$ for all $x\in X$ and $t \in [0,1]$.
\end{lem}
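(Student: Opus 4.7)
The plan is to obtain the uniform bound via the Banach--Steinhaus (uniform boundedness) principle, leveraging strong continuity to get pointwise boundedness first.

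First, I would fix an arbitrary $v \in V$ and consider the map $(x,t) \mapsto B(x,t)v$ from $X \times [0,1]$ to $V$. By hypothesis \eqref{eq:continuityHypotheses}, the cocycle $(x,t) \mapsto B(x,t)$ is continuous in the strong operator topology, which by definition means precisely that this vector-valued map is continuous for every $v$.

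Since $X \times [0,1]$ is compact (as $X$ is compact and $[0,1]$ is compact), the image $\{B(x,t)v : x \in X, t \in [0,1]\}$ is a compact subset of $V$, hence norm-bounded. That is, for every $v \in V$,
\[
\sup_{x \in X,\; t \in [0,1]} |B(x,t)v| < \infty.
\]
The family $\{B(x,t) : x \in X, t \in [0,1]\} \subset L(V)$ is thus pointwise bounded on the Banach space $V$. By the uniform boundedness principle, there exists a constant $C > 0$ such that $|B(x,t)| \leq C$ for all $x \in X$ and $t \in [0,1]$, which is the desired conclusion.

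There is no real obstacle here: the only subtlety is remembering that strong operator continuity in $(x,t)$ jointly is exactly what allows us to conclude compactness (and hence boundedness) of the orbit $\{B(x,t)v\}$ via a continuous image of a compact set, and that pointwise boundedness upgrades to uniform operator-norm boundedness via Banach--Steinhaus. Norm continuity in $x$ alone for each fixed $t$ would not suffice to handle the joint supremum over $t \in [0,1]$, so the strong joint continuity hypothesis is essential.
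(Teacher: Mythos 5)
Your proof is correct and is essentially identical to the paper's: both fix $v \in V$, use the joint strong continuity together with compactness of $X \times [0,1]$ to get pointwise boundedness of the family $\{B(x,t)\}$, and then conclude by the Banach--Steinhaus theorem.
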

\begin{proof}[Proof of Lemma \ref{lem:sot}]
For each $v \in V$ the compactness of $X \times  [0,1]$ and strong continuity of $(x,t) \mapsto B(x,t)$ implies
\[\sup_{x \in X}\sup_{0 \leq t \leq 1} |B(x,t)v|<\infty.\]
The result follows by the Banach-Steinhaus Theorem.
\end{proof}

\begin{lem}\label{lem:smalltControl}
Suppose that $\Ec \colon X \to \Gc_k(V)$ is continuous. Then there exists $c>0$ such that 
\begin{align}\label{eq:bounded01below}
\inf_{0 \leq t \leq 1} m \big(B(x,t)|_{\Ec(x)} \big) \geq c > 0 \, ,
\end{align}
for every $x \in X$.
\end{lem}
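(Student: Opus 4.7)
The plan is to prove Lemma \ref{lem:smalltControl} by contradiction, using compactness of $X \times [0,1]$ together with the finite-dimensionality of $\Ec(x)$ to promote pointwise (strong) continuity into a uniform lower bound.

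Suppose for contradiction that no such $c > 0$ exists. Then there exist sequences $x_n \in X$, $t_n \in [0,1]$, and unit vectors $u_n \in \Ec(x_n)$ with $|B(x_n,t_n) u_n| \to 0$. By compactness of $X \times [0,1]$, I would pass to a subsequence so that $(x_n,t_n) \to (x_\infty, t_\infty)$. Continuity of $\Ec : X \to \Gc_k(V)$ gives $d_H(\Ec(x_n), \Ec(x_\infty)) \to 0$, so from the definition of $d_H$ there exist unit vectors $v_n \in \Ec(x_\infty)$ with $|u_n - v_n| \to 0$. Since $\Ec(x_\infty)$ is $k$-dimensional, its unit sphere is compact, so after a further subsequence $v_n \to v_\infty$ for some unit vector $v_\infty \in \Ec(x_\infty)$; consequently $u_n \to v_\infty$ as well.

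Now I would estimate
\[
|B(x_n, t_n) u_n - B(x_\infty, t_\infty) v_\infty| \leq |B(x_n,t_n)| \cdot |u_n - v_\infty| + |B(x_n,t_n) v_\infty - B(x_\infty, t_\infty) v_\infty|.
\]
The first term tends to zero because $|B(x_n,t_n)| \leq C$ uniformly by Lemma \ref{lem:sot} and $|u_n - v_\infty| \to 0$. The second term tends to zero by the joint strong continuity of $(x,t) \mapsto B(x,t)$ applied to the fixed vector $v_\infty$. Hence $|B(x_n,t_n) u_n| \to |B(x_\infty,t_\infty) v_\infty|$. But $v_\infty \neq 0$ and $B(x_\infty, t_\infty)$ is injective, so the right-hand side is strictly positive, contradicting $|B(x_n,t_n) u_n| \to 0$.

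The essential point, and the only slightly delicate one, is the legitimacy of combining the strong-operator convergence (which only controls $B(x_n,t_n) v$ for fixed $v$) with the moving vectors $u_n \in \Ec(x_n)$. Lemma \ref{lem:sot} provides the uniform norm bound needed to absorb the difference $u_n - v_\infty$, and finite-dimensionality of $\Ec(x_\infty)$ provides the compactness of the unit sphere that lets us pass to a limit of the approximating vectors $v_n$; without both ingredients the argument would fail in the Banach space setting.
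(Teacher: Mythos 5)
Your argument is sound in all of its analytic ingredients (the uniform bound of Lemma \ref{lem:sot} to absorb the moving vectors $u_n$, strong continuity applied only at the fixed limit vector $v_\infty$, compactness of the unit sphere of the finite-dimensional space $\Ec(x_\infty)$, and injectivity of $B(x_\infty,t_\infty)$, which is indeed available since the standing hypotheses of Theorem \ref{thm:continuousTime} are in force throughout this section). However, there is one step that fails in the stated generality: you pass to a convergent \emph{subsequence} of $(x_n,t_n)$ in $X\times[0,1]$. The theorem only assumes that $X$ is a compact topological space, and compactness does not imply sequential compactness in the absence of metrizability (the authors are deliberately working without a metric on $X$, and all their compactness arguments are finite-subcover arguments). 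So "by compactness, pass to a subsequence with $(x_n,t_n)\to(x_\infty,t_\infty)$" is not justified as written; it would be fine if $X$ were a compact metric space, but not here.

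The gap is repairable without changing your strategy: run the contradiction argument with nets rather than sequences (every net in a compact space has a convergent subnet), noting that every step you use — continuity of $x\mapsto\Ec(x)$, the uniform bound $|B(x,t)|\leq C$, strong continuity at the fixed vector $v_\infty$, and extraction of a convergent subnet of the approximating unit vectors inside the compact sphere of $\Ec(x_\infty)$ — is valid for nets. Alternatively, one can argue as the paper does, directly and without contradiction: for each fixed $(x,t)$ one sets $\delta=m(B(x,t)|_{\Ec(x)})>0$, chooses a finite $\delta/4C$-net of the unit sphere of $\Ec(x)$, and uses strong continuity on these finitely many test vectors together with continuity of $\Ec$ to produce an open neighbourhood of $(x,t)$ on which $m(B(y,s)|_{\Ec(y)})\geq\delta/4$; a finite subcover of $X\times[0,1]$ then yields the uniform constant $c$. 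The finite $\varepsilon$-net there plays exactly the role that sequential extraction plays in your version, but only uses covering compactness, which is what the hypotheses provide.
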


\begin{proof}[Proof of Lemma \ref{lem:smalltControl}]
%We first show \eqref{eq:bounded01}. For each fixed $v \in V$, the mapping $(x, t) \mapsto |B(x,t) v|$ is continuous as a map $X \times [0,1] \to \R$, hence bounded, and so applying the Uniform Boundedness Principle to $\{B(x,t) : x \in X, t \in [0,1]\}$, we obtain that
%\[
%\sup_{\substack{x \in X \\ 0 \leq t \leq 1}} |B(x,t)| \leq C 
%\]
%for some $C > 0$, establishing \eqref{eq:bounded01}.
Since $X \times [0,1]$ is compact it suffices to prove the following: for each $(x,t) \in X \times [0,1]$ there exist an open neighbourhood $U_{x,t}$ of $(x,t)$ and a constant $c_{x,t}>0$ such that for all $(y,s) \in U_{x,t}$ we have $|B(y,s)u|>c_{x,t}$ for every unit vector $u \in \Ec(y)$.

Let $C>0$ be as in Lemma \ref{lem:sot}. Fix $(x,t) \in X \times [0,1]$, let $\delta:=m(B(x,t)|_{\Ec(x)})>0$, and define $c_{x,t}:=\delta/4$. Let $v_1,\ldots,v_n \in V$ be a $\delta/4C$-net for the unit sphere of $\Ec(x)$, and define $U_{x,t}$ to be the set of all $(y,s) \in X \times [0,1]$ such that
\begin{gather*}
\max_{1 \leq r \leq n}\left|B(y,s)v_r-B(x,t)v_r\right|<\frac{\delta}{4} \, , \text{ and}  \\
 d_H(\Ec(x),\Ec(y))<\frac{\delta}{4C} \, .
\end{gather*}
Since $(y,s) \mapsto B(y,s)$ is strongly continuous, each map $(y,s) \mapsto B(y,s)v_r$ is continuous, so $U_{x,t}$ is an open neighbourhood of $(x,t)$.

Let $(y,s) \in U_{x,t}$, let $u \in \Ec(y)$ be a unit vector, and choose a unit vector $v \in \Ec(x)$ such that $|u-v|\leq d_H(\Ec(x),\Ec(y))$. Let $v_r \in \Ec(x)$ be a unit vector such that $|v-v_r|<\delta/4C$, and note that $|u-v_r|<\delta/2C$. We have
\[\big||B(x,t)v_r|-|B(y,s)u|\big|\leq \big||B(x,t)v_r|-|B(y,s)v_r|\big|+\big||B(y,s)v_r|-|B(y,s)u|\big|<\frac{3\delta}{4}\]
and therefore
\[|B(y,s)u|>|B(x,t)v_r|-\frac{3\delta}{4} \geq m(B(x,t)|_{\Ec(x)})-\frac{3\delta}{4}=\frac{\delta}{4}=c_{x,t}>0.\]
The proof is complete. 
\end{proof}
\begin{lem}\label{lem:uniquesplit}
Let $T\colon X \to X$ be a homeomorphism and $A\colon X \times \mathbb{N} \to L(V)$ a continuous linear cocycle of injective operators. Suppose that there exists a pair of continuous equivariant splittings $V=\Ec_1(x)\oplus \Fc_1(x)=\Ec_2(x)\oplus \Fc_2(x)$ of $V$ into closed subspaces such that for all $x \in X$ and $n \geq 1$,
\[|A(x,n)|_{\Fc_i(x)}| \leq Kc_{k+1}(A(x,n)),\]
\[m(|A(x,n)|_{\Ec_i(x)}|) \geq K^{-1}c_{k}(A(x,n)),\]
\[c_{k+1}(A(x,n))\leq K\tau^nc_k(A(x,n))\]
for $i=1,2$, where $K>0$ and $\tau \in (0,1)$ are constants. Then $\Ec_1\equiv \Ec_2$ and $\Fc_1\equiv \Fc_2$.
\end{lem}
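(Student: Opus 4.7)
The plan is to prove the two inclusions $\Fc_1(x) \subseteq \Fc_2(x)$ and $\Ec_1(x) \subseteq \Ec_2(x)$ for every $x \in X$; the reverse inclusions then follow by the same arguments with the roles of the indices $1$ and $2$ swapped. The $\Fc$-case uses only forward iteration. Fixing a unit vector $v \in \Fc_1(x)$ and decomposing it as $v = e + f$ with $e \in \Ec_2(x)$ and $f \in \Fc_2(x)$, I would apply $A(x,n)$ and use equivariance to place $A(x,n) v \in \Fc_1(T^n x)$, $A(x,n) e \in \Ec_2(T^n x)$, and $A(x,n) f \in \Fc_2(T^n x)$. The hypotheses give $|A(x,n) v| \leq K c_{k+1}(A(x,n))$, $|A(x,n) f| \leq K c_{k+1}(A(x,n)) |f|$, and $|A(x,n) e| \geq K^{-1} c_k(A(x,n)) |e|$. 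Combining these via $A(x,n) e = A(x,n) v - A(x,n) f$, the triangle bound $|f| \leq 1 + |e|$, and the gap hypothesis $c_{k+1}(A(x,n)) \leq K \tau^n c_k(A(x,n))$ leads to an inequality of the form $|e| \leq K^3 \tau^n(2 + |e|)$, which forces $e = 0$ upon letting $n \to \infty$.

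The inclusion $\Ec_1(x) \subseteq \Ec_2(x)$ is more subtle: a direct forward-iteration argument is obstructed by the absence of any useful upper bound on $|A(x,n)|_{\Ec_i(x)}|$. Instead I would iterate backwards, using that $T$ is a homeomorphism. For a unit vector $v \in \Ec_1(x)$ decomposed as $v = e + f$ in $\Ec_2(x) \oplus \Fc_2(x)$, equivariance ($A(T^{-n} x, n) \Ec_i(T^{-n} x) = \Ec_i(x)$) together with injectivity makes $A(T^{-n} x, n)|_{\Ec_i(T^{-n} x)}$ a bijection onto $\Ec_i(x)$, so $v$ and $e$ admit unique preimages $\tilde v \in \Ec_1(T^{-n} x)$ and $\tilde e \in \Ec_2(T^{-n} x)$. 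The expansion hypothesis yields $|\tilde v|, |\tilde e| \leq K c_k(A(T^{-n} x, n))^{-1} \max\{1, |e|\}$. The key observation is that $\tilde v - \tilde e$ lies in $\Fc_2(T^{-n} x)$: indeed $A(T^{-n} x, n)(\tilde v - \tilde e) = v - e = f \in \Fc_2(x)$, and if one decomposes $\tilde v - \tilde e$ in $\Ec_2 \oplus \Fc_2$, uniqueness forces the $\Ec_2$-component to lie in $\ker A(T^{-n} x, n)$ and therefore vanish by injectivity. The contraction bound $|f| \leq K c_{k+1}(A(T^{-n} x, n)) |\tilde v - \tilde e|$ combined with the upper estimates on $|\tilde v|$ and $|\tilde e|$ and the gap hypothesis yields $|f| \leq K^3 \tau^n(1 + |e|) \leq K^3 \tau^n (2 + |f|)$, which forces $f = 0$.

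The main obstacle is precisely the $\Ec$-inclusion: forward iteration is blocked because $A(x,n)|_{\Ec_i}$ has no controlled upper bound comparable to $c_k(A(x,n))$. The backward-lifting step circumvents this by converting the expansion on $\Ec_i$ into an upper bound on norms of preimages at $T^{-n} x$ and playing this against the exponential contraction on $\Fc_2$ through the gap hypothesis; it is at exactly this step that the invertibility of $T$ is used essentially.
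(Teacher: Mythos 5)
Your proof is correct, and while the first inclusion $\Fc_1(x)\subseteq\Fc_2(x)$ is carried out exactly as in the paper, your treatment of the harder inclusion $\Ec_1(x)\subseteq\Ec_2(x)$ takes a genuinely different (and leaner) route. The paper also iterates backwards, but it decomposes the preimage $v_n\in\Ec_1(T^{-n}x)$ of $v$ according to the splitting $\Ec_2(T^{-n}x)\oplus\Fc_2(T^{-n}x)$, which requires the uniform bound $M=\sup_{x\in X}|\pi_{\Fc_2(x)\ds\Ec_2(x)}|<\infty$ — obtained there from continuity of $\Ec_2,\Fc_2$ together with compactness of $X$ — and then exhibits $v$ as a limit of the images of the $\Ec_2$-components, invoking closedness of $\Ec_2(x)$. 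You instead decompose $v=e+f$ at the point $x$ itself, lift $v$ and $e$ backwards through the bijections $A(T^{-n}x,n)|_{\Ec_1(T^{-n}x)}$ and $A(T^{-n}x,n)|_{\Ec_2(T^{-n}x)}$, and observe that the difference of the lifts must lie in $\Fc_2(T^{-n}x)$ (its $\Ec_2$-component maps into $\Ec_2(x)\cap\Fc_2(x)=\{0\}$ and vanishes by injectivity); the contraction bound on $\Fc_2$ and the gap hypothesis then kill $f$ directly. This is purely pointwise: it uses neither the continuity of the splittings, nor compactness of $X$ (which, notably, is not even among the lemma's stated hypotheses, though the paper's proof uses it), nor any limiting/closedness argument — so your version is slightly more general and arguably cleaner, at the modest price of using the surjectivity part of equivariance for $\Ec_2$ as well as for $\Ec_1$, which is equally available.
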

\begin{proof}[Proof of Lemma \ref{lem:uniquesplit}]
By symmetry it is sufficient to show that $\Ec_1(x)\subseteq \Ec_2(x)$ and $\Fc_1(x)\subseteq \Fc_2(x)$ for all $x \in X$. We begin by proving the second of these two inclusions. Let $x \in X$ and $v \in \Fc_1(x)$, and write $v=v_1+v_2$ where $v_1 \in \Ec_2(x)$ and $v_2 \in \Fc_2(x)$. For each $n \geq 1$ we have
\begin{align*}K^{-1}c_k(A(x,n))|v_1|&\leq m(A(x,n)|_{\Ec_2(x)})|v_1|\leq |A(x,n)v_1|\\
&=|A(x,n)(v-v_2)|\leq |A(x,n)v|+|A(x,n)v_2|\\
&\leq Kc_{k+1}(A(x,n)) (|v|+|v_2|)\end{align*}
so that dividing both sides by $c_k(A(x,n))$ and letting $n \to \infty$ we obtain $|v_1|=0$; it follows that $v=v_2 \in \Fc_m(x)$ and therefore $\Fc_1(x)\subseteq \Fc_2(x)$ as claimed. 

We next observe that the supremum $M:=\sup_{x \in X}|\pi_{\Fc_2(x)\ds\Ec_2(x)}|$ is finite. Indeed, by the closed graph theorem we have $|\pi_{\Fc_2(x)\ds\Ec_2(x)}|<\infty$ for each $x \in X$, and it is an easy exercise using Lemma \ref{lem:openCond} to show that $|\pi_{\Fc_2(y)\ds\Ec_2(y)}|\leq 2|\pi_{\Fc_2(x)\ds\Ec_2(x)}|$ when $d_H(\Ec_2(x),\Ec_2(y))$ and $d_H(\Fc_2(x),\Fc_2(y))$ are sufficiently small. By the continuity of $\Ec_2$, $\Fc_2$ and compactness of $X$ the finiteness of the quantity $M$ follows.

We may now show that $\Ec_1(x)\subseteq \Ec_2(x)$ for all $x \in X$ as claimed. Let $x \in X$ and $v \in \Ec_1(x)$. For each $n \geq 1$ we may write $v=A(x,n)v_n$ where $v_n \in \Ec_1(T^{-n}x)$, and defining $v_n=w^+_n+w^-_n$ where $w^+_n \in \Ec_2(T^{-n}x)$, $w^-_n\in\Fc_2(T^{-n}x)$ we have $|w^-_n|\leq M|v_n|$. We have $|v|\geq m(A(T^{-n}x,n)|_{\Ec_1(T^{-n}x)})|v_n|\geq K^{-1}c_k(A(x,n))|v_n|$ and therefore
\[
|A(T^{-n}x,n)w^-_n| \leq Kc_{k+1}(A(T^{-n}x,n))|w^-_n|\leq MKc_{k+1}(A(T^{-n}x,n))|v_n| \leq MK^2\tau^n|v|.
\]
It follows that $\lim_{n \to \infty}|v-A(T^{-n}x,n)w^+_n|=\lim_{n \to\infty}|A(T^{-n}x,n)w^-_n|=0$ and therefore
\[v=\lim_{n \to \infty}A(T^{-n}x,n)w^+_n \in \Ec_2(x),\]
 completing the proof of the claim.
\end{proof}

\begin{proof}[Proof of Theorem \ref{thm:continuousTime}] Below, for each $m \geq 1$ we define a cocycle $A_m$ over $T_m:=\phi^{1/m}$ by $A_m(x,n):=B(x,n/m)$. 

\medskip

\noindent {\bf Proof that (a) $\Rightarrow$ (b).} Define $\tau_m := e^{- \gamma / m}$ for every $m \geq 1$. Clearly
\[c_{k+1}(A_m(x,n))=c_k(B(x,n/m)) \leq C\tau_m^n c_{k+1}\big( B(x, \frac{n}{m} + 1)\big),\]
\[c_{k+1}(A_m(T_mx,n))=c_k(B(\phi^{1/m}x,n/m)) \leq C\tau_m^n c_{k+1}\big( B(x, \frac{n}{m} + 1)\big)\]
for all $x \in X$ and $n \geq 0$ in view of \eqref{eq:continuousTime}; on the other hand Lemma \ref{lem:sot} yields
\[
c_k\big( B(x, \frac{n}{m} + 1)\big) \leq \big| B(x, 1-\frac{1}{m}) \big| c_k\big(B(x, \frac{n + 1}{m})\big) \leq C c_k\big(A_m(x, n + 1)\big).
\]
It follows that
\[
\max \{c_{k + 1}(A_m(x, n)), c_{k + 1}(A_m(T_m x, n))\} \leq C \tau_m^n \, c_k(A_m(x,n + 1)) \quad \text{ for all } n \geq 0 \, ,
\]
for every $x \in X$ and $n \geq 1$, so Proposition \ref{prop:hardDirection} applies to the cocycle $A_m$, yielding dominated splittings $V = \Ec_m(x) \oplus \Fc_m(x)$ for each $m \geq 1$. 

We now claim that for every $m \geq 1$ we have $\Ec_m = \Ec_1, \Fc_m = \Fc_1$. Since both $A_1$ and $A_m$ satisfy the hypotheses of Proposition \ref{prop:hardDirection}, we may choose a constant $\tilde K$ such that
\[m(A_1(x,n)|_{\Ec_1(x)}) \geq \tilde K c_k(A_1(x,n)), \quad |A_1(x,n)|_{\Fc_1(x)})| \leq \tilde K c_{k+1}(A_1(x,n)),\]
\[m(A_m(x,mn)|_{\Ec_m(x)}) \geq \tilde K c_k(A_m(x,mn)), \quad |A_m(x,mn)|_{\Fc_m(x)})| \leq \tilde K c_{k+1}(A_m(x,mn))\]
  for every $x \in X$ and $n \geq 1$; but $A_m(x,nm) \equiv A_1(x,n)$ and $T_m^{nm}\equiv T_1^n$, so the hypotheses of Lemma \ref{lem:uniquesplit} are satisfied by the cocycle $A_1$, transformation $T_1$ and splittings $V=\Ec_1(x)\oplus \Fc_1(x)=\Ec_m(x)\oplus \Fc_m(x)$. The claim follows by application of the lemma.
      
Let us now define $\Ec := \Ec_m, \Fc := \Fc_m$. Since $\Ec$ and $\Fc$ are equivariant for every $A_m$ it follows that
\[
B(x,r) \Ec(x) = \Ec(\phi^{r} x)  \quad \text{ and } \quad B(x,r) \Fc(x) \subset \Fc(\phi^{r} x) \quad \text{ for all } r \geq 0, r \in \Q \, .
\]
simply by writing $r=n/m$, $\phi^r=T_m^n$ and $B(x,r)=A_m(x,n)$. We now extend this equivariance result to arbitrary $t \geq 0$. Write $t\geq 0$ as the limit of a sequence of nonnegative rationals $r_n$, and suppose that $u \in \Ec(x)$. For each $n \geq 1$ we have $B(x,r_n)u \in \Ec(\phi^{r_n}x)$, so by the definition of the metric $d_H$ we may choose a sequence of vectors $v_n$ such that $v_n \in \Ec(\phi^tx)$ and $|B(x,r_n)u-v_n|\leq 2|B(x,r_n)u|d_H(\Ec(\phi^{r_n}x),\Ec(\phi^tx))$ for every $n \geq 1$. In particular it follows that $\lim_{n \to \infty} |B(x,r_n)u-v_n| =0$ and therefore
\[B(x,t)u=\lim_{n \to \infty}B(x,r_n)u =\lim_{n \to \infty} v_n \in \Ec(\phi^tx)\]
since $\Ec(\phi^tx)$ is closed. We deduce that $B(x,t)\Ec(x)\subseteq \Ec(\phi^tx)$ as desired; the proof for $\Fc$ is identical.

It remains to check \eqref{eq:continuousDS}. For $t \in [0,1)$, \eqref{eq:continuousDS} is an elementary consequence of Lemma \ref{lem:sot} and Lemma \ref{lem:smalltControl}.  For $t \geq 1$, write $t = n + s$ for $n \in \N, s \in [0,1)$. From \eqref{eq:realizeCK} applied to the cocycle $A_1(x,n) = B(x,n)$, we have that
\begin{align}\label{eq:integerTimeControl}
m(B(x,n)|_{\Ec(x)}) \geq \tilde K^{-1} c_k(B(x,n)) \, , \quad \text{ and } \quad |B(x,n)|_{\Fc(x)}| \leq \tilde K c_{k + 1} (B(x,n)) \, .
\end{align}
Then,
\[
m \big( B(x,t)|_{\Ec(x)} \big) \geq m \big( B(x,n)|_{\Ec(x)} \big) \cdot m \big( B(\phi^n x, s)|_{\Ec(\phi^n x)} \big) \geq c \tilde K^{-1}  c_k(B(x,n))
\]
and 
\[
|B(x,t)|_{\Fc(x)}| \leq |B(x,n)|_{\Fc(x)}| \cdot |B(\phi^n x, s)| \leq C \tilde K c_{k + 1}(B(x,n)) \, ,
\]
which, in light of \eqref{eq:continuousTime}, clearly implies \eqref{eq:continuousDS}.

\medskip

\noindent {\bf Proof that (b) $ \Rightarrow$ (a).} To begin, observe that
\[
m \big( B(x,t) |_{\Ec(x)} \big) \leq C_k c_k(B(x, t)) \qquad \text{ and } \qquad |B(x,t)|_{\Fc(x)}| \geq c_{k + 1}(B(x,t)) \, .
\]
The second estimate is a direct consequence of the definition of the Gelfand numbers $c_k$. The first estimate follows from \eqref{eq:alternativeSnumber} and the definition of $\knum_k(\cdot)$ (see \S \ref{sec:geometry} for details).

Fix $\e \in [0,1]$; the estimate \eqref{eq:continuousDS} implies
\[
c_{k + 1}(B(\phi^\e x,t)) \leq C e^{- \gamma t} m \big( B( \phi^\e x,t)|_{\Ec(\phi^\e x)} \big) \, .
\]
On the other hand, note that $B(x,t + 1) = B(\phi^{t + \e} x, 1 - \e) \circ B(\phi^\e x, t) \circ B(x, \e)$, and so by the equivariance of $\Ec$, 
\[
m \big( B(x, t + 1)|_{\Ec(x)} \big) \geq m \big( B(\phi^{t + \e} x, 1 - \e) |_{\Ec(\phi^{t + \e})} \big) \cdot m \big( B(\phi^\e x, t)|_{\Ec(\phi^\e x) } \big) \cdot m \big( B(x, \e) |_{\Ec(x)} \big) \, .
\]
Using \eqref{eq:bounded01below}, we conclude that
\[
c_{k + 1}(B(\phi^\e x,t)) \leq C c^{-2} e^{- \gamma t} m \big( B(x, t + 1)|_{\Ec(x)} \big) \leq C C_k c^{-2} e^{- \gamma t} c_k(B(x, t + 1)) \, .
\]
As $\e \in [0,1]$ was arbitrary, this completes the proof.
\end{proof}

%%%%%%%%%%%%%%%%%%%%%%%%%%%%%%%%%
%%%%%%%%%%%%%%%%%%%%%%%%%%%%%%%%%
%  END OF SECTION
%%%%%%%%%%%%%%%%%%%%%%%%%%%%%%%%%
%%%%%%%%%%%%%%%%%%%%%%%%%%%%%%%%%

\section{Quantitative bounds for finite-dimensional cocycles}\label{sec:finiteDimension}

In this section we prove Theorem \ref{thm:finiteDimensional}, a version of Theorem \ref{thm:bogo} which controls the angle of the dominated splitting $\Ec \oplus \Fc$ and vector growth/contraction on $\Ec, \Fc$ respectively. In \S \ref{subsec:hilbertGeometry} we collect some tools in the inner-product space setting, and in \S \ref{subsec:proofFiniteDimension} we show how the methods of \S \ref{sec:hardbit} can be used to prove Theorem \ref{thm:finiteDimensional}.

\subsection{Preliminaries of inner product space geometry}\label{subsec:hilbertGeometry}

We write $(\cdot, \cdot) $ for the Euclidean inner product on $\R^d$ and $\|\cdot\|$ for its norm. For a subspace $E \subset \R^d$, we write $\Pr_E$ for the orthogonal projection onto $E$, and $E^{\perp}$ for the orthogonal complement.  Orthogonal projections permit us to define the Hausdorff distance $d_H$ in this setting by
\[
d_H(E, E') = \|\Pr_E - \Pr_{E'}\| \, ;
\]
this definition of the Hausdorff distance has the nice property that if $E, E'$ have the same dimension, then
\[
d_H(E, E') = \|\Pr_{E^{\perp}}|_{E'}\| = \Gap(E, E') = \|\Pr_{E'^{\perp}}|_E\| = \Gap (E', E) \, ;
\]
see \cite{Ka95} for proof. Here, $\Gap(\cdot, \cdot)$ is as defined in \S \ref{sec:geometry}.

As in previous sections, $\det$ will always refer to the absolute value of the determinant.

\medskip

We require several inner product space analogues of results used in the proof of Theorem \ref{thm:main}. The first is an explicit estimate of the Lipschitz constant of the mapping $B \mapsto \log \det(B)$ as $B$ ranges over the set of invertible $k \times k$ matrices.

\begin{lem}\label{lem:diffMatrices}
Let $B_1, B_2$ be invertible $k \times k$ matrices for which $\|B_1 - B_2\| < \min\{\|B_i^{-1}\|^{-1} : i = 1,2\}=: m$. Then,
\[
\bigg|\log \frac{\det(B_2)}{\det (B_1)}\bigg| \leq k \, \frac{\|B_1 - B_2\|}{m - \|B_1 - B_2\|} \, .
\]
\end{lem}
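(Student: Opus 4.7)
The plan is to reduce the claim to bounding $\bigl|\log|\det(I+C)|\bigr|$ for a matrix $C$ with $\|C\|<1$, applied to $C := B_1^{-1}(B_2-B_1)$. Since $m \leq \|B_1^{-1}\|^{-1}$ by definition, I have $\|B_1^{-1}\| \leq 1/m$, and combined with the hypothesis $\|B_1-B_2\|<m$ this gives $\|C\| \leq \|B_1^{-1}\|\,\|B_2-B_1\| \leq \|B_1-B_2\|/m < 1$. The factorization $B_2 = B_1(I+C)$ and multiplicativity of the determinant then yield
\[
\left|\log\frac{\det(B_2)}{\det(B_1)}\right| = \bigl|\log|\det(I+C)|\bigr|.
\]

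Next I will estimate the right-hand side via the eigenvalues of $C$. Letting $\lambda_1,\ldots,\lambda_k\in \C$ denote these eigenvalues with multiplicity, each satisfies $|\lambda_i| \leq \|C\| < 1$, and so
\[
(1-\|C\|)^k \leq \prod_{i=1}^k |1+\lambda_i| = |\det(I+C)| \leq (1+\|C\|)^k.
\]
The elementary inequality $-\log(1-x) \geq \log(1+x)$ on $[0,1)$ (equivalent to $(1-x)(1+x)\leq 1$) then gives $\bigl|\log|\det(I+C)|\bigr| \leq -k\log(1-\|C\|)$, and a term-by-term power-series comparison yields $-\log(1-x) \leq x/(1-x)$ on $[0,1)$. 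Combining these with $\|C\| \leq \|B_1-B_2\|/m$ and the monotonicity of $x \mapsto x/(1-x)$ produces
\[
\bigl|\log|\det(I+C)|\bigr| \leq \frac{k\|C\|}{1-\|C\|} \leq \frac{k\|B_1-B_2\|}{m-\|B_1-B_2\|},
\]
as required.

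I do not anticipate a serious obstacle; this is essentially a routine matrix computation. An alternative and arguably cleaner route would be to integrate $\frac{d}{dt}\log|\det B(t)| = \operatorname{tr}\bigl(B(t)^{-1}(B_2-B_1)\bigr)$ along the segment $B(t)=(1-t)B_1+tB_2$, using a Neumann-series bound $\|B(t)^{-1}\| \leq (m-t\|B_1-B_2\|)^{-1}$ together with $|\operatorname{tr}(\cdot)| \leq k\|\cdot\|$; integrating on $[0,1]$ delivers the same estimate (in fact the tighter $-k\log(1-\|B_1-B_2\|/m)$).
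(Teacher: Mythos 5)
Your main argument is correct, and it takes a genuinely different route from the paper's. You factor $B_2 = B_1(I+C)$ with $C = B_1^{-1}(B_2-B_1)$ and control $|\det(I+C)|$ through the eigenvalues of $C$, using $|\lambda_i|\le \rho(C)\le \|C\| < 1$ together with the elementary inequalities $\log(1+x)\le -\log(1-x)\le x/(1-x)$ on $[0,1)$; each step checks out (in particular $I+C = B_1^{-1}B_2$ is invertible, $\|C\|\le \|B_1-B_2\|/m<1$ follows from $m\le \|B_1^{-1}\|^{-1}$, and monotonicity of $x\mapsto x/(1-x)$ delivers exactly the stated constant, with the paper's convention that $\det$ denotes the absolute value). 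The paper instead differentiates along the segment $B_t = tB_1+(1-t)B_2$, uses $\frac{d}{dt}\log\det B_t = \operatorname{Tr}\big(B_t^{-1}(B_2-B_1)\big)$, bounds $\|B_t^{-1}(B_1-B_2)\|$ via a Neumann series anchored at $B_2$, and applies $|\operatorname{Tr}(M)|\le k\|M\|$, finally symmetrizing in $B_1,B_2$ to get the minimum $m$ --- this is essentially the alternative route you sketch at the end, so you have in effect both proofs. Your spectral argument is more elementary (no calculus, no trace inequality, no Neumann series) and, as you observe, gives the slightly sharper bound $-k\log(1-\|B_1-B_2\|/m)$; the path-integration method is the one that transfers to situations where one perturbs the subspace as well as the operator and eigenvalue arguments are less natural, which is the spirit of the determinant-regularity estimates used elsewhere in the paper (Lemma \ref{lem:lipRegFD} and the Banach-space Lemma \ref{lem:detReg}). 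Either way, the inequality of Lemma \ref{lem:diffMatrices} follows.
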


\begin{proof}
Let $B_t := t B_1 + (1 - t) B_2$, and note that $B_t$ is invertible for all $t \in [0,1]$ by the bound $\|B_1 - B_2\| < \|(B_2)^{-1}\|^{-1}$. A standard computation (using, e.g., the expansion of the determinant into minors and Cramer's rule) implies
\[
\frac{d}{dt} \log \det B_t = \operatorname{Tr}\big( B_t^{-1} (B_2 - B_1) \big) \, .
\]
Applying the Fundamental Theorem of Calculus,
\[
\log \det(B_2) - \log \det (B_1) = \int_0^1 \operatorname{Tr} \big( B_t^{-1} (B_1 - B_2) \big) d t \, ,
\]
To estimate the integrand, note that $B_t^{-1} = (\Id - t B_2^{-1} (B_2 - B_1))^{-1} \circ B_2^{-1}$, so that
\begin{align*}
\| B_t^{-1}(B_1 - B_2)\| &= \big\| \big( (\Id - t B_2^{-1} (B_2 - B_1))\big)^{-1} B_2^{-1} (B_1 - B_2)  \big \| \, , \\
& \leq \sum_{n =0}^\infty t^n \big( \|B_2^{-1}(B_2 - B_1) \| \big)^{n+1}  \\
& \leq \frac{\|B_2^{-1}(B_2 - B_1)\|}{1 - \|B_2^{-1} (B_2 - B_1)\|} \, ,
\end{align*}
having used the von Neumann series to estimate $\big\|\big(\Id - t (B_2^{-1} (B_2 - B_1) \big)^{-1} \big\|$. Using the standard estimate $|\operatorname{Tr}(B)| \leq k \|B\|$ and clearing the denominator, we obtain
\[
 \log \frac{\det(B_2)}{\det (B_1)} \leq k \, \frac{\|B_1 - B_2\|}{\|B_2^{-1}\|^{-1} - \|B_1 - B_2\|} \, .
\]
The desired estimate follows on exchanging the roles of $B_1, B_2$ in the above proof.
\end{proof}

We now give an estimate on the Lipschitz constant of the mapping $E \mapsto \log \det(A|_E)$ defined on the $k$-dimensional Grassmanian $\Gc_k(\R^d)$, where $A \in L(\R^d)$ is assumed invertible. Below, $\kappa(A) := \|A\| \cdot \|A^{-1}\|\geq 1$ is the condition number of $A$.

\begin{lem}\label{lem:lipRegFD}
Let $A \in L(\R^d)$ be invertible and $E_1, E_2  \in \Gc_k(\R^d)$, $k \leq d$. If $d_H(E_1, E_2) \leq \big( 2 \kappa(A) \big)^{-2}$, then
\[
\bigg| \log \frac{\det(A|E_1)}{\det(A | E_2)} \bigg| \leq 36 k \kappa(A)^2 d_H(E_1, E_2) \, .
\]
\end{lem}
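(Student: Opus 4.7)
The plan is to reduce the statement to a finite-dimensional matrix computation and then invoke Lemma \ref{lem:diffMatrices}. In the Euclidean setting, if $U \in \R^{d \times k}$ has orthonormal columns spanning a $k$-dimensional subspace $E \subset \R^d$, then the singular values of $A|_E : E \to AE$ coincide with those of the $d \times k$ matrix $AU$, and so Definition \ref{defn:det} gives $\det(A|E)^2 = \det((AU)^T (AU)) = \det(U^T A^T A U)$. Choosing orthonormal bases $U_i$ of $E_i$ and setting $B_i := U_i^T A^T A U_i$, the problem reduces to bounding $\tfrac{1}{2} \bigl| \log \det(B_1)/\det(B_2) \bigr|$ via Lemma \ref{lem:diffMatrices}.

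Applying that lemma requires a lower bound on $m := \min_i \|B_i^{-1}\|^{-1}$ and an upper bound on $\|B_1 - B_2\|$. For the first, since $v^T B_i v = \|A U_i v\|^2 \geq \|A^{-1}\|^{-2} \|v\|^2$ for every $v \in \R^k$, we have $m \geq \|A^{-1}\|^{-2}$. For the second, the key step is to choose the orthonormal bases $U_1, U_2$ compatibly so that $\|U_1 - U_2\|$ is proportional to $d_H(E_1, E_2)$ with a small multiplicative constant. I would accomplish this via the principal-angle decomposition: there exist orthonormal bases $\{v_i\}, \{v_i'\}$ of $E_1, E_2$ with $(v_i, v_j') = \cos\theta_i \cdot \delta_{ij}$, where $0 \leq \theta_1 \leq \cdots \leq \theta_k$ are the principal angles between $E_1$ and $E_2$ and $\sin\theta_k = d_H(E_1, E_2)$. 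The columns $v_i - v_i'$ of $U_1 - U_2$ are then mutually orthogonal with squared norms $2(1 - \cos\theta_i)$, and one reads off
\[
\|U_1 - U_2\| = 2\sin(\theta_k/2) \leq \sqrt{2}\, d_H(E_1, E_2),
\]
using $\cos(\theta_k/2) \geq \sqrt{2}/2$ (valid since $\theta_k \leq \pi/2$).

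Expanding $B_1 - B_2 = (U_1 - U_2)^T A^T A U_1 + U_2^T A^T A (U_1 - U_2)$ then yields $\|B_1 - B_2\| \leq 2\sqrt{2}\, \|A\|^2\, d_H(E_1, E_2)$, so $\|B_1 - B_2\|/m \leq 2\sqrt{2}\, \kappa(A)^2\, d_H(E_1, E_2)$, which the hypothesis $d_H(E_1, E_2) \leq (2\kappa(A))^{-2}$ bounds by $\sqrt{2}/2 < 1$. Lemma \ref{lem:diffMatrices} now applies and produces a bound linear in $d_H(E_1, E_2)$; a routine numerical check shows that, after accounting for the factor $\tfrac{1}{2}$ from the square root, the resulting constant sits comfortably under $36 k \kappa(A)^2$ (one in fact obtains a constant closer to $5$).

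The principal obstacle is step (ii) above: the hypothesis $d_H(E_1, E_2) \leq (2\kappa(A))^{-2}$ is tight enough that a careless orthonormal-basis choice, for instance Gram--Schmidt applied to an arbitrary near-aligned spanning set, can inflate $\|U_1 - U_2\|$ to the point where $\|B_1 - B_2\|/m$ exceeds $1$, placing the computation outside the regime of Lemma \ref{lem:diffMatrices}. The principal-angle alignment is what provides exactly enough slack; every remaining step is essentially bookkeeping.
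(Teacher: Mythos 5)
Your proof is correct, but it takes a genuinely different route from the paper's. The paper applies Lemma \ref{lem:diffMatrices} directly to the pair of operators $A|_{E_1}$ and $\tilde A := \Pr_{A E_1} \circ A \circ \Pr_{E_2}|_{E_1}$ (as maps $E_1 \to A E_1$), and then uses the factorization $\det(A|E_1)/\det(A|E_2) = \frac{\det(A|E_1)}{\det(\tilde A|E_1)}\cdot \det(\Pr_{AE_1}|AE_2)\cdot\det(\Pr_{E_2}|E_1)$, bounding the two projection determinants separately via $\|(\Pr_E|_{E'})^{-1}\|^{-1}\geq 1-d_H(E,E')$; the constant $36$ comes from summing these three contributions. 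You instead pass to the symmetric Gram matrices $B_i = U_i^T A^T A U_i$ (so a single application of Lemma \ref{lem:diffMatrices} suffices), with the key point being the principal-angle-aligned choice of orthonormal bases giving $\|U_1-U_2\| = 2\sin(\theta_k/2)\leq \sqrt2\, d_H(E_1,E_2)$; your computations (orthogonality of the columns $v_i - v_i'$, the bound $m \geq \|A^{-1}\|^{-2}$, the expansion of $B_1-B_2$, and the final constant $2+2\sqrt2 \approx 4.83$ after halving for the square root) all check out, and since $36$ is only an upper bound the stated lemma follows. What your approach buys is a cleaner argument and a sharper constant (roughly $5k\kappa^2$ versus $36k\kappa^2$); what it costs is reliance on the classical fact that for equal-dimensional subspaces $d_H(E_1,E_2)=\|\Pr_{E_1}-\Pr_{E_2}\|$ equals the sine of the largest principal angle (essentially the CS decomposition), which the paper does not state and which your write-up invokes without proof, whereas the paper's argument uses only the projection identities it records at the start of \S\ref{subsec:hilbertGeometry}. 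Since the sharper constant would propagate to a slightly better bound on $R_\Ec$ in Theorem \ref{thm:finiteDimensional}, your version is arguably preferable if one cares about the explicit estimates.
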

\begin{proof}
We apply Lemma \ref{lem:diffMatrices} to $B_1 = A|_{E_1}$, $B_2 = \tilde A := \Pr_{A E_1} \circ A \circ \Pr_{E_2}|_{E_1}$. We first bound $\|(\tilde A)^{-1}\|^{-1}$: it is easy to check from the identities at the beginning of \S \ref{subsec:hilbertGeometry} that 
\[
\| (\Pr_{E}|_E')^{-1}\|^{-1} \geq 1 - d_H(E, E')
\]
for subspaces $E, E' \subset \R^d$ of the same dimension. Applying this to $E = E_2, E' = E_1$ and $E = A E_1, E' = A E_2$ yields
\begin{gather*}
\| (\Pr_{E_2}|_{E_1})^{-1}\|^{-1} \geq 1 - d_H(E_1, E_2) \, , \\
\| (\Pr_{A E_1}|_{A E_2})^{-1} \|^{-1} \geq 1- d_H(A E_1, A E_2) \, ,
\end{gather*}
respectively. Note that
\[
d_H(A E_1, A E_2) \leq \kappa(A) d_H(E_1, E_2) \, ,
\]
and so we conclude
\[
\| (\tilde A)^{-1} \|^{-1} \geq m := (1 - \kappa(A) d_H(E_1, E_2))^2 \|A^{-1} \|^{-1} \, .
\]
We now bound $\|B_1 - B_2\| = \|(A - \tilde A)|_{E_1}\|$:

\begin{align*}
\| (A - \tilde A)|_{E_1} \| &\leq \| A|_{E_1} - A \circ \Pr_{E_2}|_{E_1} \| + \| A \circ \Pr_{E_2}|_{E_1} - \Pr_{A E_1} \circ A \circ \Pr_{E_2}|_{E_1} \|  \\
& \leq \|A\| \cdot \| \Pr_{E_2^\perp}|_{E_1}\| + \|\Pr_{(A E_1)^\perp}|_{A E_2}\| \cdot \|A \circ P_{E_2}\| \\
& \leq \|A\| \big( d_H(E_1, E_2) + d_H(A E_1, A E_2) \big) \leq \|A\| (1 + \kappa(A)) d_H(E_1, E_2) \, .
\end{align*}

To complete the estimate, we decompose
\[
\frac{\det(A | E_1)}{\det ( A | E_2)}  = \frac{\det(A |  E_1) }{\det(\tilde A | E_1)} \cdot \det(\Pr_{A E_1} | A E_2) \cdot \det( \Pr_{E_2} | E_1) \, .
\]
Simplifying the estimate from Lemma \ref{lem:diffMatrices} and using $1 + \kappa(A) \leq 2 \kappa(A)$,
\[
\bigg| \log \frac{\det(A |  E_1) }{\det(\tilde A | E_1)} \bigg| \leq k \frac{\|(A - \tilde A)|_{E_1}\|}{m - \|(A - \tilde A)|_{E_1}\|} \leq k \frac{2 \kappa(A)^2 d_H(E_1, E_2)}{(1 - \kappa(A) d_H(E_1, E_2))^2 - 2 \kappa(A)^2 d_H(E_1, E_2)}\, .
\]
Enforcing $d_H(E_1, E_2) \leq (2 \kappa(A))^{-2}$ now yields an upper bound of $\leq 32 k \kappa(A)^2 d_H(E_1, E_2)$ for this term.

For the remaining terms, we bound
\begin{gather*}
\big| \log \det(\Pr_{E_2} | E_1) \big| \leq \max\{0, k \log \| (\Pr_{E_2}|_{E_1})^{-1}\| \} \leq - k \log (1 - d_H(E_1, E_2)) \, , \\
\big| \log \det(\Pr_{A E_1}| AE_2) \big| \leq \max\{0, k \log \| (\Pr_{AE_1}|_{AE_2})^{-1}\| \} \leq - k \log (1 - \kappa(A) d_H(E_1, E_2)) \, , 
\end{gather*}
and so using the bound $|\log (1 + z)| \leq 2 |z|$ for $z \in [- \frac12, \frac12]$, we get
\[
\big| \log \big( \det(\Pr_{E_2}| E_1) \det(\Pr_{A E_1}| A E_2) \big)  \big| \leq 4 k  \kappa(A) d_H(E_1, E_2) \, .
\]
Combining these yields the advertised estimate.
\end{proof}

\medskip

We complete \S \ref{subsec:hilbertGeometry} with a version of Proposition \ref{prop:genSVD} on $\R^d$. As one might expect, the proof is much simpler (as we have the singular value decomposition to lean on in this setting) and the resulting estimates are far more aesthetically pleasing. 

\begin{prop}\label{prop:genSVDHilbert}
Let $A \in L(\R^d)$ be of rank $\geq k$ and $E \subset \R^d$ a subspace of dimension $k$ for which $\det(A|E) = r V_k(A)$, $r \in (0,1]$.

Define $F = \{v \in \R^d : A v \in (A E)^{\perp} \}$. Then, $F$ is a complement to $E$ in $\R^d$; the splitting $\R^d = E \oplus F$ satisfies
\begin{gather*}
m(A|_E) \geq r \sigma_k(A) \, , \quad \|A|_F\| \leq r^{-1} \sigma_{k + 1}(A) \, , \quad \text{ and }\quad \|\pi_{E \ds F}\| \leq r^{-1} \, .
\end{gather*}
\end{prop}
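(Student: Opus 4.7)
My plan is to organize everything around the identity $F^\perp = A^TAE$, which follows directly from the definition: $v \in F$ iff $(Av, Ae) = 0$ for all $e \in E$, iff $v \perp A^TAE$. Since $\det(A|E) > 0$ forces $A|_E$ to be injective, the map $A^TA|_E$ is also injective (because $A^TAv = 0$ implies $\|Av\|^2 = 0$), so $\dim A^TAE = k$ and hence $\dim F = d-k$. Any $e \in E \cap F$ satisfies $e \perp A^TAe$, so $\|Ae\|^2 = (e, A^TAe) = 0$ forces $e = 0$. Thus $F$ complements $E$.

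For the two magnitude estimates I would use the Hilbert-space identity $V_j(A) = \prod_{i=1}^j \sigma_i(A)$. The singular values $s_1 \geq \cdots \geq s_k$ of $A|_E$ satisfy $s_i \leq \sigma_i(A)$, so $rV_k(A) = \prod_i s_i \leq s_k V_k(A)/\sigma_k(A)$, yielding $m(A|_E) = s_k \geq r\sigma_k(A)$. For the upper bound on $\|A|_F\|$, fix a unit $v \in F$ and an orthonormal basis $\{e_1, \ldots, e_k\}$ of $E$. Computing the Gram determinants of $\{e_1, \ldots, e_k, v\}$ and $\{Ae_1, \ldots, Ae_k, Av\}$ -- the latter being block diagonal since $Av \perp AE$ -- gives
\[
\det(A \mid E \oplus \langle v \rangle) = \frac{\det(A|E) \cdot \|Av\|}{\|\Pr_{E^\perp} v\|}.
\]
Combining this with the bound $\det(A \mid E \oplus \langle v\rangle) \leq V_{k+1}(A) = V_k(A)\sigma_{k+1}(A)$ and with $\|\Pr_{E^\perp} v\| \leq 1$ yields $\|Av\| \leq \sigma_{k+1}(A)/r$.

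The main obstacle is the projection bound $\|\pi_{E \ds F}\| \leq r^{-1}$. Using the formula $\|\pi_{E\ds F}\| = \sin\theta(E, F)^{-1}$, this is equivalent to $d(e, F) \geq r$ for each unit $e \in E$; and since $F^\perp = A^TAE$, one has $d(e, F) = \|\Pr_{A^TAE}\, e\|$, so it suffices to show $m(\Pr_{A^TAE}|_E) \geq r$. I would handle this via a determinant calculation. Using the SVD of $A|_E$, pick orthonormal bases $\{e_i\}$ of $E$ and $\{f_i\}$ of $AE$ with $Ae_i = s_i f_i$. A direct Gram computation (using the characterization $\Pr_{A^TAE} e_j \in A^TAE$, $e_j - \Pr_{A^TAE} e_j \perp A^TAE$, and expanding in the basis $\{A^T f_l\}$) shows that $[(e_i, \Pr_{A^TAE} e_j)] = SN^{-1}S$, where $S = \mathrm{diag}(s_1, \ldots, s_k)$ and $N$ is the Gram matrix of $\{A^T f_1, \ldots, A^T f_k\}$. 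Recognizing $\det S = \det(A|E)$ and $\det N = \det(A^T|_{AE})^2$, this yields the key identity
\[
\det(\Pr_{A^TAE}|_E) = \frac{\det(A|E)}{\det(A^T|_{AE})},
\]
and bounding the denominator by $V_k(A^T) = V_k(A)$ (using $\sigma_i(A) = \sigma_i(A^T)$) gives $\det(\Pr_{A^TAE}|_E) \geq r$. To pass from $\det$ to $m$: since $\Pr_{A^TAE}|_E$ is a restriction of an orthogonal projection, its singular values $\sigma_1 \geq \cdots \geq \sigma_k$ all lie in $[0,1]$, so $\det = \sigma_k \prod_{i<k}\sigma_i \leq \sigma_k = m$, giving $m \geq \det \geq r$ as required.
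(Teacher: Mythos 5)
Your proposal is correct, and for the key projection bound it takes a genuinely different route from the paper. The paper works with the singular vectors $v_1,\dots,v_k$ of $A|_E$, writes $\pi_{F \ds E}$ as a composition of projections that peel off one direction at a time, bounds each factor by $r_i^{-1}$ where $r_i\sigma_i(A)=\sigma_i(A|_E)$, and multiplies these using $\prod_i r_i = r$ together with the identity $\|\pi_{W_1\ds W_2}\|=\|\pi_{W_2\ds W_1}\|$; this is the finite-dimensional shadow of the iterative scheme behind Proposition \ref{prop:genSVD}. You instead exploit the identity $F^\perp=A^TAE$ and prove $\sin\theta(E,F)=m(\Pr_{A^TAE}|_E)\geq\det(\Pr_{A^TAE}|_E)=\det(A|E)/\det(A^T|_{AE})\geq r$ in one shot, via the Gram-matrix identity $[(e_i,\Pr_{A^TAE}e_j)]=SN^{-1}S$ and the fact that an orthogonal projection has singular values in $[0,1]$; your verification of the identity is sound (note $(Pe_i,Pe_j)=(e_i,Pe_j)$ since $P$ is an orthogonal projection, so your matrix really is the Gram matrix of $\{Pe_j\}$). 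Your treatment of complementation (dimension count plus $e\perp A^TAe\Rightarrow Ae=0$) and of $\|A|_F\|$ (an exact Gram computation in place of the paper's appeal to Lemma \ref{lem:detSplit}, which costs nothing here since $Av\perp AE$ makes the relevant angle $\pi/2$) also diverge mildly but reach the same constants. What your approach buys is a shorter, non-iterative argument with exact identities, specific to the Euclidean/adjoint structure; what the paper's buys is an argument whose skeleton survives in the Banach-space setting of Proposition \ref{prop:genSVD}, where no adjoint or orthogonal projection is available.
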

\begin{proof}
That $F$ so-defined is a complement to $E$ follows from an argument given at the beginning of the proof of Lemma \ref{lem:oneStep}; see in particular \eqref{eq:projectionFormtoshowComplement}. Let $v_1, \cdots, v_k \in E$ be orthonormal vectors corresponding to the singular value decomposition of $A|_E$, written so that $|A v_i| = \sigma_i(A|_E)$ for $1 \leq i \leq k$. 

Define $r_1, \cdots, r_k \in (0,1]$ by $r_i \sigma_i(A) = \sigma_i(A|_E)$; that $\prod_{i = 1}^k r_i = r$ follows from the formulae $\det(A|E) = \prod_{i = 1}^k \sigma_i(A|_E)$ and $V_k(A) = \prod_{i = 1}^k \sigma_i(A)$. In particular, 
\[
m(A|_E) = \sigma_k(A|_E) = r_k \sigma_k(A) \geq r \sigma_k(A) \, .
\]

Next we estimate $\|\pi_{E \ds F}\|$. We note that
\begin{align}\label{eq:projectEstimateHilbert}
\pi_{F \ds E} = \pi_{F \ds \langle v_k \rangle } \circ \pi_{F \oplus \langle v_k \rangle \ds \langle v_{k - 1} \rangle } \circ \cdots \circ \pi_{F \oplus \langle v_2, \cdots, v_k \rangle \ds \langle v_1 \rangle } \, .
\end{align}
To estimate each term, we write $w_i = \|A v_i\|^{-1} A v_i$ and note that
\[
\pi_{\langle v_i \rangle \ds F \oplus \langle v_{i + 1}, \cdots, v_k \rangle } 
= (A|_{\langle v_i \rangle})^{-1} \circ \pi_{  \langle w_i \rangle \ds (A E)^{\perp} \oplus \langle w_{i + 1}, \cdots, w_k \rangle } \circ A|_{\langle v_i, \cdots, v_k \rangle \oplus F} 
\]
for $1 \leq i \leq k-1$; the $i = k$ case is handled similarly. So, for any $1 \leq i \leq k$,
\[
\|\pi_{F \oplus \langle v_{i + 1}, \cdots, v_k \rangle \ds \langle v_i \rangle}  \| = \|\pi_{\langle v_i \rangle \ds F \oplus \langle v_{i + 1}, \cdots, v_k \rangle }  \| \leq \frac{ \sigma_i(A)}{\sigma_i(A|_E)} = r_i^{-1} \, .
\]
Above, we have used that $\|\pi_{W_1 \ds W_2}\| = \|\pi_{W_2 \ds W_1}\|$ for any complements $W_1, W_2$ of $\R^d$; see Problem I.6.31 in \cite{Ka95}. Our desired estimate for $\|\pi_{E \ds F}\| = \|\pi_{F \ds E}\|$  now follows from \eqref{eq:projectEstimateHilbert} and the fact that $\prod_{i = 1}^k r_i = r$. 

The desired estimate for $\|A|_F\|$ now follows by applying Lemma \ref{lem:detSplit} to $\det(A| E \oplus \langle f \rangle)$, where $f \in F, \|f\| = 1$.
\end{proof}

%%%%%%%%%

\subsection{Proof of Theorem \ref{thm:finiteDimensional}}\label{subsec:proofFiniteDimension}

Throughout this subsection, $X$ is a general topological space, $T$ a homeomorphism of $X$, and $A : X \times \N \to L(\R^d)$ is a continuous cocycle for some $d \in \N, d > 1$, where $A_x^n$ is invertible for any $(x, n) \in X \times \N$. We take on the hypotheses of Theorem \ref{thm:finiteDimensional}, assuming that for some $1 \leq k < d$, we have
\[
\sigma_{k + 1}(A^n_x) \leq K \tau^n \sigma_k(A^n_x)
\]
for all $x \in X, n \in \N$, and that
\[
\kappa := \bigg( \sup_{x \in X} \|A_x\| \bigg) \cdot \bigg( \sup_{x \in X} \|A_x^{-1}\| \bigg) < \infty \, .
\]

With these assumptions, the arguments of \S\ref{sec:hardbit} go through nearly unchanged; indeed, these arguments are far simpler and rely only on the classical singular value decomposition for linear operators on $\R^d$. 

We will now sketch this argument, along the way recording an estimate for use in the proof of Theorem \ref{thm:finiteDimensional}. Let $n$ be sufficiently large so that $K \tau^n < 1$, hence $\sigma_{k + 1}(A_x^n) < \sigma_k(A^n_x)$. For such $n$, let $E_n(x) \subset \R^d$ denote the $k$-dimensional subspace corresponding to the top $k$ singular values of $A_x^n$; write $E_n'(x) = A_x^n E_n(x)$. Following the procedure outlined in the proof of Proposition \ref{prop:findUpperSubspace}, one obtains the estimate
\begin{align}\label{eq:EconvFD}
d_H( E_n'(T^{-n} x), E_{n+ q}'(T^{- (n + q)} x)) \leq \kappa K \frac{\tau^n}{1 - \tau} \, ,
\end{align}
for any $x \in X$, $q \geq 1$ and $n$ sufficiently large. Thus we construct the upper $k$-dimensional subspace
\[
\Ec(x) := \lim_{n \to \infty} E_n'(T^{-n} x) \, .
\] 
The continuity of $x \mapsto \Ec(x)$ may be deduced the same arguments as in Lemma \ref{lem:contUpperSubspace}; as an alternative, one may show directly that $E_n(x)$ depends continuously on $x \in X$, and therefore  $\Ec$ is a uniform limit of continuous functions, hence continuous.

\medskip

We now come to the analogue of Proposition \ref{prop:distEst}, where it is proven that the upper subspace $\Ec$ realizes a uniformly controlled positive fraction of the maximal $k$-dimensional volume growth of $A_x^n$. This is crucial to obtaining the advertised concrete estimates for the dominated splitting, and so we formulate this analogue precisely.

\begin{lem}\label{lem:posFracVolGrowFD}
The constant
\[
R_{\Ec} := \inf_{\substack{x \in X \\ n \geq 1}} \frac{\det(A^n_x|_{\Ec(x)})}{\prod_{i = 1}^k \sigma_i (A^n_x)}
\]
is nonzero and obeys the lower bound
\begin{align}
\label{eq:rEc}
- \log R_{\Ec} \leq 2 k \log \kappa \cdot \bigg \lceil \frac{\log (12 \kappa^3 K)  -  \log (1 - \tau) }{- \log \tau} \bigg \rceil + 2 k (1 - \tau)^{-1}
\end{align}
\end{lem}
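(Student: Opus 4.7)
The plan is to follow the proof of Proposition \ref{prop:distEst}, sharpening each estimate using orthogonality of the SVD and replacing Lemma \ref{lem:detReg} with its explicit Hilbert-space analogue, Lemma \ref{lem:lipRegFD}. Since $\det(A^n_x|_{E_n(x)}) = V_k(A^n_x) = \prod_{i=1}^k \sigma_i(A^n_x)$ by the classical SVD, it suffices to bound below, uniformly in $n \geq 1$ and $x \in X$, the telescoping sum
\[
\log \frac{\det(A^n_x|_{\Ec(x)})}{\det(A^n_x|_{E_n(x)})} = \sum_{q=0}^{n-1} \log \frac{\det(A_{T^q x}|_{\Ec(T^q x)})}{\det(A_{T^q x}|_{G_q})}, \qquad G_q := A^q_x E_n(x).
\]

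The first task is to establish a Cauchy estimate for the bottom right singular subspaces $F_n(x) := E_n(x)^{\perp}$. For a unit vector $v \in F_n(x)$, combining the sub-multiplicativity bound $\sigma_k(A_x^{n+1}) \geq \sigma_k(A_x^n)/\|A_{T^n x}^{-1}\|$, the inequality $\|A_x^{n+1} v\| \leq \|A_{T^n x}\| \sigma_{k+1}(A_x^n)$ (which holds because $v \in F_n(x)$), and the hypothesis $\sigma_{k+1}/\sigma_k \leq K\tau^n$ yields $d_H(F_n(x), F_{n+1}(x)) \leq \kappa K \tau^n$. Telescoping gives $d_H(E_q(x), E_n(x)) \leq \kappa K \tau^q/(1-\tau)$ for all $n \geq q$, which will be used to compare the ``moving'' subspace $E_n(x)$ to its intermediate $E_q(x)$.

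Next I would bound $d_H(\Ec(T^q x), G_q)$ by the triangle inequality along $E'_q(x) := A^q_x E_q(x)$. The first summand $d_H(\Ec(T^q x), E'_q(x))$ is at most $\kappa K \tau^q/(1-\tau)$ directly from \eqref{eq:EconvFD} (with base point $T^q x$ and index $q$). For the second summand $d_H(E'_q(x), G_q) = d_H(A^q_x E_q(x), A^q_x E_n(x))$, I decompose any unit $u \in E_n(x)$ orthogonally as $u = a + b$ with $a \in E_q(x), b \in F_q(x)$, bounding $\|b\| \leq \kappa K \tau^q/(1-\tau)$ via the Cauchy estimate. Since $A^q_x$ sends the orthogonal pair $E_q(x), F_q(x)$ to the orthogonal pair $E'_q(x), F'_q(x) = E'_q(x)^{\perp}$, the decomposition $A^q_x u = A^q_x a + A^q_x b$ is orthogonal, and for $q$ large enough that $\|b\| \leq 1/2$, one obtains $\|A^q_x u\| \geq \sigma_k(A^q_x)/2$ and hence $d_H(E'_q(x), G_q) \leq 2\sigma_{k+1}(A^q_x)\|b\|/\sigma_k(A^q_x) \leq 2\kappa K^2 \tau^{2q}/(1-\tau)$, a strictly lower-order term.

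The final step is to split the telescoping sum at the threshold $q_0 := \lceil(\log(12 \kappa^3 K) - \log(1-\tau))/(-\log \tau)\rceil$, chosen so that $d_H(\Ec(T^q x), G_q) \leq (2\kappa)^{-2}$ whenever $q \geq q_0$. For such $q$, Lemma \ref{lem:lipRegFD} applied to $A = A_{T^q x}$ (condition number $\leq \kappa$) bounds each term of the sum by $36 k \kappa^2 d_H(\Ec(T^q x), G_q)$, and the geometric tail sums to $O(k/(1-\tau))$, contributing the $2k(1-\tau)^{-1}$ term in \eqref{eq:rEc} (with the constant $12$ inside $q_0$ tuned precisely for this purpose). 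For the initial terms $q < q_0$, the crude bound $|\log \det(A_{T^q x}|W)| \leq k \log \kappa$ valid for any $k$-dimensional subspace $W$ gives a total of at most $2 k q_0 \log \kappa$, matching the first term of \eqref{eq:rEc}. The main obstacle is the orthogonality-based estimate in the previous paragraph: without exploiting the Hilbert-space structure, bounding $d_H(E'_q(x), G_q)$ at the enhanced rate $O(\tau^{2q})$ rather than $O(\tau^q)$ would be difficult, and it is this second-order decay that allows the threshold argument to close with the stated constants.
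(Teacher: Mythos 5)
Your argument follows the paper's proof of this lemma essentially step for step: the same telescoping factorization over $q$, the same Cauchy estimate on the orthogonal complements $E_n(x)^{\perp}$ (this is exactly how the paper adapts Claim~\ref{cla:cheatComp}), the same triangle inequality through $E_q'(x)$ using \eqref{eq:EconvFD}, the same threshold/tail split with Lemma~\ref{lem:lipRegFD} on the tail and the crude bound $|\log\det(A_{T^qx}|W)|\leq k\log\kappa$ on the initial block. The one place you genuinely deviate is the second summand $d_H(A^q_xE_q(x),A^q_xE_n(x))$: you exploit orthogonality to bound it by $2\kappa K^2\tau^{2q}/(1-\tau)$, whereas the paper settles for the first-order bound $2K\tau^q$ (it estimates $\|A^q_x\Pr_{E_q^\perp(x)}v\|\leq\sigma_{k+1}(A^q_x)\|v\|$ rather than $\sigma_{k+1}(A^q_x)\|f_q\|$). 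Your sharpening is correct, but your closing claim that the $O(\tau^{2q})$ decay is \emph{needed} to close the threshold argument is not: the paper closes with the first-order bound, at the cost only of slightly worse constants.

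There is, however, a shortfall in the constant-tracking you assert. With your threshold $q_0$, i.e.\ $\tau^{q_0}\leq(1-\tau)/(12\kappa^3K)$, and the Lipschitz constant $36k\kappa^2$ from Lemma~\ref{lem:lipRegFD}, the leading part of the tail is
\[
\sum_{q\geq q_0}36k\kappa^2\cdot\frac{\kappa K\tau^q}{1-\tau}\;=\;\frac{36k\kappa^3K\,\tau^{q_0}}{(1-\tau)^2}\;\leq\;\frac{3k}{1-\tau},
\]
plus a harmless second-order contribution of size at most $k/2$; this exceeds the advertised $2k(1-\tau)^{-1}$ in \eqref{eq:rEc}, so the constant $12$ is \emph{not} in fact tuned to deliver that term (you would need roughly $18$ inside the logarithm, or a sharper Lipschitz constant). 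You should be aware that the source shares this blemish: the paper's own proof chooses $Q$ via \eqref{eq:defineQ} with the constant $3$ and ends with the bound $2k\log\kappa\cdot\lceil(\log(3\kappa^3K)-\log(1-\tau))/(-\log\tau)\rceil+36k/(1-\tau)$, which is what Theorem~\ref{thm:finiteDimensional} quotes, and which neither implies nor is implied by the pair $(12,\,2k(1-\tau)^{-1})$ appearing in the lemma statement. So your proof establishes the qualitative statement and a bound of the same form, but, as written, neither your constants nor an honest bookkeeping of your estimates reproduces \eqref{eq:rEc} verbatim; state and prove a consistent pair of constants (e.g.\ the paper's $(3,\,36k/(1-\tau))$, or rerun your sharper second-summand estimate with threshold $18$) rather than asserting that $12$ closes the gap.
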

Postponing a proof of the estimate \eqref{eq:rEc} for the moment, let us complete the proof of Theorem \ref{thm:finiteDimensional}. We first obtain the lower subspace $\Fc$ in a way completely analogous to the methods of \S \ref{subsec:lowerSubspace}: for each $x \in X, n \in \N$, we let $\hat F_n(x)$ denote the complement to $\Ec(x)$ guaranteed in the conclusion of Proposition \ref{prop:genSVDHilbert}. For these, we have the estimates
\[
\|\pi_{\Ec(x) \ds \hat F_n(x)} \| \leq R_{\Ec} \, ,  \quad m(A^n_x|_{\Ec(x)}) \geq R_{\Ec} \sigma_k(A^n_x) \quad \text{ and } \|A^n_x|_{\hat F_n(x)} \| \leq R_{\Ec}^{-1} \sigma_{k + 1}(A^n_x) \, ,
\]
all of which follow from the conclusions of Proposition \ref{prop:genSVDHilbert}.

Following the arguments in Lemma \ref{lem:lowerSubspace}, the lower subspace is realized as the limit $\Fc(x) = \lim_{n \to \infty} \hat F_n(x)$; the estimate on $\|\pi_{\Ec(x) \ds \hat F_n (x)}\|$ passes to an identical estimate on $\|\pi_{\Ec(x) \ds \Fc(x)}\|$. It remains, then, to estimate $\|A^n_x|_{\Fc(x)}\|$, which again we do by the exact same method: for $f \in \Fc(x), \|f\| = 1$ we bound
\[
V_{k + 1}(A^n_x) \geq \det(A^n_x| \Ec(x) \oplus f) \geq R_{\Ec} \det(A^n_x | \Ec(x)) \cdot \|A^n_x f\| \geq R_{\Ec}^2 V_k(A^n_x) \|A^n_x f \| \, ,
\]
hence $\|A^n_x|_{\Fc(x)}\| \leq R_{\Ec}^{-2} \sigma_{k + 1}(A^n_x)$, as desired.

\medskip

To finish, we sketch a proof of \eqref{eq:rEc}.

\begin{proof}[Proof of Lemma \ref{lem:posFracVolGrowFD}] 
Let $n$ be large enough so that $K \tau^n < 1$, hence $\sigma_{k + 1}(A^n_x) < \sigma_k(A^n_x)$ and so the corresponding $k$-dimensional subspace $E_n(x) \subset \R^d$ is well-defined. Thus we have
\[
{ \det(A^n_x | \Ec( x)) \over \prod_{i = 1}^k \sigma_i(A^n_x) } = {\det(A^n_x | \Ec( x)) \over \det(A^n_x | E_n( x)) } = 
\prod_{q = 0}^{n-1} { \det(A_{T^{q } x} | \Ec(T^{q } x) )  \over \det(A_{T^{q } x} | A^q_{ x} E_n( x) )  } \, .
\]
The task at hand is to repeat the proof of Proposition \ref{prop:distEst} while keeping track of the estimates made along the way. Like before, the idea is to estimate separately the tail $q \geq Q$ and the initial segment $q < Q$, where $Q$ is chosen sufficiently large so that $A^q_{ x} E_n( x) \approx A^q_{ x} E_q( x) \approx \Ec(T^q x)$ for all $n > q \geq Q$. The value of $Q$ will be specified later on; for the moment we assume that $Q$ is large enough so that $K \tau^Q < 1$ and that $n > q \geq Q$, so that in particular $E_q(x)$ is well-defined.

We start by collecting analogues of the estimates on $d_H(A^q_x E_q(x), \Ec(T^q x))$ and $d_H(A^q_x E_n(x), A^q_x E_q(x))$. The first of these comes from \eqref{eq:EconvFD}. For the second, we repeat the estimates in Claim \ref{cla:cheatComp}, obtaining for $q < n$ that
\[
d_H(E_n^{\perp}(x), E_q^{\perp}(x)) \leq \kappa K \frac{\tau^q}{1 - \tau} \, .
\]
With this estimate, we now estimate $d_H(A^q_x E_n(x), A^q_x E_q(x))$: let $v' \in A^q_x E_n(x)$ be a unit vector, and let $v' = A^q_x v$ where $v \in E_n(x)$. Writing $v = e_q + f_q$ according to the splitting $\R^d = E_q(x) \oplus E_q^\perp(x)$, note that
\[
\|f_q\| \leq \|\Pr_{E_q^\perp(x)}|_{E_n(x)}\| \|v\| \leq \bigg( \kappa K \frac{\tau^q}{1 - \tau} \bigg) \|v\| \, .
\]
Take $Q$ large enough so that the parenthetical term is $\leq 1/\sqrt 2$ for $q \geq Q$. Now we bound $\|e_q\|^2 = \|v\|^2 - \|f_q\|^2 \geq \frac12 \|v\|^2$, and obtain the bound
\[
1 = \|v'\| = \|A^q_x v\| \geq \|A^q_x e_q\| - \|A^q_x f_q\| \geq \frac{1}{\sqrt 2} \sigma_k(A^q_x) (1 - K \tau^q) \|v\| \, .
\]
Taking $Q$ larger so that $1 - K \tau^Q \geq \frac12$,
\[
\| \Pr_{A_x^q E_q^\perp(x)} v' \| = \| A^q_x \circ \Pr_{E_q^\perp(x)} v \| \leq \sigma_{k + 1}(A^q_x) \cdot \|v\| \leq \frac{K \tau^q}{1 - K \tau^q} \leq 2 K \tau^q \, .
\]
At last we collect our estimates, and obtaining
\[
d_H(\Ec(T^{q } x) , A^q_{ x} E_n( x) ) \leq \bigg( \frac{\kappa K}{1 - \tau} + 2 K \bigg)  \tau^q
\]
for $n > q \geq Q$.

The third and last condition we place on $Q$ is to ensure that the RHS above is $\leq (2 \kappa)^2$, so that $E_1 = \Ec(T^q x), E_2 = A^q_x E_n(x)$ obey the hypotheses of Lemma \ref{lem:lipRegFD}. All three conditions are met if $Q$ is chosen as
\begin{align}\label{eq:defineQ}
Q := \bigg\lceil \frac{ \log (3 K \kappa^3) - \log(1 - \tau) }{- \log \tau} \bigg\rceil \, ;
\end{align}
equivalently, $Q$ is the smallest nonnegative integer for which $\tau^Q \leq (1 - \tau) / 3 K\kappa^3$.

For $q \geq Q$, after applying Lemma \ref{lem:lipRegFD} we now have
\[
\bigg| \log { \det(A_{T^{q } x} | \Ec(T^{q } x) )  \over \det(A_{T^{q } x} | A^q_{ x} E_n( x) )  } \bigg| 
\leq
108 k \kappa^3 K \frac{\tau^q}{1 - \tau} \, .
\]
Thus we have the estimate
\[
\sum_{q = Q}^{n - 1} \bigg| \log { \det(A_{T^{q } x} | \Ec(T^{q } x) )  \over \det(A_{T^{q } x} | A^q_{ x} E_n( x) )  } \bigg| \leq 108 k \kappa^3 K \frac{\tau^Q}{ (1 - \tau)^2} \leq \frac{36 k}{1 - \tau} \, .
\]
for the tail of the product. For the remaining terms we use the simple estimate $|\log \det(A | E)| \leq k \log \kappa(A)$, where $\dim E = k$ and $\kappa(A) = \|A\|\|A^{-1}\|$ is the condition number of $A$. Collecting, we have

\begin{gather*}
\sum_{q = 0}^{n-1} \bigg| \log { \det(A_{T^{q } x} | \Ec(T^{q } x) )  \over \det(A_{T^{q } x} | A^q_{ x} E_n( x) )  } \bigg| \leq 2 k Q \log \kappa + \frac{36 k}{ 1 - \tau} \\
= 2 k \log \kappa \cdot \bigg \lceil \frac{\log (3 \kappa^3 K)  -  \log (1 - \tau) }{- \log \tau} \bigg \rceil + \frac{36 k}{ 1 - \tau} \, ,
\end{gather*}
completing the estimate.
\end{proof}

\section{Acknowledgments}

Ian Morris was supported by the Engineering and Physical Sciences Research Council (grant number EP/L026953/1). This research was initiated at the workshop ``Random Dynamical Systems and Multiplicative Ergodic Theorems'' held at the Banff International Research Station; the authors would like to thank the conference organizers for facilitating this interaction.

\bibliography{badosp}
\bibliographystyle{siam}

\end{document}